\documentclass[aos,preprint]{imsart} 

\RequirePackage[OT1]{fontenc}
\RequirePackage{amsthm,amssymb,amsfonts,amsmath}
\RequirePackage[numbers]{natbib}
\RequirePackage[colorlinks,citecolor=blue,urlcolor=blue, pdffitwindow=false, pdfstartview={FitH}]{hyperref}
\usepackage{color}
\usepackage{amsmath}
\usepackage{amssymb}
\usepackage{amsfonts}
\usepackage{amsthm}
\usepackage{rotating}
\usepackage{dsfont}
\usepackage{array}
\usepackage{multirow}
\usepackage{multicol}

\usepackage{natbib}

\usepackage{graphicx}

\usepackage{tabularx}
\usepackage{longtable}
\usepackage{lscape}
\usepackage{float}

 \usepackage{hyperref}
 \usepackage{tikz}
\usetikzlibrary{arrows}
 \setlength\parindent{0pt}

\newtheorem{theo}{Theorem}[section]
\newtheorem{cor}{Corollary}[section]
\newtheorem{lemma}{Lemma}[section]
\newtheorem{rem}{Remark}[section]
\newtheorem{ex}{Example}[section]
\newtheorem{prop}{Proposition}[section]
\newtheorem{defi}{Definition}[section]




\newcommand{\PT}{\ensuremath{\mathcal{P}_{\mathcal{T},Id}}}
\newcommand{\PTf}{\ensuremath{\mathcal{P}_{\mathcal{T},\psi}}}
\newcommand{\blabla}{\ensuremath{\mathfrak{M}_{mma}(\Omega,\kappa)}}
\newcommand{\bla}{{\widetilde{\mathfrak{M}}_{mma}(\Omega,\kappa)}}

\newcommand{\PP}{\ensuremath{\mathbb{P}} }
\newcommand{\RR}{\ensuremath{\mathbb{R}} }

\newcommand{\NN}{\ensuremath{\mathbb{N}} }

\newcommand{\EE}{\ensuremath{{\mathbb E}}}

\newcommand{\R}{{\cal R}}

\newcommand{\Be}{\mathcal{B}_{\epsilon}}

\newcommand{\Bej}{\mathcal{B}_{\epsilon,j}}







\arxiv{arXiv:submit/1106085}

\startlocaldefs
\numberwithin{equation}{section}
\theoremstyle{plain}

\endlocaldefs

\begin{document}

\begin{frontmatter}
\title{Classification with the nearest neighbor rule in general finite dimensional spaces}
\runtitle{Nearest neighbor rule on general finite dimension spaces}

\begin{aug}
\author{\fnms{S\'ebastien} \snm{Gadat}\ead[label=e1]{sebastien.gadat@math.univ-toulouse.fr}} \and
\author{\fnms{Thierry} \snm{Klein}\ead[label=e2]{thierry.klein@math.univ-toulouse.fr}} \and 
\author{\fnms{Cl\'ement} \snm{Marteau}\ead[label=e3]{clement.marteau@math.univ-toulouse.fr}}

\runauthor{S. Gadat, T. Klein, C. Marteau}

\affiliation{Toulouse School of Economics, Universit\'e Toulouse I Capitole}
\address{Toulouse School of Economics\\ 
 Universit\'e Toulouse 1 - Capitole.\\
21 all\'ee de Brienne, 31000 Toulouse, France.\\
\printead{e1}\\}

\affiliation{Institut Math\'ematiques de Toulouse, Universit\'e Paul Sabatier}
\address{Institut Math\'ematiques de Toulouse\\
 Universit\'e Toulouse 3 - Paul Sabatier\\118 route de Narbonne, 31400
 Toulouse, France.\\
\printead{e2}\\}

\address{Institut Math\'ematiques de Toulouse\\
Institut National des Sciences Appliqu\'ees\\ 135, avenue de Rangueil
31 077 Toulouse Cedex 4, France.\\
\printead{e3}\\}

\end{aug}
\begin{abstract}
Given an $n$-sample of random vectors $(X_i,Y_i)_{1 \leq i \leq n}$ whose joint law is unknown, the long-standing problem of supervised classification aims to \textit{optimally} predict the label $Y$ of a given a new observation $X$. In this context, the nearest neighbor rule is a popular flexible and intuitive method in non-parametric situations.
 Even if this algorithm is commonly used in the machine learning and statistics communities, less is known about its prediction ability in general finite dimensional spaces, especially when the support of the density of the observations is $\RR^d$. This paper is devoted to the study of the statistical properties of the nearest neighbor rule in various situations. In particular, attention is paid to the marginal law of $X$, as well as the smoothness and margin properties of the \textit{regression function} $\eta(X) = \mathbb{E}[Y \vert X]$. We identify two necessary and sufficient conditions to obtain uniform consistency rates of classification and to derive sharp estimates in the case of the nearest neighbor rule. Some numerical experiments are proposed at the end of the paper to help illustrate the discussion. 
\end{abstract}

\begin{keyword}[class=AMS]
\kwd[Primary ]{62G05}
\kwd[; secondary ]{62G20}
\end{keyword}

\begin{keyword}
\kwd{Supervised classification, nearest neighbor algorithm, plug in rules, minimax classification rates}
\end{keyword}

\end{frontmatter}

\section{Introduction}

The supervised classification model has been at the core of numerous contributions to statistical literature in recent years. It continues to  provide interesting problems, both from the theoretical and practical point of views.  The classical task in supervised classification   is to predict a feature $Y \in \mathcal{M}$ when a variable of interest $X \in \mathbb{R}^d$ is observed, the set $\mathcal{M}$ being finite. In this paper, we focus on the binary classification problem where $\mathcal{M}=\lbrace 0,1 \rbrace$.  \\

In order to provide a prediction of the label $Y$ of $X$, it is assumed that a training set $\mathcal{S}_n = \lbrace (X_1,Y_1), \dots , (X_n,Y_n) \rbrace$ is at our disposal, where $(X_i,Y_i)$ are i.i.d. and with a common law $\mathbb{P}_{X,Y}$. This training set $\mathcal{S}_n$ makes it possible to retrieve some information on the joint law of $(X,Y)$ and to provide, depending on some technical conditions, a pertinent prediction. In particular, the regression function $\eta$ defined as:
$$ \eta(x) = \mathbb{E}[ Y \vert X=x ], \, \forall x\in \mathbb{R}^d$$
appears to be of primary interest for the statistician (see Section \ref{s:model} for a formal description of the model). Indeed, given $x\in \mathbb{R}^d$, the term $\eta(x)$ provides the probability that $Y$ is assigned the label $1$, conditionally to the event $\lbrace X=x \rbrace$. Since this function is unknown in practice, prediction rules are based on the training sample $\mathcal{S}_n$.  \\

Several algorithms have been proposed over the years but we do not intend to provide an exhaustive list of the associated papers. For an extended introduction to the supervised classification theory, we refer to \cite{survey} or \cite{DGL}. Among available classification procedures, we can, roughly speaking, divide them into (at least) three families:
\begin{itemize}
\item \textit{Approaches based on pure entropy considerations and Empirical Risk Minimization (ERM):} 
Given a classifier, the miss-classification error can be empirically estimated from the learning sample. The ERM algorithm then selects the classifier that minimizes this empirical risk among a given family
of candidates.  Several studies such as in  \cite{MamTsy}, \cite{empimini}, \cite{AudTsy}, \cite{LM_2014} now provide  an almost complete description of their statistical performance. In an almost similar context, some aggregation schemes, first proposed in Boosting procedures by \cite{Freund}, have been analyzed in depth in \cite{Lecue} and shown to be adaptive to margin and complexity.
\item \textit{Methods derived from geometric interpretation or information theory:} For example, the Support Vector Machine classifier (SVM) aims to maximize the margin of the classification  rule. It has been intensively studied in the last two decades because of its  low computational cost and excellent statistical performances (see \cite{Vapnik}, \cite{Steinwart} or \cite{Massart} among others). 
Classification and Regression Tree is another intuitive standard method that relies on a recursive dyadic partition of the state space, introduced in \cite{Breiman} and greatly improved by an averaging procedure in \cite{Amit} and \cite{BreimanRF}, which is  usually referred to as Random Forest, and later theoretically developed in \cite{Biau}. 
\item \textit{Plug-in rules:} The main idea is to mimic the Bayes optimal classifier using a plug-in rule after a preliminary estimation of the function $\eta$. We refer to \cite{GKKW2002} for a general overview, and to \cite{Bickel_Ritov2003} and  \cite{AudTsy} for some recent statistical results within this framework. 
The main motivation behind plug-in rules is to transfer properties related to the classical regression problem (estimation of  $\eta$ from the sample $\mathcal{S}_n$) to a quantitative control of the miss-classification error. 
\end{itemize}

In this general overview, the nearest neighbor rule  (see Section \ref{sec:knndef} for a complete description) belongs to the last two classes. It corresponds to a plug-in classifier with a simple geometrical interpretation. It has attracted a great deal of attention for the past few decades, from the seminal works of \cite{Fix} and \cite{Covert}. Given an integer $k$, the corresponding classifier is based on a feature average of the $k$-closest observations of $X$ in the training set $\mathcal{S}_n$. 
 We  also refer to \cite{Stone}, \cite{Gyorfi78}, \cite{Gyorfi81}, \cite{Devroye} and \cite{DGKL} for seminal contributions on this prediction rule (both for classification and regression).  
 Recently, this algorithm has received even further attention in mathematical statistics, and is still at the core of several studies:  \cite{Cerou_Guyader} examines the situation of general metric space and identifies the importance of the so-called Besicovitch assumption,  \cite{Hall} is concerned with the influence of the integer $k$ on the excess risk of the nearest neighbor rule as well as the two notions of the sample structure while \cite{Samworth} describes an improvement of the standard algorithm. 
\\

Most of the results obtained for penalized ERM, SVM or plug-in classifiers are based on complexity considerations (metric entropy or Vapnik dimension). In this paper, we mainly use the asymptotic behavior of the small ball probabilities instead (see \cite{Lian} and the references therein), which can be seen as a dual quantity of the entropy (see \cite{LiShao}). We also deal with the more intricate situation of not bounded away from zero densities (especially for non compactly supported measures).
 For this purpose, we  work with both \textit{smoothness} and \textit{minimal mass} assumptions (see Section \ref{s:rates} for more details) that will provide a pertinent estimation of the function $\eta$. In particular, it is assumed that we will be able to take advantage of some smoothness properties of the function $\eta$ in order to improve the prediction of the label $Y$. According to previous existing studies (see, \textit{e.g.}, \cite{Gyorfi81}), the associated classification rates appear to be comparable to those obtained in an ``estimation" framework and, hence, always greater than $\sqrt{n}^{-1}$. However, it has been proven in \cite{MamTsy} that \textit{fast rates} (\textit{i.e.}, faster than $\sqrt{n}^{-1}$) can be obtained up to some additional \textit{margin} assumption. It is in fact possible to take advantage of the behavior of the law of $(X,Y)$  around the boundary $\lbrace \eta =1/2 \rbrace$ in order to improve the properties of the classification process.      \\

In this paper, we investigate the nearest neighbor rule with margin assumption and marginal distribution $\mu$ for the variable $X$ that is not necessarily compactly supported or lower bounded from zero. The contributions proposed below can be broken down into three different categories. 

\paragraph{Consistency rate for bounded from below densities} Our first result concerns the optimality of the nearest neighbor classifier $\Phi_n$ in the compact case. We prove that this classification rule reaches the minimax  rate of convergence for the excess risk obtained by \cite{AudTsy} (see Theorem \ref{theo:opt} below). In particular, under some classical assumptions about the distribution $F$ of the couple $(X,Y)$ (which will be illustrated below), we show that:
$$
\sup_{F\in\mathcal{F}} \left[ \mathcal{R}(\Phi_n)- \mathcal{R}(\Phi^{*}) \right] \leq C n^{-\frac{1+\alpha}{2+d}}, 
$$
where $\alpha$ denotes the margin parameter, $d$ the dimension of the problem, $\mathcal{R}(\Phi)$ the miss-classification error of a given classifier $\Phi$ and $\Phi^*$ the Bayes classifier.\footnote{This result has also been established in the recent work of \cite{Samworth}} We obtain this result for both Poisson and Binomial sample-size  models. In particular, such a result appears to be a generalization of the ones given in \citep{Hall} that do not take the margin $\alpha$  into account in their study.

\paragraph{Consistency rate for general densities} In a second step, we investigate the behavior of the nearest neighbor classifier when the marginal density $\mu$ (w.r.t. the Lebesgue measure) of $X$ is not bounded from below on its support. Such an improvement is not of secondary importance since it corresponds to the commonly encountered situation of vanishing or non-compactly supported densities.
 To do this, we use an additional assumption on the \textit{tail} of this distribution and prove that generically:
$$
\sup_{F\in\mathcal{F}} \left[ \mathcal{R}(\Phi_n)- \mathcal{R}(\Phi^{*}) \right] \leq C n^{-\frac{1+\alpha}{2+\alpha+d}},
$$
as soon as the bandwidth $k$ involved in the classifier is allowed to depend on the spatial position of $X$.  The \textit{tail} assumption on the marginal distribution on $X$ involved in this result, will describe the behavior of the density $\mu$ near the set $\{\mu = 0\}$.

\paragraph{Lower bounds} Finally, we derive some lower bounds for the supervised classification problem, which extends the results obtained in \cite{AudTsy} in a slightly different context. We prove that our Tail Assumption is unavoidable  to ensure uniform consistency rates for classification in a non-compact case, regardless of dimension $d$. We then see how these upper and lower bounds are linked. In particular, we show that a very unfavorable situation of classification occurs when the regression function $\eta$ oscillates in the tail of the distribution $\mu$, \textit{i.e.}, we establish that it is even impossible in these situations to obtain uniform consistency rates and thus elucidate two open questions in \cite{Cannings}.\\

\quad The paper is organized as follows. In Section \ref{s:model}, we precisely describe the statistical setting related to the classification problem. Some attention is paid to the nearest neighbor rule. Section \ref{s:compact} is devoted to the bounded from below case where we prove that the nearest neighbor classifier reaches the minimax rate of convergence for the excess risk under mild assumptions. We then extend our study to the general (typically non-compact) case in Section \ref{s:pascompact}.	This section is supplemented with some supporting numerical results and a glossary of typical situations of location models. We conclude with a discussion of our results, and potential problems.
Proofs and technical results are included in Appendix \ref{s:proofs}. The paper is completed by an adaptation to the smooth discriminant analysis model in Appendix \ref{s:sda} (see, \textit{e.g.} \cite{MamTsy} or \cite{Hall} for another comparison between the so-called Poisson and Binomial models). In particular, although the variables of interest are strongly dependent in this case, we derive (using a Poissonization argument) results similar to those obtained in the classical binary classification model. \\

We use the following notations throughout the paper. $\mathbb{P}_{X,Y}$ denotes the distribution of the couple $(X,Y)$ and $\mathbb{P}_X$ the marginal distribution of $X$, which will be assumed to admit a density $\mu$ with respect to the Lebesgue measure. Similarly, we set $\mathbb{P}_{\otimes^n}= \prod_{i=1}^n \mathbb{P}_{(X_i,Y_i)}$ and $\mathbb{P} = \mathbb{P}_{(X,Y)} \times \mathbb{P}_{\otimes^n}$.  In the same spirit, $\mathbb{E}[.]$, $\mathbb{E}_X[.]$ and $\mathbb{E}_{\otimes^n}[.]$ will hereafter correspond to the expectations w.r.t. the measures $\mathbb{P}$, $\mathbb{P}_X$ and $\mathbb{P}_{\otimes^n}$, respectively.  Finally, given two real sequences $(a_n)_{n\in \mathbb{N}}$ and $(b_n)_{n\in \mathbb{N}}$, we write $a_n \lesssim b_n$ (resp. $a_n \sim b_n$) 
is a real constant $C\geq 1$ exists such that $a_n \leq C b_n$ 
(resp. $\frac{b_n}{C} \leq a_n \leq C b_n$) for all $n\in \mathbb{N}$.


\section{Statistical setting and nearest neighbor classifier}
\label{s:model}

\subsection{Statistical Classification problem}
\label{s:setting}

In this paper, we study the classical binary supervised classification model (see, \textit{e.g.}, \cite{DGL} for a complete introduction). An i.i.d. sample $\mathcal{S}_n:=(X_i,Y_i)_{i=1...n}\in\Omega \times \{0,1\}$, whose distribution is $\mathbb{P}_{X,Y}$ and where $\Omega = \mathrm{Supp}(\mu)$ is an open set of $\RR^d$, is at our disposal.  Given a new incoming observation $X$, our goal is to predict its corresponding label $Y$. To do this, we use a classifier that provides a decision rule for this problem. Formally, a classifier is a measurable mapping  $\Phi$ from $\RR^{d}$ to $\{0,1\}$. Given a classifier $\Phi$, its corresponding miss-classification error is then defined as:
$$
\mathcal{R}(\Phi)=\PP\left(\Phi(X)\neq Y\right).
$$
In practice, the most interesting classifiers are those associated with the smallest possible error. In this context it is well known (see, \textit{e.g.}, \cite{survey}) that the Bayes classifier $\Phi^{*}$ defined as:
\begin{equation}
\Phi^{*}(X)=\mathbf{1}_{\left\lbrace \eta(X)>\frac12 \right\rbrace},\ \mathrm{where\ } \eta(x):=\EE\left[Y|X=x\right] \ \forall x\in \Omega,
\label{eq:Bayes_classic}
\end{equation}
minimizes the miss-classification error, \textit{i.e.},
$$
\mathcal{R}(\Phi^{*})\leq \mathcal{R}(\Phi),\ \forall \Phi:\RR^{d}\longrightarrow\{0,1\}.
$$
The classifier $\Phi^\star$ provides the best decision rule in the sense that it leads to the lowest possible miss-classification error.  Unfortunately, $\Phi^{*}$ is not available since the regression function $\eta$ explicitly depends on the underlying distribution of $(X,Y)$. In some sense, the Bayes classifier can be considered as an \textit{oracle} that provides a benchmark error. Hence, the main challenge in this supervised classification setting is to construct a classifier $\Phi$ whose miss-classification error will be as close as possible to the smallest possible one. In particular, the excess risk (also referred to as the \textit{regret}) defined as 
$$
\mathcal{R}(\Phi)- \mathcal{R}(\Phi^{*}),
$$
appears to be of primary importance. We are interested here in the statistical properties of the nearest neighbor  classifier (see Section \ref{sec:knndef} below for more details) based on the sample $\mathcal{S}_n$. In particular, we investigate the asymptotic properties of the excess risk through the \textbf{minimax} paradigm. Given a set $\mathcal{F}$ of possible distributions $F$ for $(X,Y)$, the minimax risk is defined as:
$$ \delta_n(\mathcal{F}) := \inf_{ \Phi}\sup_{F\in\mathcal{F}} \left[ \mathcal{R}(\Phi)- \mathcal{R}(\Phi^{*})\right],$$
where the infimum in the above formula is taken over all $\mathcal{S}_n$ measurable classifiers. A classifier $\Phi_n$ is then said to be minimax over the set $\mathcal{F}$ if:
$$
\sup_{F\in\mathcal{F}} \left[ \mathcal{R}(\Phi_{n})- \mathcal{R}(\Phi^{*}) \right] \leq C \delta_n(\mathcal{F}),
$$
for some constant $C$. The considered set $\mathcal{F}$ will be detailed  later on and will depend on the behavior of $(\mu,\eta)$ over $\RR^{d}$ through some smoothness, margin and minimal mass hypotheses.

\subsection{The nearest neighbor rule \label{sec:knndef}}
In this paper, we focus on the nearest neighbor classifier, which is perhaps one of the most widespread and simplest classification procedures.  Suppose that the  state space  is $(\RR^d,\|.\|)$ where $\|.\|$ is a reference distance. Given any sample $\mathcal{S}_n$ and for any $x\in \RR^{d}$, we first build the reordered   sample $\left(X_{(j)}(x),Y_{(j)}(x)\right)_{1\leq j\leq n}$ with respect to the distances $\|X_{i}-x\|$, namely:
$$
\|X_{(1)}(x) - x\| \leq \|X_{(2)}(x) - x\| \leq  \ldots \leq \|X_{(n)}(x) - x\|.
$$
In this context $X_{(m)}(x)$ is the $m$-nearest neighbor of $x$ w.r.t. the distance $\| . \|$ and  $Y_{(m)}(x)$ its corresponding label. Given  any integer $k$ in $\NN$, the principle of the nearest neighbor algorithm is to construct a decision rule based on the $k$-nearest neighbor of the input $X$: the $\mathcal{S}_n$-measurable classifier $\Phi_{n,k}$ is:
\begin{equation}\label{eq:kNN}
\Phi_{n,k}(X)=\left\{\begin{matrix}
1& \mathrm{if} & \displaystyle\frac1k\sum_{j=1}^{k}Y_{(j)}(X)>\frac12,\\
0& \mathrm{otherwise}.
\end{matrix}\right.
\end{equation}
For all $x\in \Omega$, the term $ \frac1k\sum_{j=1}^{k}Y_{(j)}(x)$ appears to be an estimator of the regression function $\eta(x)$. In particular, we can write the classifier $\Phi_{n,k}$ as 
\begin{equation}
\Phi_{n,k}(X) = \mathbf{1}_{\left\lbrace \hat \eta_n(X)>1/2 \right\rbrace} \quad \quad \mathrm{where} \quad \quad \hat\eta_n(x) = \displaystyle\frac1k\sum_{j=1}^{k}Y_{(j)}(x) \quad \forall x\in \Omega.
\label{eq:def_eta_n}
\end{equation}
Hence, the nearest neighbor procedure can be considered as a plug-in classifier, \textit{i.e.}, a preliminary estimator of the regression function $\eta$ is plugged in our decision rule. It is worth noting that the integer $k$ is a \textit{regularization parameter}. Indeed, if $k$ is too small, the classifier $\Phi_{n,k}$ will only use a small amount of the neighbors of $X$, leading to a large variance during the classification process. On the other hand, large values of $k$ will introduce some bias into the decision rule since we use observations that may be far away from the input $X$. In other words,  the statistical performances of $\Phi_{n,k}$  will depend on a careful choice of the integer $k$. In particular, the number of neighbors $k=k_n$ considered should  carefully grow to $+\infty$ with respect to $n$. 
\\

For this purpose, we introduce some baselines assumptions into the following section that will make it possible to characterize an optimal value for this regularization parameter. 


\subsection{Baseline assumptions}
\label{s:rates}

It is well known that no reliable prediction can be made in a distribution-free setting (see \cite{DGL}). We restrict the class of possible distributions of $(X,Y)$ below.\\

Since the nearest neighbor rule is a plug-in classification rule, we expect to take advantage of some smoothness properties of $\eta$ in order to improve the classification process. In fact, when $\eta$ is smooth, the respective values of $\eta(x_1)$ and $\eta(x_2)$ are comparable for close enough $x_1,x_2$. In other words, we can infer the sign of $\eta(x)-\frac{1}{2}$ from those of the neighbors of $x$.  

\paragraph{Assumption \textbf{A1}}(Smoothness) \textit{The regression function $\eta$ belongs to the H\"older class of parameter $1$ with a \textit{radius} $L$, which is denoted $\mathcal{C}^{1,0}(\Omega,L)$ and corresponds to the set of functions such that
 $$
 \forall (x_1,x_2) \in \Omega^2 \qquad |\eta(x_1) - \eta(x_2)| \leq L |x_1-x_2|.
 $$
}

\begin{rem} 
\textrm{It would be tempting to consider some more general smoothness classes for the regression function $\eta$. Nevertheless, the standard nearest neighbor algorithm does not make it possible to use smoothness indexes greater than $1$. An alternative procedure has been proposed in \cite{Samworth}: the idea is then to balance the $(Y_{(j)})_{j=1..k}$ with  a suitable monotonous weighting sequence. However, this modification complicates the statistical analysis and may alter the ideas developed below. We therefore chose to fix the smoothness of $\eta$ to $1$ (\textit{i.e.} restrict our study to $\mathcal{C}^{1,0}(\Omega,L)$).}
\end{rem} 

Our second assumption  was introduced by \cite{Tsybakov_tout_seul} in the binary supervised classification model (see \cite{MamTsy} in a \textit{smooth discriminant analysis} setting).

\paragraph{Assumption \textbf{A2}} (Margin assumption) \textit{For any $\alpha>0$, a constant $C>0$ exists such that:
$$ 
\mathbb{P}_X\left(0<\left|\eta(X)-\frac12\right|<\epsilon\right)\leq C\epsilon^{\alpha}, \quad  \forall \epsilon >0.
$$
In such a case, we write $(\mu,\eta) \in \mathbf{M}_{\alpha}$}.\\

The Bayes classifier depends on the sign of  $\eta(X)-1/2$. Intuitively, it would be easier to mimic the behavior of this classifier when the mass around the set $\lbrace \eta = 1/2 \rbrace$ is small. On the other hand, the decision process may be more complicated when $\eta(X)$ is close to $1/2$ with a large probability. Quantifying this closeness is the purpose of this margin assumption. \\

For the sake of convenience, we use the set $\mathcal{F}_{L,\alpha}$ throughout the paper, which contains distributions that satisfy both Assumptions \textbf{A1} and \textbf{A2}, namely:
\begin{multline*}
\mathcal{F}_{L,\alpha} : = \left\{ \mathbb{P}_{(X,Y)} \, : \,  \mathbb{P}_{X}(dx) =  \mu(x)dx \, \text{and} \, \mathcal{L}(Y \vert X) \sim \mathcal{B}(\eta(X)) \right. \\
\left. \text{with} \, \eta \in \mathcal{C}^{1,0}(\Omega,L) \, \text{and} \,  (\mu,\eta) \in \mathbf{M}_{\alpha} \right\}
\end{multline*}

We now turn to our last assumption that involves  the marginal distribution of the variable $X$.

\subsection{Minimal Mass Assumption}

In the sequel, this type of hypothesis will play a very important role. 

\paragraph{Assumption \textbf{A3}}(Strong Minimal Mass Assumption) \textit{There exists $\kappa>0$ such that the marginal density $\mu$ of $X$ satisfies $\mu \in \blabla$ where}

\begin{multline*}
\blabla := \bigg\{  \mathbb{P}_{X} \, : \,  \mathbb{P}_{X}(dx) =  \mu(x)dx \, \vert \\
 \,  \exists \delta_0> 0,  \, \forall \delta \leq \delta_0, \,
  \forall x \in \Omega: \mathbb{P}_X(X \in B(x,\delta)) \geq \kappa \mu(x) \delta^d  \bigg\}.
\end{multline*}

This assumption guarantees that $\mathbb{P}_X$ possesses a minimal amount of mass on each ball $B(x,\delta)$, this lower bound being balanced by the level of the density on $x$.  In some sense, distributions in $\blabla$ will make it possible to obtain reliable predictions of the regression function $\eta$ according to its Lipschitz property. The Strong Minimal Mass Assumption \textbf{A3} may be seen as a refinement of the so-called Besicovitch assumption that is quite popular in the statistical literature (see, \textit{e.g.}, \cite{Dev81} for a version of the Besicovitch assumption used for pointwise consistency or \cite{Cerou_Guyader} for a general discussion on this hypothesis in finite or infinite dimension). It is worth pointing out that the Besicovitch assumption introduced in \cite{Cerou_Guyader} states that $\eta$ satisfies the following $\mu$-continuity property:

\begin{equation}
\forall \epsilon >0 \quad \lim_{\delta \to 0} \mathbb{P}_X \left\{ x : \frac{1}{\mu(B(x,\delta))} \int_{B(x,\delta)} |\eta(z)-\eta(x)| d \mu(z) > \epsilon \right\}=0
\label{eq:Besi_Guyader}
\end{equation}
In our setting, since $\eta$ is $L$-Lipschitz (Assumption \textbf{A1}), we can check that for all $x\in \Omega$
$$
\int_{B(x,\delta)} |\eta(z)-\eta(x)| \mu(z)dz  \leq L \int_{B(x,\delta)} |x-z| \mu(z)dz \leq L\delta \mu(B(x,\delta)),$$
which implies that the right hand side of (\ref{eq:Besi_Guyader}) vanishes as soon as $\delta \leq \epsilon/L$. We will see that Assumption \textbf{A3} is necessary to obtain quantitative estimates for any finite dimensional classification problem in a general setting.


In a slightly different framework, our Assumption \textbf{A3} is similar to the \textit{Strong Density Assumption}
 used in the paper of \cite{AudTsy} when the density $\mu$ is lower bounded on its (compact) support, which is assumed to possess some geometrical properties ($(c_0,r_0)$ regularity). This setting is at the core of the study presented in Section \ref{s:compact} below. Assumption \textbf{A3} also recalls the notion of standard sets used in \cite{casal} for the estimation of compact support sets. More generally, the following examples present some standard distributions that satisfy Assumption \textbf{A3}.\\

\begin{ex}
$ $ 
\begin{itemize}
\item  In $\mathbb{R}^d$, it is not difficult to check that Gaussian measures with non-degenerated covariance matrices satisfy 
$\blabla$. As a simple example, consider a standard Gaussian law $\mu \sim \mathcal{N}(0,1)$. For any $x \in \RR$ and $\delta>0$, if $x$ belongs to a compact set $K$, then a constant $C_K$ exists such that
$(2 \pi)^{-1/2} \int_{x-\delta}^{x+\delta} e^{-t^2/2} dt \geq C_K e^{-x^2/2} \delta$. Now, if $x\longrightarrow + \infty$, we can check that:
$$
(2 \pi)^{-1/2}  \int_{x-\delta}^{x+\delta} (2 \pi)^{-1/2}  e^{-t^2/2} dt  \sim (2 \pi)^{-1/2}   e^{-x^2/2}  \left[ \frac{e^{x \delta }}{x-\delta} - \frac{e^{-x \delta }}{x+\delta}   \right] e^{-\delta^2/2}.
$$
The bracket above is always greater than $\delta$ when $(x \delta)^{-1} = O(1)$. Now, if $\delta = o(1/x)$, a simple Taylor expansion yields 
$$(2 \pi)^{-1/2}  \int_{x-\delta}^{x+\delta} (2 \pi)^{-1/2}  e^{-t^2/2} dt  \sim \mu(x) \frac{1+2 x \delta}{x} \gtrsim
\mu(x) \delta. $$
\item The same computations are still possible for symmetric Laplace distributions ($e^{t} \int_{t-\delta}^{t+\delta} e^{-x} dx =[ e^{\delta} - e^{-\delta}] \sim 2 \delta$ when $\delta$ is small. Thus, any Laplace distribution belongs to $\blabla$. In a same way, when $\mu$ is a standard Cauchy distribution, we can check that:
\begin{eqnarray*}
\int_{x-\delta}^{x+\delta} \frac{dt}{1+t^2}& =& \frac{1}{1+x^2} \int_{\delta}^{\delta} \frac{1}{1+h \frac{2x+h}{1+x^2}} dh \\
& \sim& \frac{1}{1+x^2} \left[ 2 \delta - \frac{2}{3} \frac{\delta^3}{1+x^2} + +8\frac{\delta^3x^2}{(1+x^2)^2}o(\delta^3)\right] \\
& \gtrsim &\frac{\delta}{1+x^2}
\end{eqnarray*}
\end{itemize}
\end{ex}

Typically, distributions that do not satisfy the Strong Minimal Assumption (A3) possess some important oscillations in their tails (when the density $\mu$ is close to $0$). In such a setting, the alternative set $\bla$, defined as follows, may be considered:
\begin{multline*}
\bla := \bigg\{  \mathbb{P}_{X} \, : \,  \mathbb{P}_{X} (dx) = \mu(x) dx \vert \, \exists (\rho,C) \in ]0;+\infty[^2  \\ \exists \delta_0> 0,  \, \forall \delta \leq \delta_0:
  \forall x \in \Omega: \mu(x) \geq e^{-C \delta^{-\rho}} \, \Longrightarrow  \,  \mathbb{P}_{X} (B(x,\delta)) \geq \kappa \mu(x) \delta^d \bigg\}.
\end{multline*}
The interest of the weaker $\bla$ compared to $\blabla$ is that the statistical abilities of the nearest neighbor rule are still the same with $\blabla$ or $\bla$.
Moreover, an analytic criterion that ensures  $\bla$ can be found (see Proposition \ref{prop:sufficient}. This is not the case for the \textit{uniform} assumption $\blabla$ (it is indeed more difficult to ensure the lower bound on the global set $\Omega$).


\vspace{0.5cm} 
Although all the subsequent results may be established for a weaker version of the minimal mass assumption (based on the set $\bla$), we will restrict ourselves to its strong formulation (Assumption \textbf{A3}). In Section \ref{s:compact}, we prove that the nearest neighbor rule is optimal in the minimax sense provided that the margin and smoothness assumptions hold, with a marginal density of the variable $X$ bounded away from $0$ and a suitable choice of $k$. In Section \ref{s:pascompact}, we will see that $\blabla$ is not yet sufficient to derive consistent classifiers for non compactly supported densities, and a last additional hypothesis is needed.


\section{Bounded away from zero densities}
\label{s:compact}

\subsection{Minimax consistency of the nearest neighbor rule}

In this section, we are interested in the special case of a marginal density $\mu$ bounded from below by a strictly positive constant $\mu_{-}$. In this context, we can state an upper bound on the consistency rate of the nearest neighbor rule. 

\begin{theo}
\label{theo:Main1} 
Assume that Assumptions \textbf{A1-A3} hold. The nearest neighbor classifier $\Phi_{n,k_n}$ with $k_n =  \lfloor n^{\frac{2}{2+d}}\rfloor $ satisfies
$$
\sup_{\mathbb{P}_{X,Y} \in\mathcal{F}_{L,\alpha} \cap\blabla^{\mu_-}} \left[ \mathcal{R}(\Phi_{n,k_n})- \mathcal{R}(\Phi^{*}) \right] \lesssim  n^{-\frac{1+\alpha}{2+d}},
$$
where $\blabla^{\mu_-}$ denotes the subset of densities of  $\blabla$ that are bounded from below by $\mu_{-}$.
\end{theo}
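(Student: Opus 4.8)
The plan is to follow the classical plug-in route: bound the excess risk by an integral of a local deviation probability for $\hat\eta_n$, then control that deviation using a bias/variance decomposition for the $k$-nearest neighbor estimate. The starting point is the well-known inequality (going back to the margin literature, e.g. \cite{AudTsy}) that for any plug-in classifier $\Phi_{n,k} = \mathbf{1}_{\{\hat\eta_n > 1/2\}}$,
\[
\mathcal{R}(\Phi_{n,k}) - \mathcal{R}(\Phi^*) \;\leq\; 2\,\EE_{\otimes^n}\,\EE_X\!\left[\,\left|\eta(X)-\tfrac12\right|\,\mathbf{1}_{\{\Phi_{n,k}(X)\neq \Phi^*(X)\}}\right].
\]
On the event of misclassification at $x$ one has $|\hat\eta_n(x)-\eta(x)| \geq |\eta(x)-1/2|$. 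Splitting the domain of $X$ according to whether $|\eta(x)-1/2|$ is smaller or larger than a threshold $t_n$, and using the margin assumption \textbf{A2} to handle the small region (contributing $\lesssim t_n^{1+\alpha}$), I reduce matters to showing that
\[
\sup_{x\in\Omega}\; \PP_{\otimes^n}\!\left(\left|\hat\eta_n(x)-\eta(x)\right| > t\right) \;\text{ is suitably small for } t \gtrsim t_n,
\]
uniformly over $\mathbb{P}_{X,Y}\in\mathcal{F}_{L,\alpha}\cap\blabla^{\mu_-}$, and then integrating this tail bound against the margin tail to recover the claimed rate with $t_n \sim n^{-1/(2+d)}$.

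The core step is the pointwise control of $\hat\eta_n(x) = \frac1k\sum_{j=1}^k Y_{(j)}(x)$. I would condition on the feature sample $X_1,\dots,X_n$, so that the labels $Y_{(1)}(x),\dots,Y_{(k)}(x)$ are independent Bernoulli variables with means $\eta(X_{(j)}(x))$. Writing $\hat\eta_n(x) - \eta(x) = \underbrace{\frac1k\sum_{j=1}^k\big(Y_{(j)}(x)-\eta(X_{(j)}(x))\big)}_{\text{stochastic term}} + \underbrace{\frac1k\sum_{j=1}^k\big(\eta(X_{(j)}(x))-\eta(x)\big)}_{\text{bias term}}$, the bias term is bounded by $L\,\max_{j\leq k}\|X_{(j)}(x)-x\| = L\,R_k(x)$, the distance to the $k$-th nearest neighbor. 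The stochastic term is a bounded martingale-difference/Hoeffding sum, so $\PP(|\cdot|>s\mid X_1,\dots,X_n) \leq 2e^{-2ks^2}$ conditionally, hence $\lesssim e^{-c k t^2}$ when $t\gtrsim t_n$ and $k = \lfloor n^{2/(2+d)}\rfloor$, provided the bias is also $\lesssim t_n$. Thus I need $R_k(x) \lesssim t_n = n^{-1/(2+d)}$ with overwhelming probability, uniformly in $x$.

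The main obstacle — and where Assumptions \textbf{A3} and $\mu\geq\mu_-$ enter decisively — is exactly this uniform control of the $k$-th nearest neighbor radius. The Strong Minimal Mass Assumption gives $\mathbb{P}_X(B(x,\delta)) \geq \kappa\mu(x)\delta^d \geq \kappa\mu_-\delta^d$ for all $\delta\leq\delta_0$ and all $x\in\Omega$, a uniform lower bound on ball mass. Choosing $\delta_n$ so that $n\,\kappa\mu_-\delta_n^d \sim 2k \sim 2n^{2/(2+d)}$, i.e. $\delta_n \sim (k/n)^{1/d} \sim n^{-1/(2+d)} \sim t_n$, a binomial/Chernoff bound on the number of sample points falling in $B(x,\delta_n)$ shows $\PP_{\otimes^n}(R_k(x) > \delta_n) \leq \PP_{\otimes^n}\big(\mathrm{Bin}(n,\kappa\mu_-\delta_n^d) < k\big) \leq e^{-c k}$, which is more than enough to absorb the event $\{R_k(x)>\delta_n\}$ into the error budget (on it one just uses the trivial bound $|\hat\eta_n-\eta|\leq 1$). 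Here the fact that the ball-mass lower bound holds \emph{for every} $x\in\Omega$ with the \emph{same} constant $\kappa\mu_-$ is what makes the estimate uniform over $\mathcal{F}_{L,\alpha}\cap\blabla^{\mu_-}$; without a lower bound on $\mu$ the radius $R_k(x)$ could blow up in the tails, which is precisely the obstruction motivating Section \ref{s:pascompact}. Assembling the pieces: the total excess risk is $\lesssim t_n^{1+\alpha} + \int_{t_n}^{1}(e^{-ckt^2}+e^{-ck})\,t^{\alpha}\,\mathrm{d}t \lesssim t_n^{1+\alpha} = n^{-(1+\alpha)/(2+d)}$, with the exponential terms negligible by the choice of $k$. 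Finally I would double-check that the Poisson/Binomial sample-size distinction mentioned in the paper does not affect this argument (it only changes $n$ by a concentrated random factor), and that ties in the ordering of distances are broken measurably, which is standard.
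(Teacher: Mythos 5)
Your proposal is correct and follows essentially the same route as the paper: the excess-risk identity for plug-in rules, the margin split plus peeling/integration over $\{|\eta-1/2|\}$, conditional Hoeffding for the label average, a Lipschitz bias bound through the $k_n$-nearest-neighbor radius, and control of that radius by a Chernoff-type bound on ball counts using Assumption \textbf{A3} together with $\mu\geq\mu_-$, with the same calibration $k_n\sim n^{2/(2+d)}$, $t_n\sim n^{-1/(2+d)}$. The only cosmetic difference is that you treat the bias as a random quantity on the high-probability event $\{R_{k_n}(x)\leq\delta_n\}$, whereas the paper bounds the expected bias $\Delta_n(x)$ (Propositions \ref{prop:use} and \ref{prop:Delta2}, the latter via a Bennett inequality); the two are interchangeable here.
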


Theorem \ref{theo:Main1} establishes a consistency rate of the nearest neighbor rule over $\mathcal{F}_{L,\alpha} \cap\blabla^{\mu_-}$. A detailed proof of is presented in Section \ref{preuve_dim_finie}. Implicitly, we restrict our analysis to compactly supported observations, this assumption being at the core of several statistical analyses (see, \textit{e.g.}, \cite{GKKW2002}, \cite{survey}, \cite{MamTsy} or \cite{Hall} among others). It is worth pointing out that this setting falls into the framework considered in \cite{AudTsy}. 


\begin{defi}[Strong Density Assumption (SDA), \cite{AudTsy}]
The marginal distribution of the variable $X$ satisfies the Strong Density Assumption if
\begin{itemize}
\item  it admits a density $\mu$ w.r.t. the Lebesgue measure of $\RR^d$,
\item  the density $\mu$ satisfies:
$$
 \mu_- \leq \mu(x) \leq \mu_+,\quad \forall x \in \mathrm{Supp}(\mu) 
$$
for some constants $(\mu_-,\mu_+) \in ]0,+\infty[^2$. 
\item The support of $\mu$ is \textit{$(c_0,r_{0})$-regular}, namely:
$$
\lambda \left[ \mathrm{Supp}(\mu) \cap B(x,r) \right] \geq c_0 \lambda [B(x,r)], \forall r\leq r_{0},
$$ 
for some positive constants $c_0$ and $r_0$.  
\end{itemize}
\end{defi}

As soon as the marginal density is bounded from below by a strictly positive constant, then both SDA  and Strong Minimal Mass Assumption (\textbf{A3}) are equivalent, as stated in the following proposition.

\begin{prop} 
\label{prop:equi} For bounded away from zero density, the SDA is equivalent to the Strong Minimal Mass Assumption.
\end{prop}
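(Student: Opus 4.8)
The plan is to establish the two implications separately. The one non-obvious point is that the hypothesis ``$\mu$ bounded away from zero'' is exactly what allows one to translate between the pointwise mass bound of Assumption \textbf{A3} and the Lebesgue-volume regularity of the support that appears in the SDA; I will exploit this in both directions. Throughout, write $v_d := \lambda(B(0,1))$, so $\lambda(B(x,\delta)) = v_d \delta^d$, and recall that $\Omega = \mathrm{Supp}(\mu)$ is the set carrying all the mass of $\mathbb{P}_X$.

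\emph{SDA $\Rightarrow$ \textbf{A3}.} Suppose the SDA holds with constants $\mu_-,\mu_+,c_0,r_0$. For $x \in \Omega$ and $\delta \le r_0$, I would bound
$\mathbb{P}_X(B(x,\delta)) = \int_{\Omega \cap B(x,\delta)} \mu(z)\,dz \ge \mu_- \lambda(\Omega \cap B(x,\delta)) \ge \mu_- c_0 v_d \delta^d$,
using first the lower bound on $\mu$ and then the $(c_0,r_0)$-regularity of the support. Since $\mu(x) \le \mu_+$, the right-hand side is at least $(\mu_- c_0 v_d/\mu_+)\,\mu(x)\delta^d$, so $\mu \in \blabla$ with $\kappa = \mu_- c_0 v_d/\mu_+$ and $\delta_0 = r_0$.

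\emph{\textbf{A3} $\Rightarrow$ SDA.} Assume now $\mu \ge \mu_- > 0$ and $\mu \in \blabla$ with constants $\kappa,\delta_0$. The first step is to recover an upper bound on $\mu$ for free: evaluating the defining inequality of \textbf{A3} at the single value $\delta = \delta_0$ gives $\kappa \mu(x)\delta_0^d \le \mathbb{P}_X(B(x,\delta_0)) \le 1$, hence $\mu(x) \le \mu_+ := (\kappa \delta_0^d)^{-1}$ for every $x \in \Omega$. The second step is to recover the regularity of $\Omega$: for $x \in \Omega$ and $\delta \le \delta_0$, chaining the bounds
$\kappa \mu_- \delta^d \le \kappa \mu(x)\delta^d \le \mathbb{P}_X(B(x,\delta)) = \int_{\Omega \cap B(x,\delta)} \mu(z)\,dz \le \mu_+ \lambda(\Omega \cap B(x,\delta))$
yields $\lambda(\mathrm{Supp}(\mu) \cap B(x,\delta)) \ge (\kappa\mu_-/\mu_+)\,\delta^d = \big(\kappa\mu_-/(\mu_+ v_d)\big)\,\lambda(B(x,\delta))$, i.e.\ $(c_0,r_0)$-regularity with $c_0 = \kappa\mu_-/(\mu_+ v_d) \le 1$ and $r_0 = \delta_0$. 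Together with $\mu_- \le \mu \le \mu_+$, this is the SDA.

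The argument is elementary; there is no real obstacle, but the step deserving the most care is the converse, where one must notice that \textbf{A3}, being uniform in $x$ and valid for all small radii, already encodes both an upper bound on $\mu$ and the geometric regularity of the support, so that no assumption beyond ``bounded away from zero'' is needed. It is also worth stating explicitly that ``equivalent'' here means the two conditions define the same class of marginals, at the cost of changing the numerical constants $(\mu_-,\mu_+,c_0,r_0) \leftrightarrow (\kappa,\delta_0)$ as computed above.
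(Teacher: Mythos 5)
Your proof is correct and follows essentially the same route as the paper: the direct implication chains the lower bound $\mu \ge \mu_-$ with the $(c_0,r_0)$-regularity of the support, and the converse first extracts an upper bound on $\mu$ from Assumption \textbf{A3} together with the total mass being $1$, then deduces the support regularity from $\kappa\mu_-\delta^d \le \mathbb{P}_X(B(x,\delta)) \le \|\mu\|_\infty\,\lambda(\mathrm{Supp}(\mu)\cap B(x,\delta))$. The only difference is that you track the constants $(\mu_+,c_0,r_0)\leftrightarrow(\kappa,\delta_0)$ explicitly, which the paper leaves implicit.
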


\begin{proof} As soon as the support of $\mu$ is $(c_0,r_{0})$-regular and the density is lower bounded by $\mu_{-}>0$, then SDA implies a minimal mass type assumption since $\forall \delta \leq r_0 $:
$$
  \mathbb{P}_{X} (B(x,\delta)) = \int_{B(x,\delta)} \mu(z) dz \geq \mu_- \times  \lambda[B(x,\delta) \cap  \mathrm{Supp}(\mu) ] \geq c_0 \gamma_d \mu_{-} \delta^d.
$$

Conversely, we can also check the fact that the Strong Minimal Mass Assumption (A3) implies the SDA (including the $(c_0,r_{0})$-regularity of $\mu$). Indeed, since for any $x$ and $\delta\leq \delta_0$:
$$
1 \geq \int_{B(x,\delta)} \mu(x) dx \geq C \mu(x) \delta^d, 
$$
then the density $\mu$ is upper bounded and we obtain that:
$$
\int_{B(x,\delta)} \mu(x) dx \leq \|\mu\|_{\infty}\lambda \left[ \mathrm{Supp}(\mu) \cap B(x,r) \right].
$$
We therefore obtain:
$$
\lambda \left[ \mathrm{Supp}(\mu) \cap B(x,r) \right] \geq C \frac{\mu(x)}{\|\mu\|_{\infty}} \delta^d \geq C \frac{\mu_-}{\|\mu\|_{\infty}} \delta^d.
$$
This concludes the proof of this proposition. \end{proof}

It is possible to link the constants $(c_0,r_0)$  involved in SDA with  $\kappa$ involved in $\blabla^{\mu_{-}}$, but we have omitted their relationships here for the sake of simplicity.
Minimax rates of excess risk under the SDA are established in \cite{AudTsy}. A consequence of Proposition \ref{prop:equi} is that the same lower bound is still valid with $\blabla^{\mu_{-}}$.

\begin{theo}[Theorem 3.3, \cite{AudTsy}]\label{theo:opt}
Assume that Assumptions \textbf{A1-A3} hold and a $\mu_->0$ exists such that $\mu(x)>\mu_-$ for all $x\in \Omega$. Then,  the minimax classification rate is lower bounded as follows:
$$
\inf_{ \Phi}\sup_{\mathbb{P}_{X,Y}\in\mathcal{F}_{L,\alpha} \cap \blabla^{\mu_{-}}} \left[ \mathcal{R}(\Phi)- \mathcal{R}(\Phi^{*}) \right] \gtrsim n^{-\frac{1+\alpha}{2+d}}.
$$
\end{theo}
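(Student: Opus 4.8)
The plan is to establish the lower bound by the standard reduction-to-hypothesis-testing scheme (Assouad-type / Fano-type cube construction) used in \cite{AudTsy} and \cite{Tsybakov_tout_seul}, adapted to the present class $\mathcal{F}_{L,\alpha}\cap\blabla^{\mu_-}$. Since Proposition \ref{prop:equi} shows that, for densities bounded away from zero, the Strong Minimal Mass Assumption and the Strong Density Assumption are equivalent, any family of distributions constructed inside the SDA framework of \cite{AudTsy} automatically lies inside $\mathcal{F}_{L,\alpha}\cap\blabla^{\mu_-}$ (up to adjusting the constants $\kappa$, $\mu_-$, $\delta_0$ against $c_0,r_0$). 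Hence the strategy is essentially: quote the construction of \cite{AudTsy}, verify it is admissible for our class, and invoke their lower bound verbatim.

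Concretely, the steps are as follows. First I would fix a cube $Q\subset\Omega$ on which $\mu$ can be taken constant (so the SDA/\textbf{A3} requirements hold trivially with explicit constants), partition it into $m\asymp h^{-d}$ subcubes of side $h$, and place on each subcube a localized Hölder-$1$ bump $\varphi$ of height $\asymp h$ (ensuring the Lipschitz radius $L$ is respected) centered around the level $1/2$, so that $\eta_\sigma(x)=\tfrac12+\sigma_j\, c\,h\,\varphi(\cdot)$ on the $j$-th subcube for a sign vector $\sigma\in\{-1,1\}^m$. Second, I would only ``activate'' a fraction of the subcubes so that the margin condition $\mathbb{P}_X(0<|\eta_\sigma-1/2|<\epsilon)\le C\epsilon^\alpha$ holds for the prescribed $\alpha$; this dictates how many of the $m$ cubes carry mass and how much, exactly as in the calibration of \cite{AudTsy}. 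Third, I would lower bound the excess risk of any classifier by a sum of pairwise testing errors over the hypercube $\{\eta_\sigma\}_\sigma$, bound the Kullback--Leibler divergence between neighboring hypotheses by $n\,\mathbb{E}_X[(\eta_\sigma-\eta_{\sigma'})^2]\lesssim n h^2 \cdot(\text{mass per cube})$, and apply Assouad's lemma; optimizing $h$ (and the activated fraction) against $n$ yields the exponent $h\asymp n^{-1/(2+d)}$ and excess risk $\gtrsim n^{-(1+\alpha)/(2+d)}$.

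The one genuine verification — rather than an obstacle — is checking that the constructed family indeed sits in $\blabla^{\mu_-}$: one needs the marginal $\mu$ to be bounded below by a fixed $\mu_->0$ and to satisfy $\mathbb{P}_X(B(x,\delta))\ge\kappa\mu(x)\delta^d$ for all $x\in\Omega$ and all small $\delta$, with $\kappa$ independent of $\sigma$. Taking $\mu$ piecewise constant and supported on a $(c_0,r_0)$-regular set (e.g. a finite union of cubes with a fixed geometry) makes this immediate via the computation already given in the proof of Proposition \ref{prop:equi}, namely $\mathbb{P}_X(B(x,\delta))\ge\mu_-\lambda[B(x,\delta)\cap\mathrm{Supp}(\mu)]\ge c_0\gamma_d\mu_-\delta^d$. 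Since this is uniform in the hypotheses, the whole cube family lies in $\mathcal{F}_{L,\alpha}\cap\blabla^{\mu_-}$, and the lower bound of Theorem 3.3 of \cite{AudTsy} transfers directly. In short, the proof reduces to (i) recalling that \textbf{A3} $\Leftrightarrow$ SDA on this subclass (Proposition \ref{prop:equi}), and (ii) citing the corresponding lower bound of \cite{AudTsy}; the main work — the hypercube construction and the Assouad/Fano computation — is exactly theirs and need not be reproduced.
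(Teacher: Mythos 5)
Your proposal is correct and follows essentially the same route as the paper: the paper states this result as Theorem 3.3 of \cite{AudTsy} and justifies its validity over $\mathcal{F}_{L,\alpha}\cap\blabla^{\mu_-}$ precisely through Proposition \ref{prop:equi} (equivalence of SDA and the Strong Minimal Mass Assumption for bounded away from zero densities), with the Assouad hypercube construction recalled in Section \ref{sec:lowerbound}. Your verification that the hypercube family (piecewise constant $\mu$, Lipschitz bumps of height $\asymp h$, activated fraction calibrated to the margin, $h\asymp n^{-1/(2+d)}$) lies in the stated class is exactly the transfer step the paper relies on, so nothing further is needed.
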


Thanks to the previous lower bound, we can conclude that the nearest neighbor rule achieve the minimax rate of convergence in the particular case where the density $\mu$ is lower bounded on its (compact) support. As already discussed in \cite{MamTsy} or \cite{AudTsy}, the higher the margin index $\alpha$ is, the smaller the excess risk will be. On the other hand, the performance deteriorates as the dimension of the considered problem increases. This corresponds to the classical curse of the dimensionality. The lower bound obtained by \cite{AudTsy} is based  on an adaptation of standard tools from nonparametric statistics (Assouad's Lemma). This proof is of primary importance for next lower bound results. It is recalled in Section \ref{s:proofs} for the sake of convenience.

\subsection{The Smooth discriminant analysis model (Binomial sample-size)}

While the supervised classification model (also referred to as the Poisson sample-size model) has been intensively studied in the last decades, the smooth discriminant analysis model has been considered as an alternative approach. This model is presented in \cite{MamTsy} and is referred to as a binomial model in \cite{Hall}. It assumes that we have two independent samples $\mathcal{S}_1=(X_1,\dots,X_n)$ and $\mathcal{S}_2=(\tilde X_1,\dots,\tilde X_n)$ of i.i.d. random variables at our disposal, with densities $f$ and $g$ respectively. Given a new incoming observation, the goal is then to predict its corresponding label, namely to determine whether $X$ comes from the density $f$ or $g$. \\

\qquad 
In the classification setting, the positions are drawn according to $\mu$ and the labels are then sampled using $\mathcal{B}(\eta(X))$, which makes the values of the labels $(Y_{(i)})_{1 \leq i \leq n}$ completely independent each other, conditionally to their positions $(X_{(i)})_{1 \leq i \leq n}$. This key observation is no longer true in the smooth discriminant analysis: conditionally to ordered spatial inputs  induced in the nearest neighbor rule, the random variables $(Y_{(1)}, \ldots, Y_{(k_n)})$ \textbf{are not independent}. This significantly complicates the analysis of the nearest neighbor rule and is a major difference with the standard classification task. \\

\qquad 
We briefly provide our main result on the nearest neighbor rule with the smooth discriminant analysis below. More complete details can be found in Appendix  \ref{s:sda}.

\begin{theo}
\label{theo:sda} 
The nearest neighbor classifier $\Phi_{n,k_n}$ with $k_n =  \lfloor n^{\frac{2}{2+d}}\rfloor $ satisfies
$$
\sup_{\mathbb{P}_{X,Y} \in\mathcal{F}_{L,\alpha} \cap\blabla^{\mu_-}} \left[ \mathcal{R}^{Binom}(\Phi_{n,k_n})- \mathcal{R}^{Binom}(\Phi^{*}) \right] \lesssim  \log(n) n^{-\frac{1+\alpha}{2+d}},
$$
where  $\mathcal{R}^{Binom}$ denotes the risk in the smooth discriminant analysis setting.
\end{theo}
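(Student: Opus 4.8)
The plan is to reduce the Binomial (smooth discriminant analysis) model to the Poisson (binary classification) model of Theorem~\ref{theo:Main1} via a Poissonization argument, so that the extra $\log n$ factor is precisely the price paid for the loss of conditional independence of the labels. First I would recall the setup: we have two independent i.i.d. samples $\mathcal{S}_1 = (X_1,\dots,X_n)$ with density $f$ and $\mathcal{S}_2 = (\tilde X_1,\dots,\tilde X_n)$ with density $g$, and the regression function in this model is $\eta(x) = f(x)/(f(x)+g(x))$ (up to prior weights), with marginal density $\mu \propto f+g$. The nearest neighbor classifier is built from the pooled sample of size $2n$, reordering the $2n$ points by distance to $X$ and averaging the labels $Y_{(j)}$, where $Y_i = 1$ if the point comes from $\mathcal{S}_1$ and $0$ otherwise. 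The key structural fact, as emphasized in the text, is that conditionally on the ordered positions $(X_{(1)},\dots,X_{(k_n)})$, the labels $Y_{(j)}$ are \emph{not} independent: knowing that a given ordered position carries label $1$ slightly decreases the chance that a later one does, because the total counts are fixed at $n$ and $n$.

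Next I would carry out the Poissonization. Replace the two fixed-size samples by two independent Poisson processes on $\Omega$ with intensities $n f$ and $n g$; equivalently, let $N_1 \sim \mathrm{Poisson}(n)$, $N_2 \sim \mathrm{Poisson}(n)$ be independent and draw $N_1$ points from $f$ and $N_2$ from $g$. Under this Poissonized model, the superposition theorem for Poisson processes gives that the pooled point process has intensity $n(f+g)$ and, crucially, that \emph{conditionally on the positions}, each point independently carries label $1$ with probability $f/(f+g) = \eta$ and label $0$ otherwise — exactly the conditional independence structure of the binary classification model with marginal $\mu \propto (f+g)/2$ and regression function $\eta$. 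Thus the Poissonized risk is controlled by (a mild variant of) Theorem~\ref{theo:Main1}, giving a bound of order $n^{-\frac{1+\alpha}{2+d}}$; one must check that working with a random total sample size $\sim 2n$ concentrated around $2n$ does not affect the choice $k_n = \lfloor n^{2/(2+d)}\rfloor$ or the rate, which follows from standard Poisson concentration since $N_1 + N_2$ deviates from $2n$ by $O(\sqrt{n})$ with overwhelming probability.

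The remaining — and main — step is the de-Poissonization, i.e., transferring the bound back to the fixed-size model, and this is where the $\log n$ arises. Write $\mathcal{R}^{Binom}_n$ for the excess risk with sample sizes exactly $(n,n)$ and $\widetilde{\mathcal{R}}_n$ for the Poissonized excess risk. Since $\widetilde{\mathcal{R}}_n = \sum_{m_1,m_2} \mathbb{P}(N_1 = m_1)\mathbb{P}(N_2=m_2)\, \mathcal{R}^{Binom}_{m_1,m_2}$, and the map $(m_1,m_2)\mapsto \mathcal{R}^{Binom}_{m_1,m_2}$ is slowly varying in the relevant range, one cannot immediately invert this. The standard device is a monotonicity/coupling argument: condition on the event that $N_1 \geq n$ and $N_2 \geq n$ (probability bounded below by a constant), and couple the Poissonized samples so that they contain the fixed-size samples as sub-samples; then argue that adding extra sample points can only degrade the $k$-NN estimate in a controlled way, or alternatively run a union bound over the $O(\sqrt{n\log n})$ likely values of $(N_1,N_2)$ combined with the local Lipschitz-in-$n$ behaviour of the risk bound. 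Concretely, the excess risk bound from the Poisson analysis holds for every sample size in a window $[2n - c\sqrt{n\log n},\, 2n + c\sqrt{n\log n}]$ with probability $1 - n^{-2}$, say; summing the at most $O(\sqrt{n\log n})$ individual bounds, or rather taking the worst over this window, inflates the constant but not the polynomial rate — the $\log n$ enters through the deviation bound $\mathbb{P}(|N_i - n| > t\sqrt{n}) \leq 2e^{-t^2/3}$ needed to make the failure probability negligible against the rate. I expect the delicate point to be making the "adding points degrades $k$-NN in a controlled way" comparison rigorous: unlike in density estimation, the $k$-NN label average is not monotone in the sample, so one should instead argue via the stability of the ordered distances $\|X_{(k_n)}(X) - X\|$ under sub-sampling (these change by a negligible amount when the sample size fluctuates by $O(\sqrt n)$ out of $n$), and then re-run the bias/variance decomposition of the proof of Theorem~\ref{theo:Main1} conditionally, absorbing the mismatch between $k_n$ neighbours in the two models into the bias term, which is already of the target order. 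Collecting the Poissonized bound, the concentration of $(N_1,N_2)$, and the stability estimate yields $\sup_{\mathcal{F}_{L,\alpha}\cap \blabla^{\mu_-}} [\mathcal{R}^{Binom}(\Phi_{n,k_n}) - \mathcal{R}^{Binom}(\Phi^*)] \lesssim \log(n)\, n^{-\frac{1+\alpha}{2+d}}$, as claimed; the full details are deferred to Appendix~\ref{s:sda}.
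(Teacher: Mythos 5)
Your first half matches the paper: the proof in Appendix \ref{s:sda} is indeed based on Poissonization of the sample sizes, and the key observation is exactly the one you state, namely that the Poisson-mixed pooled sample is distributed as an i.i.d.\ sample $(U_i,V_i)_{i\le N}$ with $U\sim \frac{f+g}{2}$ and $V\vert U\sim\mathcal{B}(\eta(U))$, so that conditional independence of the labels is restored and the Hoeffding-type analysis of Theorem \ref{theo:Main1} applies. The divergence -- and the gap -- is in your de-Poissonization. The paper does not compare risks across sample sizes at all: it works at the level of pointwise deviation probabilities and uses the elementary identity $\mathcal{L}(\hat\eta_n(x))=\mathcal{L}(\eta_n^{N_1,N_2}(x)\,\vert\,N_1=N_2=n)$, which gives
$$
\PP\left(\left|\hat\eta_n(x)-\EE\hat\eta_n(x)\right|>t\right)\;\le\;\frac{\PP\left(\left|\eta_n^{N}(x)-\EE\eta_n^{N}(x)\right|>t\right)}{\PP(N_1=N_2=n)}\;\lesssim\; n\,\Big(e^{-2k_nt^2}+\mathbf{1}_{\{|t|\le 1\}}e^{-n}\Big),
$$
by Stirling. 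The factor $n$ in front of the Hoeffding bound is then absorbed by taking $k_n\epsilon_n^2\sim a\log n$, and this is precisely where the logarithmic loss in the rate comes from -- not, as you claim, from Poisson tail deviations of $N_1,N_2$.

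Your proposed route does not close. The statement that ``the excess risk bound from the Poisson analysis holds for every sample size in a window $[2n-c\sqrt{n\log n},\,2n+c\sqrt{n\log n}]$'' is circular: once you condition on (or fix) the pair $(N_1,N_2)=(m_1,m_2)$, you are back in a Binomial model with fixed class counts, the labels $\mathcal{Y}_{(j)}$ are again dependent given the positions, and no per-sample-size bound is available -- the conditional independence that powers the Poisson analysis only exists for the unconditioned mixture. Likewise the coupling step (``adding points degrades the $k$-NN estimate in a controlled way'') is not only unproven, it is exactly the kind of monotonicity-in-$n$ statement for nearest neighbor risks that is not known and that your own stability-of-ordered-distances remark does not repair, because the obstruction is the label dependence, not the geometry of the neighbors. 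So as written the argument reduces the problem to an unestablished comparison of $\mathcal{R}^{Binom}_{m_1,m_2}$ across $(m_1,m_2)$, whereas the paper's conditioning trick avoids any such comparison at the cost of a polynomial factor that is then killed inside the exponential by the logarithmic inflation of $k_n\epsilon_n^2$; that is the missing idea.
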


\qquad
To the best of our knowledge,  the performance of the nearest neighbor classifier in the binomial sample-size model has only been studied in \cite{Hall}. In their paper, the difference between the Poisson and the binomial model is studied through Reny's representation of order statistics. In contrast, we directly compute an upper bound of the binomial model. Our main argument relies on a Poissonization of the sample size (see, \textit{e.g.}, \cite{Kac}). Even if it is a standard alternative to cope with dependencies in probability, such a method has not yet been applied for smooth discriminant analysis.

\qquad
Regarding the obtained consistency rates now, our result misses a log term in the smooth discriminant analysis setting. In \cite{Hall}, the authors show that the difference of the excess risk between the classification and the smooth discriminant analysis is on the order of $o\left(k^{-1}+\left(\frac{k}{n}\right)^{4/d}\right)$ for \textit{twice} differentiable functions $\eta$ (instead of only the Lipschitz situation in our case) and their resulting rate is $n^{-2/(4+d)}$ for the optimal choice $k_n = n^{4/(4+d)}$. Following their argument with a \textit{Lipschitz} regression function $\eta$, their excess risk becomes $n^{-1/(2+d)}$ for the binomial model. Hence, for a margin $\alpha=0$, our result in Theorem \ref{theo:sda} is weaker than the one in \cite{Hall} (because of our log term). This is not yet the case as soon as the margin $\alpha>0$ since the result of \cite{Hall} does not take this parameter, which may be central to obtain fast rates, into account. Moreover, the approach of \cite{Hall} does not seem to simply manage the margin information of the classification.

\qquad
Finally, our Poissonization method also applies for general densities that are not necessarily bounded from below (see Appendix \ref{s:sda}). This is a major difference with the results of \cite{Hall} that are valid with a compactly supported and bounded away from   zero density $\mu$.


\section{General finite dimensional case}
\label{s:pascompact}

\subsection{The Tail Assumption}
Results of the previous section are designed for the problem of supervised binary classification with compactly supported inputs and lower bounded densities. Such an assumption is an important prior on the problem that may be improper in several practical settings. Various situations involve Gaussian, Laplace, Cauchy or Pareto distributions on the observations, and both  the compactness  and the boundedness away from zero assumptions may seem to be very unrealistic. This is even more problematic when dealing with functional classification with a Gaussian White Noise model (GWN). In such a case, observations are described through an infinite sequence of Gaussian random variables and the SDA or $\blabla^{\mu_{-}}$ are far from being well-tailored for this situation (see \cite{Lian} for a discussion and further references).

\vspace{1em}
\qquad
This section is dedicated to a more general case of binary supervised classification problems where the marginal density $\mu$ of $X$ is no longer assumed to be lower bounded on its support. The main problem related to such a setting is that we have to predict labels in places where few (or even no) observations are available in the training set. In order to address this problem, we take the following assumption. 

\paragraph{Assumption \textbf{A4}}(Tail Assumption) \textit{A
  function $\psi$ that satisfies $\psi(\epsilon) \rightarrow 0$ as $\epsilon \rightarrow 0$ and that increases in a neighborhood  of 0 exists such that
$$ \mathbb{P}_{(X,Y)} \in
\PTf := \left\{ \mathbb{P}_{X} \, : \, \exists\,  \epsilon_0 \in \RR_+^* :  \, \forall \epsilon < \epsilon_0, \,  \mathbb{P}_X \left( \{\mu<\epsilon\} \right) \leq \psi(\epsilon)  \right\},
$$
where $\PT$ corresponds to the particular case where $\psi = Id$. \\}


The aim of this Tail Assumption is to ensure that the set where $\mu$ is small has a small mass. We use the notation $\mathcal{T}$ because of the interpretation on the \textit{tail} of $\mu$, but  $\PTf$ is not just an assumption on the tail of the $\mu$. It is, in fact, an assumption on the behavior of $\mu$ near the set $\left\{ \mu = 0 \right\}$. We provide some examples of marginal distribution below that satisfy this tail requirement. In Section \ref{sec:noncons} below, we prove that the Tail Assumption (\textbf{A4}) is unavoidable in this setting. In Section \ref{sec:minimax8}, we investigate the performances of the nearest neighbor rule in this setting. 
\vspace{0.5cm}

\begin{ex} Following are several families of densities in $\PTf$.

\begin{itemize}
\item Laplace distributions obviously satisfy $\PT$, and a straightforward  integration by parts shows that Gamma distributions
$\Gamma(k,\theta)$ satisfy $\PTf$ with $\psi(\epsilon) = \epsilon \log(\epsilon^{-1})^{k-1}$ (the term around $x=0$ is on the order of $\epsilon^{k/(k-1)}$ and thus negligible compared to the term around $+ \infty$).
\item An immediate computation shows that the family of Pareto distributions of parameters $(x_0,k)$ satisfies $\PTf$ where $\psi(\epsilon) = \epsilon^{k/(k+1)}$, regardless of the value of $x_0$.
\item The family of Cauchy distributions satisfies $\PTf$ with $\psi(\epsilon) = \sqrt{\epsilon}$.
 \item Univariate Gaussian laws $\gamma_{m,\sigma^2}$ with mean $m$ and variance $\sigma^2$ satisfy
$$
\gamma_{m,\sigma^2}(x) \leq \epsilon \Longleftrightarrow |x-m| \geq t_{\sigma,\epsilon} := \sqrt{2} \sigma \sqrt{ \log\left(\frac{1}{\epsilon}\right) + \log(\frac{1}{\sigma \sqrt{2 \pi}} )},
$$
and a standard result on the size of Gaussian tails (see \cite{Cox}) yields
$$
\gamma_{m,\sigma^2}\left( \gamma_{m,\sigma^2} \leq \epsilon \right) = \frac{\epsilon}{t_{\sigma,\epsilon}} \left[1-\frac{1}{t_{\sigma,\epsilon}^{2}}+\frac{1.3}{t_{\sigma,\epsilon}^4} \ldots \right] \lesssim \frac{\epsilon }{\sqrt{\log \left(\frac{1}{\epsilon}\right)}}.
$$
Hence, univariate Gaussian laws satisfy $\PTf$ with $\psi( \epsilon) = \epsilon \log(\epsilon^{-1})^{-1/2}$.
\item If  $\mathbf{m}$ is any real vector of $\RR^d$ and $\Sigma^2$ a covariance matrix whose spectrum is $\lambda_1 \geq \ldots \lambda_d \geq 0$:
$$
\gamma_{\mathbf{m},\Sigma^2}\left( \gamma_{\mathbf{m},\Sigma^2} \leq \epsilon \right) = \gamma_{\mathbf{0},\Sigma^2}\left( \gamma_{\mathbf{0},\Sigma^2} \leq \epsilon \right) \lesssim \gamma_{\mathbf{0},\Sigma^2} \left( \|X\| \geq \sqrt{2 \lambda_1 \log\left( \frac{1}{\epsilon}\right)} \right).
$$
Careful inspection of Theorem 1 of  \cite{Husler} now yields
$$
 \gamma_{\mathbf{0},\Sigma^2} \left( \|X\| \geq \sqrt{2 \lambda_1 \log\left( \frac{1}{\epsilon}\right)} \right) \sim  C_{\Sigma^2}
 \log\left( \frac{1}{\epsilon}\right)^{r/2-1} \epsilon,
$$
where $C_{\Sigma^2}$ is a constant  that only depends on the spectrum of $\Sigma^2$ and $r$ is the multiplicity of the eigenvalue $\lambda_1$. In particular, $\gamma_{\mathbf{m},\Sigma^2}$ satisfy $\PTf$ where $\psi(\epsilon) = C_{\Sigma^2} \epsilon \log(\epsilon^{-1})^{r/2-1}$.

\end{itemize}
\end{ex}

\subsection{Non-consistency results}\label{sec:noncons}

We first justify the introduction of the sets $\blabla$ and $\PTf$ and discuss their influences regarding uniform lower bounds and even consistency of any estimator. To do this, we first state that the Minimal Mass Assumption (\textbf{A3}) is necessary to obtain \textit{uniformly} consistent classification rules. Second, we  assert that the Tail Assumption (\textbf{A4}) is also unavoidable.

\begin{theo}\label{theo:NON1} Assume that the law $\mathbb{P}_{X,Y}$ belongs to $\mathcal{F}_{L,\alpha}$, then:

\begin{itemize}
\item[i)]
No classification rule can be universally consistent {\em if Assumptions \textbf{A1-A3} hold and not \textbf{A4}}. For any discrimination rule $\Phi_n$ and for any $\epsilon< 4^{-\alpha}$, a distribution $\mathbb{P}_{(X,Y)}$ in $\mathcal{F}_{L,\alpha} \cap  \blabla$ exists such that:
$$
 \mathcal{R}(\Phi_n)- \mathcal{R}(\Phi^{*}) \geq  \epsilon.
$$
\item[ii)]
No classification rule can be universally consistent {\em if Assumption \textbf{A1, A2, A4} hold and not \textbf{A3}}. For any discrimination rule $\Phi_n$ and for any  $\epsilon< 4^{-\alpha}$, a distribution $\mathbb{P}_{(X,Y)}$ in $\mathcal{F}_{L,\alpha} \cap \PT $ exists such that:
$$
 \mathcal{R}(\Phi_n)- \mathcal{R}(\Phi^{*}) \geq  \epsilon.
$$
\end{itemize}
\end{theo}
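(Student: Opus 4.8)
Both parts rest on the same adversarial device; I describe the common skeleton first and then the two specialisations. Fix $n$, fix the discrimination rule $\Phi_n$ (the measurable map sending a size-$n$ sample to a classifier) and fix $\epsilon<4^{-\alpha}$. The plan is to hide, in a region on which the sample is almost surely blind, a large amount of independent binary information. I would put the bulk of the mass in a harmless centred density (a Gaussian-type law, for which \textbf{A1}--\textbf{A3} hold and on which $\eta$ is pinned away from $\tfrac12$), and add a hard region $A=\bigsqcup_{i=1}^{N}A_i$, a disjoint union of $N$ small congruent cells, each of $\mu$-mass $m/N$, with $N$ destined to $+\infty$. Neighbouring cells are separated by thin buffer zones of width $\ge 2t/L$, on which $\mu$ vanishes (or is made negligible) and through which $\eta$ ramps; inside $A_i$ we set $\eta\equiv\tfrac12+t$ or $\eta\equiv\tfrac12-t$ according to a bit $b_i$. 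The buffer width makes it possible to glue $\eta$ into a globally $L$-Lipschitz function (\textbf{A1}), and since the only points with $0<|\eta-\tfrac12|<t$ sit in the zero-mass buffers we get $\mathbb{P}_X\big(0<|\eta-\tfrac12|<u\big)=m\,\mathbf{1}_{\{u\ge t\}}$, so the margin assumption \textbf{A2} holds as soon as $m\le C\,t^{\alpha}$; the optimal trade-off between $t$ and $m$ under this constraint and $t\le\tfrac12$ is what produces the threshold $4^{-\alpha}$ in the statement, and I fix $t,m$ accordingly so that $t\,m>\epsilon$.

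The only difference between (i) and (ii) is the layout of the cells, chosen so that the instance belongs to the prescribed class and fails exactly the missing assumption. For (i) I would tile the $A_i$ inside a box of huge volume, so that $\mu$ is an arbitrarily small constant on $A$: this keeps \textbf{A1}--\textbf{A3} — in particular \textbf{A3} with the given $\kappa$, which is preserved by letting $\mu$ decay linearly to $0$ into each buffer, so that a ball centred near a buffer still collects mass proportional to $\mu(x)\delta^d$ from the neighbouring cell — whereas, as the box is enlarged to reach larger $\epsilon$ or larger $n$, no single $\psi$ bounds $\mathbb{P}_X(\{\mu<u\})$ over the resulting family, i.e.\ \textbf{A4} is absent. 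For (ii) I would instead take the $A_i$ to be tiny balls of fixed, bounded-below density, placed pairwise far apart and pushed far out in the tail; then the mass sits only where $\mu$ is not small, so $\mathbb{P}_X(\{\mu<u\})=0$ for small $u$ and \textbf{A4} holds even with $\psi=\mathrm{Id}$, while \textbf{A3} fails at the centre of each cell, since a ball of radius between $\mathrm{diam}(A_i)$ and the inter-cell distance has $\mu$-mass $m/N\ll\mu(x)\delta^d$. In either case the instance lies in $\mathcal{F}_{L,\alpha}\cap\blabla$, resp.\ $\mathcal{F}_{L,\alpha}\cap\PT$.

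Now the probabilistic core. The sample $\mathcal{S}_n$ has $n$ points, hence meets at most $n$ of the $N$ cells; the other (at least $N-n$) cells are \emph{untouched}. Because $\eta$ is two-valued on $A$ and the labels seen in touched cells carry no information on the bits of untouched cells, $\Phi_n$ restricted to an untouched $A_i$ is independent of $b_i$. Drawing the bits $(b_i)_{1\le i\le N}$ i.i.d.\ uniform and using $\mathcal{R}(\Phi)-\mathcal{R}(\Phi^{*})=\mathbb{E}_X\big[|2\eta(X)-1|\,\mathbf{1}_{\Phi(X)\neq\Phi^{*}(X)}\big]$, one gets for every untouched $A_i$ that $\mathbb{E}_{b_i}\big[\mu(\{\Phi_n\neq\Phi^{*}\}\cap A_i)\big]=\tfrac12\mu(A_i)$, because $\Phi^{*}\equiv b_i$ on $A_i$ and the two antipodal choices of $b_i$ produce complementary error sets, while $|2\eta-1|=2t$ there. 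Summing over untouched cells,
\begin{equation*}
\mathbb{E}_{(b_i)}\big[\,\mathcal{R}(\Phi_n)-\mathcal{R}(\Phi^{*})\,\big]\;\ge\;2t\cdot\tfrac12\cdot\frac{N-n}{N}\,m\;=\;t\,m\Big(1-\frac{n}{N}\Big),
\end{equation*}
the left-hand side being the average of the true excess risks over the instances indexed by $(b_i)$. Hence some deterministic choice of $(b_i)$ yields an instance of the announced class with excess risk at least $t\,m(1-n/N)$, which exceeds $\epsilon$ once $N$ is taken large enough. As this holds for every $\epsilon<4^{-\alpha}$, every $n$ and every rule $\Phi_n$, the minimax excess risk over the class stays bounded below by $4^{-\alpha}$, so no rule can be (uniformly) consistent.

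The work is not in the last display but in making the hard instance at once a bona fide member of the target class and a genuine blind spot: keeping \textbf{A3} with the prescribed constant $\kappa$ in (i) in spite of the buffers, keeping the tail function exactly $\psi=\mathrm{Id}$ while breaking \textbf{A3} in (ii), pinning down the margin constant (the origin of the $4^{-\alpha}$ threshold), and carrying out the $L$-Lipschitz gluing of $\eta$ across the buffers. These verifications are elementary but must be done with care.
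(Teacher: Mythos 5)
Your argument is sound in its core mechanism, but it takes a genuinely different route from the paper. The paper recycles its Assouad machinery: the same hypercube of regression functions $\eta_\sigma$ oscillating by $\pm c_\varphi/(2q)$ on $m$ small balls of constant mass $\omega$, the quantitative bound $\mathcal{R}_n\geq m\frac{\omega}{q}(1-q^{-1}\sqrt{n\omega})$ imported from Audibert--Tsybakov, and then a geometric parameter pushed to an extreme: for (i) the number of balls is inflated to $m=q^{d+\tau}$ with $\tau$ arbitrarily large (support volume blows up, $\blabla$ preserved, no uniform tail), giving the rate $n^{-(1+\alpha)/(2+\alpha+d+\tau)}$ which tends to a constant; for (ii) the within-ball density is replaced by sharp spikes of width $q^{-\gamma}$, $\gamma>1$, which keeps the tail condition $\PT$ (provided $m=O(q^{\gamma d})$) while destroying the minimal-mass inequality at scales between $q^{-\gamma}$ and $q^{-1}$, giving $n^{-(1+\alpha)/(2+\alpha+\gamma d)}$ with $\gamma$ large. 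Your ``untouched cells'' randomization replaces the testing/likelihood step entirely by the elementary observation that a size-$n$ sample meets at most $n$ of the $N$ cells, so the posterior of an unvisited bit stays uniform and contributes half its mass to the error; this is more self-contained and makes the source of inconsistency transparent, but unlike the paper's single construction it does not simultaneously deliver the quantitative rate lower bounds of Theorems \ref{theo:NON2} and \ref{theo:Gene1}. Your way of breaking \textbf{A3} in (ii) (tiny, far-separated cells of bounded-below density) also differs from the paper's (spiky within-cell density), but both serve the same purpose and both only violate $\blabla$ for a $\kappa$, $\delta_0$ held uniform over the family, which is all the statement needs.

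Two repairable slips deserve attention. First, in (i) you propose to let $\mu$ decay linearly into the buffers ``to save \textbf{A3}'', but the buffers are precisely where $\eta$ ramps through $1/2$; a positive-density ramp makes $\mathbb{P}_X(0<|\eta-\tfrac12|<u)$ of order $u$, which violates \textbf{A2} for $\alpha>1$ no matter the constant. The fix is to keep the buffers of exactly zero density: since \textbf{A1} only requires $\eta$ to be $L$-Lipschitz on $\Omega=\mathrm{Supp}(\mu)$, a gap of width $2t/L$ between cells already legitimises the jump from $\tfrac12-t$ to $\tfrac12+t$ with no ramp at all, and \textbf{A3} survives because a ball centred at a support point always captures a constant fraction of the adjacent cell's mass (small balls) or proportionally many cells (large balls), exactly as in the paper's verification of $\blabla$. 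The same remark applies to your full-support Gaussian bulk in (ii): either compactify the bulk or use the disconnected-support observation, and prefer pinning $\eta$ at exactly $\tfrac12$ off the cells (as the paper does on $A_1$, exploiting the strict inequality in \textbf{A2}) rather than ``away from $\tfrac12$'', which otherwise re-enters the margin bound at scale $u\approx\tfrac12$. Second, your bookkeeping for the threshold: under the normalisation $m\leq t^{\alpha}$, $t\leq\tfrac12$, the best you reach is $tm\leq 2^{-(1+\alpha)}$, which is below $4^{-\alpha}$ when $\alpha<1$, so the claim ``every $\epsilon<4^{-\alpha}$'' is not actually secured by the sketch; to be fair, the paper's own proof never derives the constant $4^{-\alpha}$ either, so this is a shared looseness rather than a defect specific to your route.
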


\qquad The first result $i)$ asserts that  even if the Minimal Mass Assumption \textbf{A3} holds for the underlying density on $X$, it is not possible to expect a uniform consistency result over  the entire class of non-compactly considered densities. In some sense, the support of the variable $X$ seems to be too large to obtain reliable predictions with any classifiers without additional assumptions. As discussed above, the Tail Assumption \textbf{A4} may make it possible to counterbalance this curse of support effect (see next section). Such statistical damage has also been observed for the estimation of densities that are supported on the real line instead of being compactly supported, even though such dramatic consequences are not shown here. We refer to \cite{curse} and the references therein for a more detailed description.

\vspace{0.5em}
\qquad  The second result $ii)$ states that the Strong Minimal Mass Assumption \textbf{A3} cannot be skipped for uniform consistency rates and no compactly supported densities. This is in line with the former studies of \cite{Gyorfi78} and \cite{DGKL}. In particular, Lemma 2.2 of \cite{DGKL} takes advantage of some of the positive consequences of this type of assumption. Our proof relies on the construction of a sample size dependent law on $(X,Y)$ that violates our Minimal Mass Assumption \textbf{A3} but \textit{that keeps the regression function $\eta$ in our smoothness class $\mathcal{F}_{L,\alpha}$}. 
This is a major difference with former counter examples built in \cite{DGL} where the non uniform consistency is obtained with a family  of \textit{non-smooth} regression functions $\eta$. In our study, we also obtained a family of \textit{smooth} regression functions for which such phenomena occur.
Even in this case, it is still possible to keep the excess risk strictly positive for any classifier $\Phi_n$ (and no longer for only nearest neighbor rules).

\vspace{0.5em}
\qquad Finally, it should be noted that our inconsistency results always occur when building a network of regression functions $\eta$ that oscillate around the value $1/2$ at the neighborhood of the set  $\left\{\mu=0 \right\}$. In a sense, Theorem \ref{theo:NON1} contributes to the understanding of one of the opens question put forth in \cite{Cannings} on the behavior of the nearest neighbor rule when $\eta$ is oscillating about $1/2$ in the tail.

\subsection{Minimax rates of convergence}\label{sec:minimax8}

In the meantime, when both \textbf{A2}, \textbf{A3} and \textbf{A4} hold, we are able to precisely describe the corresponding minimax rate of convergence.

\subsubsection{Minimax lower bound}

\begin{theo}\label{theo:NON2} Assume that Assumptions \textbf{A1-A4} hold. Then
$$
\inf_{\Phi_n} \sup_{\mathbb{P}_{(X,Y)} \in \mathcal{F}_{L,\alpha} \cap \blabla \cap \PT} \left[ \mathcal{R}(\Phi_n)- \mathcal{R}(\Phi^{*}) \right] \gtrsim n^{-\frac{1+\alpha}{2+\alpha+d}}.
$$
\end{theo}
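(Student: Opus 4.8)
The plan is to prove the lower bound by a standard information-theoretic reduction: we exhibit a finite subfamily $\{\mathbb{P}_\sigma\}_{\sigma\in\{0,1\}^m}\subset \mathcal{F}_{L,\alpha}\cap\blabla\cap\PT$, indexed by hypercube vertices, for which the classification problem is hard, and then apply Assouad's Lemma (as recalled for the proof of Theorem \ref{theo:opt}). The key difference from the compact case of Theorem \ref{theo:opt} is that here the marginal $\mu$ is \emph{not} bounded away from zero, so we are free to place the ``perturbation bumps'' in a region where $\mu$ is small. This is precisely what lets us beat the classical exponent: a bump of radius $\delta$ placed where $\mu\sim\mu_\delta$ costs only mass $\mu_\delta\delta^d$, whereas in the compact case it costs $\mu_-\delta^d$. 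The Tail Assumption \textbf{A4} (in the form $\PT$, i.e. $\psi=\mathrm{Id}$) must be respected, which dictates how small $\mu_\delta$ is allowed to be relative to $\delta$, and this is exactly what produces the new exponent $\frac{1+\alpha}{2+\alpha+d}$.

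Concretely, first I would fix a base measure on $\Omega=\RR^d$ (or a sufficiently large cube) whose density $\mu_0$ is bounded below on a compact ``core'' and decays to a small plateau value outside, chosen so that $\mu_0\in\blabla\cap\PT$ with room to spare. Then I would select $m\asymp (\text{something})$ disjoint balls $B(x_j,\delta)$ located in the plateau region where $\mu_0\equiv\mu_\delta$, with $\delta=\delta_n$ and $\mu_\delta$ to be optimized. On each ball I perturb the regression function: $\eta_\sigma(x)=\tfrac12+\sigma_j\, h\big(\|x-x_j\|/\delta\big)$ for $x\in B(x_j,\delta)$, where $h$ is a fixed smooth bump with $h(0)\asymp L\delta$, so that $\eta_\sigma\in\mathcal{C}^{1,0}(\Omega,L)$ automatically (the Lipschitz constant is controlled by $L\delta/\delta=L$). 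Outside the balls $\eta_\sigma\equiv\tfrac12$ up to a fixed sign choice away from the plateau so that the margin assumption is not violated globally. The margin condition \textbf{A2} with exponent $\alpha$ forces $\mathbb{P}_X(0<|\eta-\tfrac12|<\epsilon)\le C\epsilon^\alpha$; since the only place $|\eta_\sigma-\tfrac12|$ is small-and-positive is inside the bumps, and there it ranges over $(0,c L\delta]$ with $X$-mass $\lesssim m\,\mu_\delta\delta^d$, we need $m\,\mu_\delta\delta^d\lesssim (L\delta)^\alpha$, i.e. $m\asymp \delta^{\alpha-d}\mu_\delta^{-1}$ (up to constants).

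Next I would carry out the Assouad computation. The excess risk separating two hypotheses differing in coordinate $j$ is, as usual, $\gtrsim \int_{B(x_j,\delta)}|\eta_\sigma(x)-\tfrac12|\,\mu_0(x)\,dx \asymp L\delta\cdot\mu_\delta\delta^d = L\mu_\delta\delta^{d+1}$, and the per-coordinate testing affinity between $\mathbb{P}_\sigma^{\otimes n}$ and its flip is controlled via the Hellinger/chi-square distance, which for Bernoulli$(\tfrac12\pm h)$ labels on a region of $X$-mass $\mu_\delta\delta^d$ is $\asymp n\,\mu_\delta\delta^d\,(L\delta)^2 = nL^2\mu_\delta\delta^{d+2}$; keeping this $\le c<1$ requires $\mu_\delta\delta^{d+2}\asymp (nL^2)^{-1}$. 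Assouad's Lemma then yields a lower bound $\gtrsim m\cdot L\mu_\delta\delta^{d+1}$. Plugging $m\asymp \delta^{\alpha-d}\mu_\delta^{-1}$ gives lower bound $\asymp L\,\delta^{\alpha+1}$, and the constraint $\mu_\delta\delta^{d+2}\asymp n^{-1}$ together with the requirement $\mu_\delta\le\delta$ forced by $\PT$ (the plateau mass $\mathbb{P}_X(\mu<\epsilon)$ near $\epsilon=\mu_\delta$ must be $\le\mu_\delta$, which after tracking the geometry of the plateau is the binding constraint) pins down $\mu_\delta\asymp\delta$, hence $\delta^{d+3}\asymp n^{-1}$... — here I must be careful: matching against the claimed rate $n^{-(1+\alpha)/(2+\alpha+d)}$ means the optimization should give $\delta^{\alpha+1}\asymp n^{-(1+\alpha)/(2+\alpha+d)}$, i.e. $\delta\asymp n^{-1/(2+\alpha+d)}$, so the constraints must combine to $\delta^{2+\alpha+d}\asymp n^{-1}$; I would solve the linear system in $\log\delta,\log\mu_\delta,\log m$ imposed by (margin) $m\mu_\delta\delta^d\asymp\delta^\alpha$, (testing) $n\mu_\delta\delta^{d+2}\asymp 1$, and (tail $\PT$) $\mu_\delta\asymp\delta^{?}$ to land exactly on it, and this bookkeeping — getting all three exponents consistent while checking $\mu_0$ genuinely lies in $\blabla$ (the minimal-mass lower bound $\mathbb{P}_X(B(x,\delta'))\ge\kappa\mu(x)\delta'^d$ must survive the bumps and the plateau) — is the main obstacle.

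The delicate point, and where I expect to spend the most care, is the simultaneous verification of membership in \emph{all three} classes for the perturbed family: $\mathcal{F}_{L,\alpha}$ (Lipschitz \emph{and} margin, the latter being global and sensitive to how $\eta_\sigma$ behaves off the bumps), $\blabla$ with a \emph{uniform} $\kappa$ independent of $n$ (this is why a plateau rather than a smoothly decaying tail is convenient — on a plateau the Besicovitch-type inequality is trivial), and $\PT$ with $\psi=\mathrm{Id}$ and $\epsilon_0$ fixed. One must choose the base density $\mu_0$ so that $\mathbb{P}_{\mu_0}(\{\mu_0<\epsilon\})\le\epsilon$ for all small $\epsilon$ with the bumps only lowering $\eta$, never $\mu$, so the tail bound is stable under perturbation. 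I would also confirm that the hypercube dimension $m=m_n\to\infty$ (so Assouad is nontrivial) and that the $m$ balls fit disjointly inside the plateau region, whose diameter can be taken fixed; since $\delta\to0$ and $m\asymp\delta^{\alpha-d}\mu_\delta^{-1}\to\infty$ with total volume $m\delta^d\to0$, this is fine. With these checks in place, Assouad's Lemma delivers the stated bound.
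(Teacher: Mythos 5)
Your construction is the same as the paper's (Assouad hypercube of bumps of radius $\delta=1/q$ in $\eta$, placed on a region where the density is constant and small, with the three constraints margin/testing/tail determining the calibration), and your margin constraint $m\,\mu_\delta\delta^d\lesssim (L\delta)^\alpha$ and testing constraint $n\,\mu_\delta\delta^{d+2}\lesssim 1$ coincide with the paper's $m\omega=O(q^{-\alpha})$ and $\omega\le q^2/n$ with $\omega=\mu_\delta\delta^d$. The gap is in the third constraint, which is precisely where the new exponent comes from, and you left it unresolved after noticing your calibration gives $\delta^{d+3}\asymp n^{-1}$ rather than $\delta^{2+\alpha+d}\asymp n^{-1}$. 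The Tail Assumption with $\psi=\mathrm{Id}$ does \emph{not} force ``$\mu_\delta\le\delta$''; it is a constraint on the \emph{mass} of the low-density region, not on the density level itself. Since $\mathbb{P}_X(\{\mu<\epsilon\})=0$ for $\epsilon\le\mu_\delta$ and $\mathbb{P}_X(\{\mu<\epsilon\})=m\,\mu_\delta\delta^d$ for $\epsilon$ just above $\mu_\delta$ (the rest of the density being bounded below), the binding requirement is $m\,\mu_\delta\delta^d\lesssim\mu_\delta$, i.e. $m\delta^d\lesssim 1$: the total Lebesgue volume of the bumps stays bounded (with $\psi=\mathrm{Id}$ the support in fact remains compact, as the paper remarks after its proof).

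With the constraint stated correctly the system closes and gives the claimed rate: saturating $m\delta^d\asymp1$ and combining with the margin constraint yields $\mu_\delta\asymp\delta^\alpha$ (not $\mu_\delta\asymp\delta$); the testing constraint then reads $n\,\delta^{\alpha}\delta^{d+2}\asymp1$, so $\delta\asymp n^{-1/(2+\alpha+d)}$, and Assouad gives $m\cdot L\mu_\delta\delta^{d+1}\asymp L\delta^{1+\alpha}\asymp n^{-(1+\alpha)/(2+\alpha+d)}$. This is exactly the paper's calibration $\omega=q^{-d}\psi^{-1}(q^{-\alpha})$, $m=q^{d}q^{-\alpha}/\psi^{-1}(q^{-\alpha})$, $\omega=q^2/(2n)$, specialized to $\psi=\mathrm{Id}$. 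Two further small points to tighten: the plateau level $\mu_\delta\asymp\delta^\alpha$ depends on $n$, so your ``fixed base density $\mu_0$'' must itself be indexed by $n$ (harmless, but it should be said, and the constants $\kappa$ in \blabla and $C$ in the margin and tail conditions must be checked uniform in $n$, as the paper does); and with $m\asymp\delta^{-d}$ your worry about fitting $m$ disjoint balls of radius $\asymp\delta$ is resolved since $m\delta^d\asymp1$, but your asserted ``$m\delta^d\to0$'' was an artifact of the incorrect calibration.
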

 For the sake of convenience, we  briefly outline the proof of Theorem \ref{theo:opt} borrowed from \cite{AudTsy}  in Section \ref{sec:lowerbound}. It is then adapted to our new set of assumptions. 

\vspace{0.5em}
\qquad Theorem \ref{theo:Gene1} below provides some lower bounds for different tails of distributions (through the function $\psi$). 
It should be noted that we recover the known rate of compactly supported densities with the so-called \textit{Mild Density Assumption} of \cite{AudTsy} in the particular case $\psi = Id$. 
This implies that in the non-compact case, the rate cannot be improved compared to the compact setting, even with an Additional Tail assumption.

\subsubsection{An upper bound for the nearest neighbor rule}\label{sec:sliced}

When the density is no longer bounded away from $0$, the integer $k_n$ will be chosen in order to counterbalance  the vanishing probability of the small balls in the tail of the distributions. For example, when $\psi=Id$, we show that a suitable choice of the integer $k_n$ is:
$$
k_n :=\lfloor  n^{\frac{2}{3+\alpha +d}} \rfloor ,
$$
which appears to be quite different from the one in the previous section. 

\begin{theo}\label{theo:Main2} Assume that \textbf{A1}-\textbf{A3} hold and if the Tail Assumption \textbf{A4} is driven by $\psi=Id$, the choice $k_n :=\lfloor  n^{\frac{2}{3+\alpha +d}} \rfloor $ yields:
$$
\sup_{\mathbb{P}_{(X,Y)} \in \mathcal{F}_{L,\alpha} \cap  \PT \cap \blabla} \left[ \mathcal{R}(\Phi_{n,k_n})- \mathcal{R}(\Phi^{*}) \right] \lesssim  n^{-\frac{(1+\alpha)}{(3+\alpha+d)}}.
$$

\end{theo}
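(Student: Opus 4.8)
The plan is to run the standard plug-in analysis, controlling the excess risk of $\Phi_{n,k_n}$ through the concentration of $\hat\eta_n$ around $\eta$, but now carefully tracking the contribution of the tail region $\{\mu<\epsilon\}$ where few neighbors are available. First I would invoke the classical plug-in bound: since $\Phi_{n,k_n} = \mathbf{1}_{\{\hat\eta_n>1/2\}}$ and $\Phi^* = \mathbf{1}_{\{\eta>1/2\}}$, a pointwise inequality gives
\begin{equation*}
\mathcal{R}(\Phi_{n,k_n}) - \mathcal{R}(\Phi^*) \leq 2\,\EE_X\!\left[\, \left|\eta(X)-\tfrac12\right|\,\mathbf{1}_{\{\Phi_{n,k_n}(X)\neq\Phi^*(X)\}}\,\right] \leq 2\,\EE_X\!\left[\, \left|\eta(X)-\tfrac12\right|\,\mathbb{P}_{\otimes^n}\!\left(|\hat\eta_n(X)-\eta(X)|\geq\left|\eta(X)-\tfrac12\right|\right)\right].
\end{equation*}
The inner probability is then bounded by splitting $\hat\eta_n(X)-\eta(X)$ into a bias term and a stochastic term: the bias is at most $L$ times the distance to the $k_n$-th nearest neighbor, $\|X_{(k_n)}(X)-X\|$, and the stochastic term $\frac1{k_n}\sum_j(Y_{(j)}(X)-\eta(X_{(j)}(X)))$ is, conditionally on the positions, a centered average of independent Bernoulli deviations, so Hoeffding gives sub-Gaussian tails of order $e^{-c k_n t^2}$.

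The next step is the geometric control of $\|X_{(k_n)}(X)-X\|$. Using Assumption \textbf{A3}, for a ball of radius $\delta$ around $x$ we have $\mathbb{P}_X(B(x,\delta))\geq \kappa\mu(x)\delta^d$, so the event $\{\|X_{(k_n)}(x)-x\|>\delta\}$ means fewer than $k_n$ of the $n$ sample points fell in $B(x,\delta)$, which by a Chernoff/binomial tail bound is unlikely once $n\kappa\mu(x)\delta^d \gtrsim k_n$, i.e. once $\delta \gtrsim (k_n/(n\mu(x)))^{1/d}$. Thus, up to exponentially small probability, the bias at $x$ is of order $L\big(k_n/(n\mu(x))\big)^{1/d}$ — note the crucial dependence on $\mu(x)$, which blows up in the tail. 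I would then partition $\Omega$ into the ``bulk'' $\{\mu\geq\epsilon_n\}$ and the ``tail'' $\{\mu<\epsilon_n\}$ for a threshold $\epsilon_n$ to be optimized. On the bulk, the bias is $\lesssim (k_n/(n\epsilon_n))^{1/d}$ and the stochastic fluctuation is $\lesssim k_n^{-1/2}$, so combining with the margin assumption \textbf{A2} (integrating $|\eta-1/2|$ against the tail bound in the exact way done in the proof of Theorem \ref{theo:Main1}) yields a bulk contribution of order $\big[(k_n/(n\epsilon_n))^{1/d} + k_n^{-1/2}\big]^{1+\alpha}$. On the tail, one cannot control the bias, so one simply bounds $|\eta(X)-1/2|\leq 1/2$ and the misclassification indicator by $1$, giving a tail contribution $\lesssim \mathbb{P}_X(\{\mu<\epsilon_n\}) \leq \psi(\epsilon_n) = \epsilon_n$ since $\psi=Id$.

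Finally I would optimize. The total excess risk is bounded, up to constants and exponentially negligible terms, by
\begin{equation*}
\left[\left(\frac{k_n}{n\epsilon_n}\right)^{1/d} + \frac{1}{\sqrt{k_n}}\right]^{1+\alpha} + \epsilon_n.
\end{equation*}
Balancing the two fluctuation terms suggests $k_n \sim (n\epsilon_n)^{2/(2+d)}$ so that each is of order $k_n^{-1/2}$, giving a bulk term $\sim k_n^{-(1+\alpha)/2}$; then balancing $k_n^{-(1+\alpha)/2} \sim \epsilon_n$ against the relation $\epsilon_n \sim k_n^{(2+d)/2}/n$ pins down $k_n \sim n^{2/(3+\alpha+d)}$ and $\epsilon_n \sim n^{-(1+\alpha)/(3+\alpha+d)}$, yielding the claimed rate $n^{-(1+\alpha)/(3+\alpha+d)}$. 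The main obstacle I anticipate is making the tail/bulk split fully rigorous while keeping the $\mu(x)$-dependent bandwidth bound uniform: one must handle the set where $\mu(x)$ is extremely small (so that even the exponential bound on $\|X_{(k_n)}(x)-x\|$ degrades) and show that, after integrating against $d\mu(x)$ and using \textbf{A4}, this region still contributes only at the target rate — this is exactly where the Tail Assumption is used, and where the interplay between the deterministic threshold $\epsilon_n$ and the random geometry of nearest neighbors must be argued carefully rather than pointwise. One also needs to check that the exponentially small ``bad event'' probabilities, when integrated over all of $\Omega$ (which may have infinite Lebesgue measure), remain negligible; this follows because they are integrated against the probability measure $\mu(x)dx$ and the exponent $k_n\to\infty$.
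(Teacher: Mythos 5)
Your proposal follows essentially the same route as the paper's proof: you split $\RR^d$ into the tail $\{\mu<\epsilon_n\}$, bounded via Assumption \textbf{A4} with $\psi=Id$, and the bulk $\{\mu\geq\epsilon_n\}$, treated exactly as in Theorem \ref{theo:Main1} with the lower bound $a=\epsilon_n$ (the paper writes $a=n^{-\gamma}$), and your final balancing reproduces the paper's optimization $\gamma=\frac{1+\alpha}{3+\alpha+d}$, giving the same $k_n\sim n^{2/(3+\alpha+d)}$ and the claimed rate. The technical ingredients you invoke (the risk identity, Hoeffding conditionally on the positions, the bias bound $\lesssim(k_n/(n\mu(x)))^{1/d}$ from \textbf{A3}, and the dyadic margin integration) are precisely the paper's Propositions \ref{prop:gyorfi}, \ref{prop:use} and \ref{prop:Delta2}, so the argument is correct and matches the published proof.
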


The proof of Theorem \ref{theo:Main2} is provided in Section \ref{s:preuve_noncompact}. The above results indicate that the price to pay for the classification from entries in compact sets to arbitrary large sets of $\mathbb{R}^d$ is translated by the degradation from $n^{-(1+\alpha)/(2+d)}$ to at least $n^{-(1+\alpha)/(2+\alpha+d)}$ (see, \textit{e.g.}, Theorem \ref{theo:NON2} when $\psi(\epsilon)\sim \epsilon$). Our upper bound for the nearest neighbor rule does not exactly match this lower bound  since we obtain $n^{-(1+\alpha)/(3+\alpha+d)}$ in a similar situation . At this step, obtaining the appropriate minimax rate requires slight changes inside the construction of the nearest neighbor rule. This is the purpose of the next paragraph.

\subsubsection{Minimax upper bound for an optimal nearest neighbor rule}\label{sec:minimax113}

The upper bound proposed in the theorem can be improved if we change the way in which the regularization parameter $k_n$ is constructed. We use a nearest neighbor algorithm with a number of neighbors that depends on the position of the observation $x$ according to the value of the density $\mu(x)$. More formally, we define for all $j\in \mathbb{N}$
$$
\Omega_{n,0} := \left\lbrace x \in \mathbb{R}^d :  \   \mu(x) \geq n^{\frac{-\alpha}{2+\alpha+d}}  \right\rbrace, $$
and
$$ \Omega_{n,j} = \left\lbrace x \in \mathbb{R}^d :  \  \frac{n^{\frac{-\alpha}{2+\alpha+d}}}{ 2^{j}} \leq \mu(x) < \frac{n^{\frac{-\alpha}{2+\alpha+d}}}{ 2^{j+1}} \right\rbrace.
$$
Setting $k_{n,0} = \lfloor n^{\frac{2}{2+\alpha+d}} \log(n) \rfloor $, we then use for all $j\in \mathbb{N}$
\begin{equation}
k_{n}(x)=\lfloor k_{n,0} 2^{-2j/(2+d)}\rfloor\vee 1  \quad \text{when} \quad x \in \Omega_{n,j}.
\label{eq:optknn}
\end{equation}

According to (\ref{eq:optknn}), the number of neighbors involved in the decision process depends on the spatial position of the input $X$. In some sense, this position is linked to the tail. The statistical performances of the corresponding nearest neighbor classifier is displayed below. Such a construction of this sequence of ``slices" may be interpreted as a spatial adaptive bandwidth selection. This bandwidth is smaller at points $x \in \RR^d$ such that $\mu(x)$ is small. In a sense, this idea is close to the one introduced in \cite{Lepski} that provides a similar slicing procedure to obtain an adaptive minimax density estimation on $\RR^d$.


\begin{theo}\label{theo:Main_opt}  Assume that \textbf{A1}-\textbf{A3} hold and that the Tail Assumption \textbf{A4} is driven by $\psi=Id$. Then, if $\Phi^*_{n,k_n}$ is the classifier associated with (\ref{eq:optknn}), we have:
$$
\sup_{\mathbb{P}_{(X,Y)} \in \mathcal{F}_{L,\alpha} \cap  \PT \cap \blabla} \left[ \mathcal{R}(\Phi^{*}_{n,k_n})- \mathcal{R}(\Phi^{*}) \right] \lesssim  n^{-\frac{(1+\alpha)}{(2+\alpha+d)}}  (\log n)^{\frac{1}{2}+\frac{1}{d}}.
$$
\end{theo}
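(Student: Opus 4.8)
The plan is to run the usual plug-in analysis, but carried out slice by slice on the partition $(\Omega_{n,j})_{j\ge 0}$ defined in \eqref{eq:optknn}, thereby refining the argument behind Theorem \ref{theo:Main2}. The starting point is the classical identity for plug-in rules,
$$
\mathbb{E}\left[\mathcal{R}(\Phi^{*}_{n,k_n}) - \mathcal{R}(\Phi^{*})\right] = 2\int_{\Omega} \left|\eta(x) - \tfrac12\right|\,\mathbb{P}_{\otimes^n}\!\left(\Phi^{*}_{n,k_n}(x) \neq \Phi^{*}(x)\right)\,d\mathbb{P}_X(x),
$$
which reduces everything to a pointwise control of the event $\{\Phi^{*}_{n,k_n}(x)\neq\Phi^{*}(x)\}$; this event forces $|\hat\eta_n(x)-\eta(x)|\ge |\eta(x)-\tfrac12|$. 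I would split $\hat\eta_n(x)-\eta(x)=\big(\hat\eta_n(x)-\bar\eta_n(x)\big)+\big(\bar\eta_n(x)-\eta(x)\big)$ with $\bar\eta_n(x)=k_n(x)^{-1}\sum_{j\le k_n(x)}\eta(X_{(j)}(x))$: Assumption \textbf{A1} gives the deterministic bias bound $|\bar\eta_n(x)-\eta(x)|\le L\,R_n(x)$ with $R_n(x):=\|X_{(k_n(x))}(x)-x\|$, while, conditionally on the ordered positions, $\hat\eta_n(x)-\bar\eta_n(x)$ is an average of independent centred $[0,1]$ variables and is controlled by Hoeffding's inequality.

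The $k$-NN radius $R_n(x)$ is handled through Assumption \textbf{A3}: since $\mathbb{P}_X(B(x,\delta))\ge\kappa\mu(x)\delta^d$ and the number of sample points in $B(x,\delta)$ is Binomial, a Chernoff bound yields $\mathbb{P}\big(R_n(x)>\rho_n(x)\big)\le e^{-c\,k_n(x)}$ for $\rho_n(x):=\big(2k_n(x)/(\kappa\mu(x)n)\big)^{1/d}$. On $\Omega_{n,j}$ one has $\mu(x)\asymp n^{-\alpha/(2+\alpha+d)}2^{-j}$ and $k_n(x)\asymp k_{n,0}2^{-2j/(2+d)}$, so substituting $k_{n,0}=\lfloor n^{2/(2+\alpha+d)}\log n\rfloor$ gives $\rho_n(x)\lesssim n^{-1/(2+\alpha+d)}(\log n)^{1/d}2^{j/(2+d)}$, while the Hoeffding deviation scale needed to make the resulting probability polynomially small is $\asymp\sqrt{\log n/k_n(x)}\asymp n^{-1/(2+\alpha+d)}2^{j/(2+d)}$. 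I then set, on $\Omega_{n,j}$, the confidence radius $\Delta_{n,j}:=C\big(\rho_n(x)+\sqrt{\log n/k_n(x)}\big)\asymp n^{-1/(2+\alpha+d)}(\log n)^{1/d}2^{j/(2+d)}$.

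Next, on each slice I split the integral over $x$ according to whether $|\eta(x)-\tfrac12|$ is below or above $2\Delta_{n,j}$. On the ``hard'' set $H_{n,j}=\{x\in\Omega_{n,j}:0<|\eta(x)-\tfrac12|\le 2\Delta_{n,j}\}$ I bound the misclassification probability by $1$, so its contribution is at most $2\Delta_{n,j}\,\mathbb{P}_X(H_{n,j})$; here $\mathbb{P}_X(H_{n,j})$ is estimated two ways — by the margin Assumption \textbf{A2}, $\mathbb{P}_X(H_{n,j})\le C\Delta_{n,j}^{\alpha}$, and, for $j\ge1$, by the Tail Assumption \textbf{A4} with $\psi=\mathrm{Id}$, $\mathbb{P}_X(H_{n,j})\le\mathbb{P}_X(\{\mu<Cn^{-\alpha/(2+\alpha+d)}2^{-j}\})\lesssim n^{-\alpha/(2+\alpha+d)}2^{-j}$ — and the smaller bound is kept. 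On the ``easy'' set $|\eta(x)-\tfrac12|>2\Delta_{n,j}$, misclassification forces $|\hat\eta_n(x)-\eta(x)|>\Delta_{n,j}$, which by the Hoeffding and Chernoff bounds has probability $\lesssim e^{-ck_n(x)\Delta_{n,j}^2}+e^{-ck_n(x)}\lesssim n^{-2}$ as soon as $k_n(x)\gtrsim\log n$ — valid on all slices $j\le J_n$ with $J_n\asymp\log_2 n^{(2+d)/(2+\alpha+d)}$; for $j>J_n$ the density is $\lesssim n^{-1}$ there, so their total $\mathbb{P}_X$-mass is $\lesssim n^{-1}$ by \textbf{A4}, which is negligible.

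It then remains to sum the slice contributions: the tail estimate on $H_{n,j}$ for $j\ge1$ gives $\lesssim\Delta_{n,j}\,n^{-\alpha/(2+\alpha+d)}2^{-j}\asymp n^{-(1+\alpha)/(2+\alpha+d)}(\log n)^{1/d}2^{-j(1+d)/(2+d)}$, a geometric series summing to $\lesssim n^{-(1+\alpha)/(2+\alpha+d)}(\log n)^{1/d}$, whereas the bulk slice $j=0$ is treated with the margin bound and contributes of order $\Delta_{n,0}^{1+\alpha}$; adding the $O(n^{-1})$ and $O(n^{-2})$ remainders gives the announced bound $n^{-(1+\alpha)/(2+\alpha+d)}(\log n)^{1/2+1/d}$. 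The main obstacle is precisely this logarithmic bookkeeping combined with the slice-by-slice interplay between the margin bound (which \emph{increases} with $j$) and the tail bound (which \emph{decreases} with $j$): the crossover has to be located correctly so that the series over $j$ is dominated by a single term, no spurious factor from the $\asymp\log n$ slices survives, and the $(\log n)^{1/d}$ coming through $k_{n,0}$ in the radius $\rho_n$ combines with the $(\log n)^{1/2}$ coming from the Hoeffding deviation budget to give exactly $(\log n)^{1/2+1/d}$. A secondary technical point is that on the deepest active slices $k_n(x)$ is only of order $\log n$, so the exponential controls of both $R_n(x)$ and the stochastic fluctuation are tight — this is exactly what forces the extra $\log n$ in the definition of $k_{n,0}$.
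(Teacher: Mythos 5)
Your proposal is essentially the paper's own proof: the same slicing $(\Omega_{n,j})_{j\ge0}$ with $k_{n,j}=k_{n,0}2^{-2j/(2+d)}$, the same bias control via Assumption \textbf{A3} (the $k$-NN radius bound, which is exactly Proposition \ref{prop:Delta2}) combined with Hoeffding (Proposition \ref{prop:use}), the tail assumption $\psi=Id$ on each slice $j\ge1$, the margin bound on the bulk slice, the slices with $\mu\lesssim n^{-1}$ handled by their total mass, and the same geometric summation with the key observation that $k_{n,j}\Delta_{n,j}^2\asymp(\log n)^{1+2/d}$ is independent of $j$. Two caveats, both of the same magnitude as the paper's own looseness. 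First, your per-point claim that the radius event has probability $e^{-ck_n(x)}\lesssim n^{-2}$ "as soon as $k_n(x)\gtrsim\log n$" needs the constant in "$\gtrsim$" to be large relative to $c$ (on the deepest active slices $k_{n,j}\asymp\log n$ and $c$ is small, e.g.\ $3/14$ in Proposition \ref{prop:Delta2}); this is harmless because the $\mathbb{P}_X$-mass of those slices is itself $\lesssim n^{-1}$, or because, as in the paper, the term $e^{-ck_{n,j}}$ can be absorbed into the bias and compared to $\Delta_{n,j}\asymp(\log n)^{1/2+1/d}k_{n,j}^{-1/2}$. Second, your bulk-slice term is $\Delta_{n,0}^{1+\alpha}\asymp n^{-(1+\alpha)/(2+\alpha+d)}(\log n)^{(1+\alpha)/d}$, which is below the announced $(\log n)^{1/2+1/d}$ only when $\alpha\le d/2$; for $\alpha>d/2$ your own estimates deliver the rate with log exponent $(1+\alpha)/d$ instead. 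The paper's proof has exactly the same issue (it imports the bound of Theorem \ref{theo:Main2} for $T_n$ even though $k_{n,0}$ in \eqref{eq:optknn} carries an extra $\log n$, which inflates the bias on $\Omega_{n,0}$ by $(\log n)^{1/d}$), and it explicitly waves the log factor away as removable by additional technicalities, so this does not change the polynomial rate, but your final sentence claiming the exact exponent $(\log n)^{1/2+1/d}$ does not follow from the terms you computed in full generality.
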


We stress that the upper bound obtained in Theorem \ref{theo:Main_opt} nearly matches the lower bound proposed in Theorem \ref{theo:NON2}, up to a log-term. This log-term can be removed by the use of additional technicalities that are omitted in our proof. Hence, Theorems \ref{theo:Main_opt} and \ref{theo:NON2} make it possible to identify the exact minimax rate of classification when the Tail Assumption is driven by $\psi=Id$, that is:
$$
\inf_{\Phi} \sup_{\mathbb{P}_{(X,Y)} \in \mathcal{F}_{L,\alpha} \cap  \PT \cap \blabla}  \left[ \mathcal{R}(\Phi^{*}_{n,k_n})- \mathcal{R}(\Phi^{*}) \right]  \sim n^{-\frac{1+\alpha}{2+\alpha+d}}.
$$

\subsubsection{Generalizations}  

We propose several extensions of our previous results (lower and upper bounds) for more general tails of distribution. We also propose to enlighten the \textit{Minimal Mass Assumption} $\blabla$.

\paragraph{Effect of the tail: from  $\PT$ to $\PTf$}

\begin{theo}\label{theo:Gene1} 
Assume that Assumptions \textbf{A1-A4} hold. For any tail $\mathcal{T}$ parameterized by a function $\psi$, we obtain the following results:
\begin{itemize}
\item[i)] \underline{Lower bound:} the minimax classification rate satisfies:
$$
\inf_{\Phi_n} \sup_{\mathbb{P}_{(X,Y)} \in \mathcal{F}_{L,\alpha} \cap \PTf \cap \blabla } \left[ \mathcal{R}(\Phi_n)- \mathcal{R}(\Phi^{*}) \right] \gtrsim \epsilon_{n,\alpha,d}^{1+\alpha},
$$
where $\epsilon_{n,\alpha,d}$ satisfies the balance
\begin{equation}\label{eq:ratelow}
n^{-1} = \{\epsilon_{n,\alpha,d}\}^{2+d} \times \psi^{-1}\left(\{\epsilon_{n,\alpha,d}\}^\alpha\right).
\end{equation}
\item[ii)] \underline{Upper bound:} the nearest neighbor rule satisfies
$$
\sup_{\mathbb{P}_{(X,Y)} \in \mathcal{F}_{L,\alpha} \cap  \PTf \cap \blabla} \left[ \mathcal{R}(\Phi_{n,k_n})- \mathcal{R}(\Phi^{*}) \right] \leq C \nu_{n,\alpha,d}^{1+\alpha} 
$$
with $k_n = \nu_{n,\alpha,d}^{-2}$ where $\nu_{n,\alpha,d}$ fulfills the balance:
\begin{equation}\label{eq:rates}
n^{-1} =  \psi^{-1}(\{\nu_{n,\alpha,d}\}^{1+\alpha}) \{\nu_{n,\alpha,d}\}^{2+d}.
\end{equation}
\end{itemize}
\end{theo}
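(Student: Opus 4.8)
The plan is to obtain both items by re-running, with recalibrated parameters, the two arguments already available when $\psi=\mathrm{Id}$: the Assouad-type construction behind Theorem~\ref{theo:NON2} for the lower bound (i), and the plug-in analysis behind Theorem~\ref{theo:Main2} for the upper bound (ii). Observe first that, $\psi$ being increasing near $0$ with $\psi(0^+)=0$, its generalised inverse $\psi^{-1}$ is well defined on a right neighbourhood of $0$, so the balances (\ref{eq:ratelow}) and (\ref{eq:rates}) determine sequences $\epsilon_{n,\alpha,d}\downarrow 0$ and $\nu_{n,\alpha,d}\downarrow 0$; one also checks from (\ref{eq:rates}) that $k_n=\nu_{n,\alpha,d}^{-2}\to\infty$ and $k_n=o(n)$, so it is an admissible bandwidth.

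\emph{Lower bound.} I would build an Assouad hypercube $\{\mathbb{P}^{\sigma}_{(X,Y)}:\sigma\in\{0,1\}^{m}\}$ exactly as in the proof of Theorem~\ref{theo:NON2}, with one marginal density $\mu$ shared by all $\sigma$ and only the regression function $\eta_\sigma$ varying. Fix the perturbation radius $\delta:=\epsilon_{n,\alpha,d}$ and the tail level $w:=\psi^{-1}(\delta^{\alpha})$; choose $\mu$ --- a mild tail-modification of the Laplace/Gaussian-type densities of the examples, so that $\mu\in\blabla\cap\PTf$ --- to be essentially flat and of order $w$ over a region of $\Omega$ carrying $X$-mass of order $\psi(w)=\delta^{\alpha}$, which therefore accommodates $m\asymp\delta^{\alpha}/(w\delta^{d})$ disjoint balls $B(x_\ell,\delta)$. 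Each $\eta_\sigma$ equals $1/2$ outside these balls and, inside $B(x_\ell,\delta)$, a bump of height $\asymp\delta$ whose sign is prescribed by $\sigma_\ell$; this keeps $\eta_\sigma$ $L$-Lipschitz (Assumption~\textbf{A1}), leaves $\mu$ and hence the memberships $\mu\in\blabla\cap\PTf$ untouched for every $\sigma$, and the very choice $\psi(w)=\delta^{\alpha}$ makes the margin Assumption~\textbf{A2} hold, the total bump mass being $\asymp m\,w\,\delta^{d}=\delta^{\alpha}$. The per-coordinate Kullback--Leibler divergence between neighbouring $n$-fold product laws is $\asymp n\,(w\delta^{d})\,\delta^{2}=n\,w\,\delta^{2+d}$, which is $O(1)$ precisely because (\ref{eq:ratelow}) reads $n^{-1}=w\,\delta^{2+d}$; Assouad's Lemma (as in the proof of Theorem~\ref{theo:opt}) then delivers a minimax excess risk $\gtrsim m\,(w\delta^{d})\,\delta=\psi(w)\,\delta\asymp\delta^{1+\alpha}=\epsilon_{n,\alpha,d}^{1+\alpha}$.

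\emph{Upper bound.} Here I would follow closely the proof of Theorem~\ref{theo:Main2}, changing only its truncation level and bandwidth. Write the excess risk of $\Phi_{n,k_n}$ as $\int_{\Omega}|2\eta(x)-1|\,\mathbb{P}\big(\hat\eta_n(x)\text{ and }\eta(x)\text{ lie on opposite sides of }1/2\big)\,\mu(x)\,dx$ and split $\Omega=\{\mu\ge\tau_n\}\cup\{\mu<\tau_n\}$ with $\tau_n:=\psi^{-1}(\nu_{n,\alpha,d}^{1+\alpha})$. On $\{\mu<\tau_n\}$ the integrand is at most $1$, so Assumption~\textbf{A4} bounds this contribution by $\mathbb{P}_X(\mu<\tau_n)\le\psi(\tau_n)=\nu_{n,\alpha,d}^{1+\alpha}$. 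On $\{\mu\ge\tau_n\}$, split $\hat\eta_n(x)-\eta(x)$ into a bias $\lesssim L\,R_{k_n}(x)$, where $R_{k_n}(x)$ is the distance from $x$ to its $k_n$-th neighbour, and a stochastic term $\lesssim k_n^{-1/2}$ controlled by Hoeffding's inequality conditionally on the positions; Assumption~\textbf{A3} gives $\mathbb{P}_X(B(x,\rho))\ge\kappa\mu(x)\rho^{d}\ge\kappa\tau_n\rho^{d}$, whence $R_{k_n}(x)\lesssim(k_n/(n\tau_n))^{1/d}$ with overwhelming probability. With $k_n=\nu_{n,\alpha,d}^{-2}$ one gets $k_n^{-1/2}=\nu_{n,\alpha,d}$, and since (\ref{eq:rates}) yields $1/(n\,\psi^{-1}(\nu_{n,\alpha,d}^{1+\alpha}))=\nu_{n,\alpha,d}^{2+d}$, hence $k_n/(n\tau_n)=\nu_{n,\alpha,d}^{-2}\,\nu_{n,\alpha,d}^{2+d}=\nu_{n,\alpha,d}^{d}$, the bias term is also $\lesssim\nu_{n,\alpha,d}$, so $|\hat\eta_n-\eta|\lesssim\nu_{n,\alpha,d}$ uniformly on $\{\mu\ge\tau_n\}$. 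Splitting that stratum according to whether $|\eta(x)-1/2|$ is below or above a large multiple of $\nu_{n,\alpha,d}$, bounding the first part via the margin Assumption~\textbf{A2} ($\mathbb{P}_X(0<|\eta-1/2|<c\,\nu_{n,\alpha,d})\lesssim\nu_{n,\alpha,d}^{\alpha}$) and the second via the exponential decay of the sign-error probability, the contribution of $\{\mu\ge\tau_n\}$ is $\lesssim\nu_{n,\alpha,d}^{1+\alpha}$; adding the two strata yields the claimed $C\,\nu_{n,\alpha,d}^{1+\alpha}$.

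\emph{Main difficulty.} The only genuinely new ingredient, beyond the $\psi=\mathrm{Id}$ proofs, is in (i): one must produce a \emph{single} density $\mu$ on $\RR^{d}$ that simultaneously (a) lies in $\blabla$, i.e.\ satisfies the minimal-mass lower bound at \emph{every} point of $\Omega$ --- the non-local requirement emphasised after Assumption~\textbf{A3} --- (b) lies in $\PTf$ with the prescribed $\psi$, and (c) is flat at level $\psi^{-1}(\delta^{\alpha})$ over a set of $X$-mass $\asymp\delta^{\alpha}$, large enough to host $\asymp\delta^{\alpha}/(\psi^{-1}(\delta^{\alpha})\delta^{d})$ bumps while keeping Assumption~\textbf{A2} satisfied. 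Reconciling (a), (b) and (c) --- equivalently, understanding how $\psi$, the required number of bumps and the margin interact through the balance (\ref{eq:ratelow}) --- is the delicate step; once such a $\mu$ is in hand, the remainder of (i) and all of (ii) are routine bookkeeping on top of the proofs of Theorems~\ref{theo:NON2} and~\ref{theo:Main2}.
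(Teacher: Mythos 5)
Your proposal is correct and follows essentially the same route as the paper: part i) is the Assouad network of Section \ref{sec:proofNON2} with the recalibration $\delta=1/q=\epsilon_{n,\alpha,d}$, density level $\psi^{-1}(\delta^{\alpha})$ on $m\asymp\delta^{\alpha}/(\psi^{-1}(\delta^{\alpha})\delta^{d})$ bump balls (the paper realizes your ``flat'' density simply as a piecewise-constant one and checks $\blabla$ and $\PTf$ directly, which resolves the difficulty you flag), and part ii) is the proof of Theorem \ref{theo:Main2} with the threshold $n^{-\gamma}$ replaced by $a_n=1/(n\nu_n^{2+d})$, which under the balance \eqref{eq:rates} is exactly your $\tau_n=\psi^{-1}(\nu_n^{1+\alpha})$.
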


It would also be possible to propose some generalizations using the sliced nearest neighbor rule presented in Sections \ref{sec:sliced} and \ref{sec:minimax113} for  tails driven by a general function $\psi$, even if we do not include this additional result for the purpose of clarity.

\paragraph{Meeting the Minimal Mass Assumption $\bla$}
We now obtain similar rates when using the weaker assumption $\bla$ instead of $\blabla$: the lower bounds of $\mu(B(x,\delta))$ are only useful for some  points $x$ such that $\mu(x)$ is large enough. We can state the next corollary.
\begin{cor} Assume that \textbf{A1},\textbf{A2},\textbf{A4} hold and  $P_{(X,Y)} \in \bla$, then
$$
\sup_{\mathbb{P}_{(X,Y)} \in \mathcal{F}_{L,\alpha} \cap  \PTf \cap \bla} \left[ \mathcal{R}(\Phi_{n,k_n})- \mathcal{R}(\Phi^{*}) \right] \lesssim \nu_{n,\alpha,d}^{1+\alpha} ,
$$
with $k_n = \nu_{n,\alpha,d}^{-2}$ where $\nu_{n,\alpha,d}$ satisfies the balance
$$
n^{-1} =  \psi^{-1}(\{\nu_{n,\alpha,d}\}^{1+\alpha}) \{\nu_{n,\alpha,d}\}^{2+d}.
$$
\end{cor}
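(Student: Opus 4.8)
The statement to prove is the Corollary: under \textbf{A1}, \textbf{A2}, \textbf{A4}, and $\PP_{(X,Y)} \in \bla$, the nearest neighbor rule $\Phi_{n,k_n}$ with $k_n = \nu_{n,\alpha,d}^{-2}$ satisfies the same upper bound $\nu_{n,\alpha,d}^{1+\alpha}$ as in Theorem \ref{theo:Gene1}ii). The guiding idea is that the proof of the upper bound in Theorem \ref{theo:Gene1}ii) never really needs the lower bound $\PP_X(B(x,\delta)) \geq \kappa \mu(x)\delta^d$ to hold for \emph{all} $x \in \Omega$: it only uses it at points $x$ where $\mu(x)$ is not too small, because the contribution of the region $\{\mu \text{ very small}\}$ to the excess risk is controlled separately by the Tail Assumption \textbf{A4}. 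So the plan is to revisit that proof and check that every invocation of \textbf{A3} can be downgraded to an invocation of $\bla$.

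\textbf{Step 1: decompose the excess risk.} Following the standard plug-in route (as in the proof of Theorem \ref{theo:Main2}), write
$$
\mathcal{R}(\Phi_{n,k_n}) - \mathcal{R}(\Phi^{*}) \lesssim \int_{\Omega} \left|\eta(x) - \tfrac12\right| \PP\left(\widehat\eta_n(x) \text{ and } \eta(x) \text{ lie on opposite sides of } \tfrac12\right) \mu(x)\, dx,
$$
and split $\Omega = \Omega^{\flat} \cup \Omega^{\sharp}$ where $\Omega^{\flat} := \{x : \mu(x) < e^{-C\delta_n^{-\rho}}\}$ is the ``bad tail'' (with $\delta_n$ the typical radius of the $k_n$-NN ball, $\delta_n \sim (k_n/n)^{1/d}$ up to the local density) and $\Omega^{\sharp}$ its complement. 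On $\Omega^{\sharp}$ the implication defining $\bla$ is active, so $\PP_X(B(x,\delta)) \geq \kappa\mu(x)\delta^d$ holds for all $\delta \leq \delta_0$, exactly as \textbf{A3} would give; hence the bias/variance analysis of $\widehat\eta_n$ from the proof of Theorem \ref{theo:Gene1}ii) applies verbatim on $\Omega^{\sharp}$, and yields the bound $\nu_{n,\alpha,d}^{1+\alpha}$ after integrating against the margin assumption \textbf{A2}.

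\textbf{Step 2: control the bad tail $\Omega^{\flat}$.} Here the contribution is bounded crudely by $\PP_X(\Omega^{\flat}) = \PP_X(\{\mu < e^{-C\delta_n^{-\rho}}\}) \leq \psi(e^{-C\delta_n^{-\rho}})$ using \textbf{A4}, since $|\eta - 1/2| \leq 1/2$ and probabilities are at most $1$. One then checks that with $\delta_n$ polynomially small in $n$ and $k_n = \nu_{n,\alpha,d}^{-2}$, the quantity $e^{-C\delta_n^{-\rho}}$ is super-polynomially small, so $\psi(e^{-C\delta_n^{-\rho}})$ is negligible compared to $\nu_{n,\alpha,d}^{1+\alpha}$ (which is only polynomially small). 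Combining Steps 1 and 2 gives the claimed rate.

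\textbf{Main obstacle.} The delicate point is Step 1 on $\Omega^{\sharp}$: one must verify that the event ``$x \in \Omega^{\sharp}$ forces the $k_n$-th nearest neighbor of $x$ to fall within distance $\delta_0$ (so that the $\bla$ implication can be invoked)'' holds with overwhelming probability, and that the boundary zone where $\mu(x)$ hovers near $e^{-C\delta_n^{-\rho}}$ — neither comfortably in $\Omega^{\flat}$ nor comfortably in $\Omega^{\sharp}$ — does not spoil the estimate. This requires that the threshold defining $\Omega^{\flat}$ be chosen consistently with the local bandwidth $\delta_n(x)$, and a short concentration argument (a binomial tail bound on the number of sample points in $B(x,\delta_n(x))$, as in the proof of Theorem \ref{theo:Main2}) to show that the empirical $k_n$-NN radius is, with high probability, comparable to its deterministic surrogate. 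Once this matching is in place, the rest is the already-established computation, so I would present Step 1 by carefully pointing to the places in the proof of Theorem \ref{theo:Gene1}ii) where \textbf{A3} is used and remarking that each such use occurs at a point of $\Omega^{\sharp}$, and I would only spell out in detail the treatment of the boundary zone and the tail bound in Step 2.
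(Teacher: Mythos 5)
Your guiding idea is the intended one (the minimal mass lower bound is only ever invoked at points where $\mu$ is not too small, and the tail assumption pays for the rest), but the execution has a genuine gap: you split at the wrong threshold, and the central claim of your Step 1 fails on an intermediate region. Concretely, the place where the minimal mass assumption enters the upper-bound proof is Proposition \ref{prop:Delta2}, whose bias bound on $K_a=\{\mu\geq a\}$ scales as $\left(\frac{k_n}{na}\right)^{1/d}$. To get a bias of order $\nu_{n,\alpha,d}$ with $k_n=\nu_{n,\alpha,d}^{-2}$ you need $a$ of order $a_n=\frac{1}{n\,\nu_{n,\alpha,d}^{2+d}}$, which is only \emph{polynomially} small. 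If, as in your Step 1, you declare the good region to be $\Omega^{\sharp}=\{\mu\geq e^{-C\delta_n^{-\rho}}\}$, then $\Omega^{\sharp}$ contains points where $\mu(x)$ is super-polynomially small; at such points the inequality $\mathbb{P}_X(B(x,\delta))\geq\kappa\mu(x)\delta^d$ is indeed available but useless, because the radius needed to capture $k_n$ points, $t\sim\left(\frac{k_n}{n\kappa\mu(x)}\right)^{1/d}$, is no longer small and the bias term $Lt$ is not $O(\nu_{n,\alpha,d})$. So the assertion that ``the bias/variance analysis applies verbatim on $\Omega^{\sharp}$ and yields $\nu_{n,\alpha,d}^{1+\alpha}$'' is false on the strip $\{e^{-C\delta_n^{-\rho}}\leq\mu<a_n\}$, and your Step 2 does not cover that strip either, since it only controls $\{\mu< e^{-C\delta_n^{-\rho}}\}$.

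The repair is to keep exactly the decomposition of the proof of Theorem \ref{theo:Gene1} ii) (equivalently Theorem \ref{theo:Main2}): split at the polynomial threshold $a_n=\frac{1}{n\nu_{n,\alpha,d}^{2+d}}=\psi^{-1}(\nu_{n,\alpha,d}^{1+\alpha})$. On $\{\mu<a_n\}$ the Tail Assumption gives a contribution $\psi(a_n)=\nu_{n,\alpha,d}^{1+\alpha}$ — this is the dominant tail term of the rate, not a negligible remainder as in your Step 2. On $\{\mu\geq a_n\}$ the only point requiring verification is that the single application of the minimal mass bound inside Proposition \ref{prop:Delta2}, which occurs at the radius $t=\left(\frac{2k_n}{n\kappa a_n}\right)^{1/d}\sim\nu_{n,\alpha,d}$, is licensed by $\bla$: since $a_n$ is polynomially small while $e^{-Ct^{-\rho}}\sim e^{-C'\nu_{n,\alpha,d}^{-\rho}}$ is super-polynomially small, every $x$ with $\mu(x)\geq a_n$ satisfies $\mu(x)\geq e^{-Ct^{-\rho}}$ for $n$ large, so $\mathbb{P}_X(B(x,t))\geq\kappa\mu(x)t^d$ holds where it is needed (also note that $\bla$ gives the bound only for radii $\delta$ with $\mu(x)\geq e^{-C\delta^{-\rho}}$, not ``for all $\delta\leq\delta_0$'' as you state). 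With that observation the rest of the proof of Theorem \ref{theo:Gene1} ii) goes through unchanged and yields the corollary; no separate high-probability comparison of the empirical nearest-neighbor radius with a deterministic surrogate is needed beyond what Proposition \ref{prop:Delta2} already contains.
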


 The condition $\blabla$ cannot be easily described through an analytical condition because of its uniform nature over $\Omega$. In contrast, $\bla$ is more tractable in view of the criterion given by the next result (Proposition \ref{prop:sufficient}). Using a log-density model, we write the density $\mu$ as
$$
\mu(x) =e^{-\varphi(x)}, \forall x \in \RR^d.
$$

\begin{prop}\label{prop:sufficient}
Let $\varphi \in \mathcal{C}^{1}(\Omega)$ and assume that a real number $a>0$ exists  such that:
$$
\lim_{ x : \mu(x) \longrightarrow 0}  \frac{\| \nabla \varphi(x) \| }{\varphi(x)^a} = 0,
$$
then a suitable $\kappa$ can be found such that $\mu=e^{-\varphi} \in \bla$.
 \end{prop}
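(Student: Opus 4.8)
The plan is to fix a point $x_0 \in \Omega$ with $\mu(x_0)$ small (say $\mu(x_0) \leq \epsilon_1$ for a threshold $\epsilon_1$ to be chosen, which amounts to $\varphi(x_0) \geq \varphi_1 := \log(1/\epsilon_1)$ large), fix $\delta \leq \delta_0$ satisfying the threshold condition $\mu(x_0) \geq e^{-C\delta^{-\rho}}$ from the definition of $\bla$, and estimate $\mathbb{P}_X(B(x_0,\delta)) = \int_{B(x_0,\delta)} e^{-\varphi(z)}\, dz$ from below by controlling how much $\varphi$ can grow on $B(x_0,\delta)$. The gradient hypothesis $\|\nabla\varphi(x)\|/\varphi(x)^a \to 0$ as $\mu(x)\to 0$ is exactly what bounds this oscillation: for any $z$ on the segment $[x_0,z]$ (convexity of balls makes this legitimate), $\varphi(z) - \varphi(x_0) = \int_0^1 \nabla\varphi(x_0 + t(z-x_0))\cdot(z-x_0)\,dt$, so $|\varphi(z)-\varphi(x_0)| \leq \delta \sup_{w \in B(x_0,\delta)} \|\nabla\varphi(w)\|$.

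The key step is to turn this into a multiplicative control $\varphi(z) \leq 2\varphi(x_0)$ (say) on all of $B(x_0,\delta)$, provided $\delta$ is small enough relative to $\varphi(x_0)$ and $\varphi(x_0)$ is large enough. Write $M(x_0,\delta) := \sup_{w\in B(x_0,\delta)}\|\nabla\varphi(w)\|$; the hypothesis gives, for $\varphi(x_0)$ large, $M(x_0,\delta) \leq o(1)\cdot \big(\sup_{B(x_0,\delta)}\varphi\big)^a$. A standard continuity/bootstrap argument (let $r^* := \sup\{r \le \delta : \varphi \le 2\varphi(x_0) \text{ on } B(x_0,r)\}$ and show $r^* = \delta$) then yields $\sup_{B(x_0,\delta)}\varphi \le 2\varphi(x_0)$ as soon as $\delta \cdot o(1) \cdot (2\varphi(x_0))^a \le \varphi(x_0)$, i.e. $\delta \le c\,\varphi(x_0)^{1-a}/o(1)$. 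Here I would need to split into cases according to whether $a \le 1$ or $a > 1$; when $a>1$ the factor $\varphi(x_0)^{1-a}$ shrinks, but the threshold $\delta \le \delta_0$ together with $\mu(x_0)\ge e^{-C\delta^{-\rho}}$ — which forces $\delta \ge (C/\varphi(x_0))^{1/\rho}$, hence only polynomially small in $1/\varphi(x_0)$ — must be shown compatible with the required smallness of $\delta$; this is where the freedom to pick $\rho$ and $C$ in the definition of $\bla$ is used, and it is the main obstacle. One chooses $\rho$ so that $(C/\varphi(x_0))^{1/\rho}$ decays faster than $\varphi(x_0)^{1-a}$ (possible for any fixed $a$ by taking $\rho$ large), and $\delta_0, \epsilon_1$ small enough that the $o(1)$ from the hypothesis is below a fixed constant for all $x$ with $\mu(x) \le \epsilon_1$.

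Once $\varphi \le 2\varphi(x_0)$ on $B(x_0,\delta)$ is established, conclude
$$
\mathbb{P}_X(B(x_0,\delta)) = \int_{B(x_0,\delta)} e^{-\varphi(z)}\,dz \ge \delta \cdot M(x_0,\delta) \text{-independent bound};
$$
more precisely, refine the gradient estimate to get $\varphi(z) \le \varphi(x_0) + \delta M(x_0,\delta)$ with $M(x_0,\delta) \le o(1)(2\varphi(x_0))^a$, so that $\delta M(x_0,\delta) \le o(1)\cdot\delta(2\varphi(x_0))^a \le C'$ a bounded constant (again by the choice of $\rho$ making $\delta$ small enough to kill the $\varphi(x_0)^a$ growth), whence $e^{-\varphi(z)} \ge e^{-C'} e^{-\varphi(x_0)} = e^{-C'}\mu(x_0)$ on $B(x_0,\delta)$. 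Integrating over the ball gives $\mathbb{P}_X(B(x_0,\delta)) \ge e^{-C'}\gamma_d \mu(x_0)\delta^d$, which is the desired inequality with $\kappa := e^{-C'}\gamma_d$ (where $\gamma_d = \lambda(B(0,1))$). For the remaining points $x$ with $\mu(x) > \epsilon_1$ — which are not even required by the hypothesis defining $\bla$, since the implication there is vacuous unless $\mu(x) \ge e^{-C\delta^{-\rho}}$ with $\delta$ small, forcing $\mu(x)$ small — nothing further is needed, so the argument is complete. The verification that all the smallness choices ($\epsilon_1$, $\delta_0$, $\rho$, $C$) can be made simultaneously and consistently is the only delicate bookkeeping, and should be presented as a single lemma fixing these constants up front.
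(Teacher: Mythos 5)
Your core mechanism is the right one and is essentially the paper's: control the oscillation of $\varphi$ on $B(x_0,\delta)$ by $\delta \sup_{B(x_0,\delta)}\|\nabla\varphi\|$, bound the gradient by a power of $\varphi$ using the hypothesis, and arrange that $\delta\,\varphi(x_0)^a$ stays bounded so that $e^{-\varphi(z)}\gtrsim \mu(x_0)$ on the ball and integration gives $\mathbb{P}_X(B(x_0,\delta))\geq \kappa\mu(x_0)\delta^d$. (The paper does the averaging step with Jensen's inequality applied to the normalized Lebesgue measure on the ball rather than a sup bound; your bootstrap passing from $\sup_{B}\varphi$ back to $\varphi(x_0)$ is a legitimate, arguably more careful, variant of the same estimate.) However, you have misread the definition of $\bla$ in two places, and both misreadings are load-bearing. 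First, the premise $\mu(x_0)\geq e^{-C\delta^{-\rho}}$ is equivalent to $\varphi(x_0)\leq C\delta^{-\rho}$, i.e.\ to the \emph{upper} bound $\delta\leq (C/\varphi(x_0))^{1/\rho}$, not the lower bound $\delta\geq (C/\varphi(x_0))^{1/\rho}$ that you treat as the main obstacle. That obstacle therefore does not exist: the restriction built into $\bla$ is precisely the smallness of $\delta$ relative to $\varphi(x_0)$ that your argument needs, and the correct calibration is $\rho=1/a$ (the paper's choice; any $\rho\leq 1/a$ works), which turns the premise into $\delta\,\varphi(x_0)^a\leq C^a$ and feeds directly into both your bootstrap condition and your final bound, with no case split between $a\leq 1$ and $a>1$. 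Your prescription to take $\rho$ large goes the wrong way twice over: even within your own framing it makes $(C/\varphi)^{1/\rho}$ decay more slowly, and under the correct reading enlarging $\rho$ only enlarges the set of pairs $(x,\delta)$ for which the minimal-mass bound must be verified.

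Second, the points with $\mu(x)>\epsilon_1$ are not exempt. The premise $\mu(x)\geq e^{-C\delta^{-\rho}}$ is a \emph{lower} bound on $\mu(x)$, so every point where $\mu$ is not extremely small --- in particular every point with $\mu(x)>\epsilon_1$ --- satisfies it, and the conclusion $\mathbb{P}_X(B(x,\delta))\geq \kappa\mu(x)\delta^d$ must be proved there as well. Your tail hypothesis gives no gradient control in that region, so the argument as written leaves it uncovered. The paper handles this by first upgrading the limit hypothesis to a global bound $\|\nabla\varphi(z)\|\leq C_a(1+\varphi(z)^a)$ on $\Omega$ for a large enough constant $C_a$, and then running the same estimate uniformly for all $x$ with $\varphi(x)\leq \delta^{-1/a}$; you need this step (or an equivalent bound on $\|\nabla\varphi\|$ over $\{\mu\geq \epsilon_1\}$) to close the proof. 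With these two corrections your plan coincides, up to the Jensen-versus-sup bookkeeping, with the paper's argument and produces a constant $\kappa$ of the same form.
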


\begin{proof}
For any $\delta >0$, we compute a lower bound of
$$
\mathbb{P}_X\left(B(x,\delta)\right) = \int_{B(x,\delta)} e^{- \varphi(z)} dz.
$$
The Jensen Inequality applied to the normalized Lebesgue measure over $B(x,t)$, which is denoted $\bar{d}z$,   yields
\begin{equation}\label{eq:tempo}
 \int_{B(x,\delta)} e^{- \varphi(z)} dz \geq \frac{\pi^{d/2} \delta^{d}}{\Gamma(d/2+1)} \exp \left( - \varphi(x) + \int_{B(x,\delta)} [\varphi(z) - \varphi(x)] \bar{d}z  \right).
\end{equation}
A first order Taylor expansion leads to
$$
\int_{B(x,\delta)} [\varphi(z) - \varphi(x)] \bar{d}z  \leq \sup_{z \in B(x,\delta)} \|\nabla \varphi(z)   \| \int_{{B(x,\delta)}} \|z-x\| \bar{d}z  \leq    \delta \sup_{z \in B(x,\delta)} \|\nabla \varphi(z)   \|.
$$
Now, our assumption on $\varphi$ implies that a large enough $C_a$ exists such that:
$$
\|\nabla \varphi(z)\| \leq C_a (1+\varphi(z)^a).
$$
Thus, the lower bound \eqref{eq:tempo} becomes:
$$
\int_{B(x,\delta)} e^{- \varphi(z)} dz \geq \frac{\pi^{d/2} \delta^{d}}{\Gamma(d/2+1)} e^{ - \varphi(x) }  e^{-C_a \delta (1+\sup_{z \in B(x,\delta)} \varphi^a(z))}.
$$
It is now sufficient to consider points $x$ such that $\varphi \leq \delta^{-1/a}$ (equivalent to $\mu \geq e^{-\delta^{-1/a}}$) to obtain a meaningful lower bound Hence, 
$\bla$ is satisfied choosing 
$$\rho=1/a  \qquad \text{and} \qquad \kappa = \frac{\pi^{d/2}}{2 \Gamma(d/2+1)} e^{-C_a}.$$
\end{proof}

\subsection{Practical settings on typical examples \label{sec:examples}}

The aim of this section is to illustrate the results obtained above. We first describe a location model for which we can derive explicit upper and lower bounds in several different cases. We then propose a small numerical study in order to enhance the discussion regarding the importance of the Tail Assumption and we conclude by drawing a comparison between the standard nearest neighbor  and sliced nearest neighbor rules.

\paragraph{Explicit rates for specific location models}

We investigate here the influence of the function $\psi$ in $\PTf$ as well as the one of the margin parameter on the convergence rates through several specific location models. These models are defined as follows: given any positive random variable $Z$
(whose cumulative distribution function is denoted as $F$) and two real location values $a$ and $b$, the random variable $X$ is given by:
\begin{equation}\label{eq:model_location}
X=\epsilon Z + Y b+(1-Y)a,
\end{equation}
where $\epsilon$ is a Rademacher random variable (whose values is $\pm 1$) independent of $Z$, and $Y$ is the label of the observation, sampled independently of $\epsilon$ and $Z$ with a Bernoulli law $\mathcal{B}(1/2)$. Using a translation invariance argument, it is enough in the next study to consider $a=0$ and $b>0$. Table \ref{table:gloss} illustrates the rate reached by the nearest neighbor procedure in each situation.

\begin{table}[h!]
\begin{center}
\begin{tabular}{|l||c|c|c|c|}
\hline 
Law of $Z$ &Tail $\psi$ & Margin & $k_n \sim n^{\beta}$ &  Upper bound\\
\hline 
Gauss & $\psi(\epsilon) \propto \epsilon \log(1/\epsilon)^{r/2-1}$ & $\alpha = 1$ & $ \beta = 2/(4+d)$ & $n^{-2/(4+d)} \log(n)^{\beta(r)}$\\ \vspace{0.2em}
Laplace & $\psi(\epsilon) \propto\epsilon$& $\alpha=1$ & $ \beta = 2/(4+d)$ &$n^{-2/(4+d)}$\\\vspace{0.2em}
Gamma & $\psi(\epsilon) \propto \epsilon \log(1/\epsilon)^{k-1}$& $\alpha=1$ & $ \beta = 2/(4+d)$ & $n^{-2/(4+d)} \log(n)^{\beta(k)}$\\\vspace{0.2em}
Cauchy & $\psi(\epsilon) \propto\sqrt{\epsilon}$&$\alpha=1$& $ \beta = 1/(3+d)$&$n^{-2/(3+d)}$\\\vspace{0.2em}
Power-Pareto & $ \psi(\epsilon) \propto \epsilon^{p/(p+1)}$ & $\alpha= 1 \wedge p$
&$ \beta = \frac{2(p+1)}{p(3+\alpha+d)+2+d}$ & $n^{\frac{-4(p+1)}{p(3+\alpha+d)+2+d}}$\\
\hline
\end{tabular}
\caption{\label{table:gloss}Convergence rates for location models with several tail sizes.}
\end{center}
\end{table}

\paragraph{A numerical study for 'power laws'}  In order to illustrate Equations \eqref{eq:ratelow} and \eqref{eq:rates}, we consider some specific cases of  ``power laws" such that:
$$
\mathbb{P}_X(\mu(X)<\epsilon) = \psi(\epsilon) \sim \epsilon^g \quad \text{when} \quad \epsilon \longrightarrow 0^{+},
$$
for some $g>0$. In this case, the upper bound on the Nearest Neighbor classifier is given by
$$
 \mathcal{R}(\Phi_n) - \mathcal{R}(\Phi^{*}) \lesssim n^{-\frac{(1+\alpha)}{1+\alpha + \frac{2+d}{g}  }}
$$
although the lower bound derived from \eqref{eq:ratelow} is:
$$
\inf_{\Phi_n} \sup_{\mathbb{P}_{(X,Y)} \in \mathcal{F}_{L,\alpha} \cap   \PTf \cap \blabla} \left[ \mathcal{R}(\Phi_n)- \mathcal{R}(\Phi^{*}) \right] \gtrsim n^{-\frac{(1+\alpha)}{\alpha + \frac{2+d}{g}}}
$$
We immediately observe that the classification rates are seriously damaged when $g$ is small. In contrast, for very thin tails, the rate can be arbitrarily close to $n^{-1}$. For this purpose, we illustrate this phenomenon with a family of distributions $\mathcal{P}_{g}$, where the parameter $g>0$ influences the tail size. 
We define the cumulative distribution function of the positive random variable $Z$:
$$
\forall t \geq 0 \qquad F_g(t) = 1-\frac{1}{(t+1)^{g}}.
$$
Then, for two real values $(a,b)$, we sample $n$ observations $(X_i,Y_i)$ according to the previous model and the Bayes classifier is given by:
$$\Phi^{*}(X) = \mathbf{1}_{\lbrace X>(a+b)/2\rbrace}.$$

In this example, the margin $\alpha$ is equal to $1$ and $\eta$ is $L$-Lipschitz. We then consider $k_n = \lfloor n^{2/5}\rfloor +1$
to assess the statistical performance of the Nearest Neighbor classifier.
Figure \ref{fig:rates} represents the excess risk obtained by the Nearest Neighbor classifier and the
successive degradation of the convergence rate when $g$ decreases to $0$ (on the left, the empirical performance of the Nearest Neighbor rule with the underlying distributions  and on the right for the  upper bound theoretically derived from Theorem \eqref{theo:Main2}). These numerical experiments are consistent with the theoretical result obtained in Theorem \ref{theo:Gene1} .

\begin{figure}
\begin{center}
\includegraphics[scale=0.35]{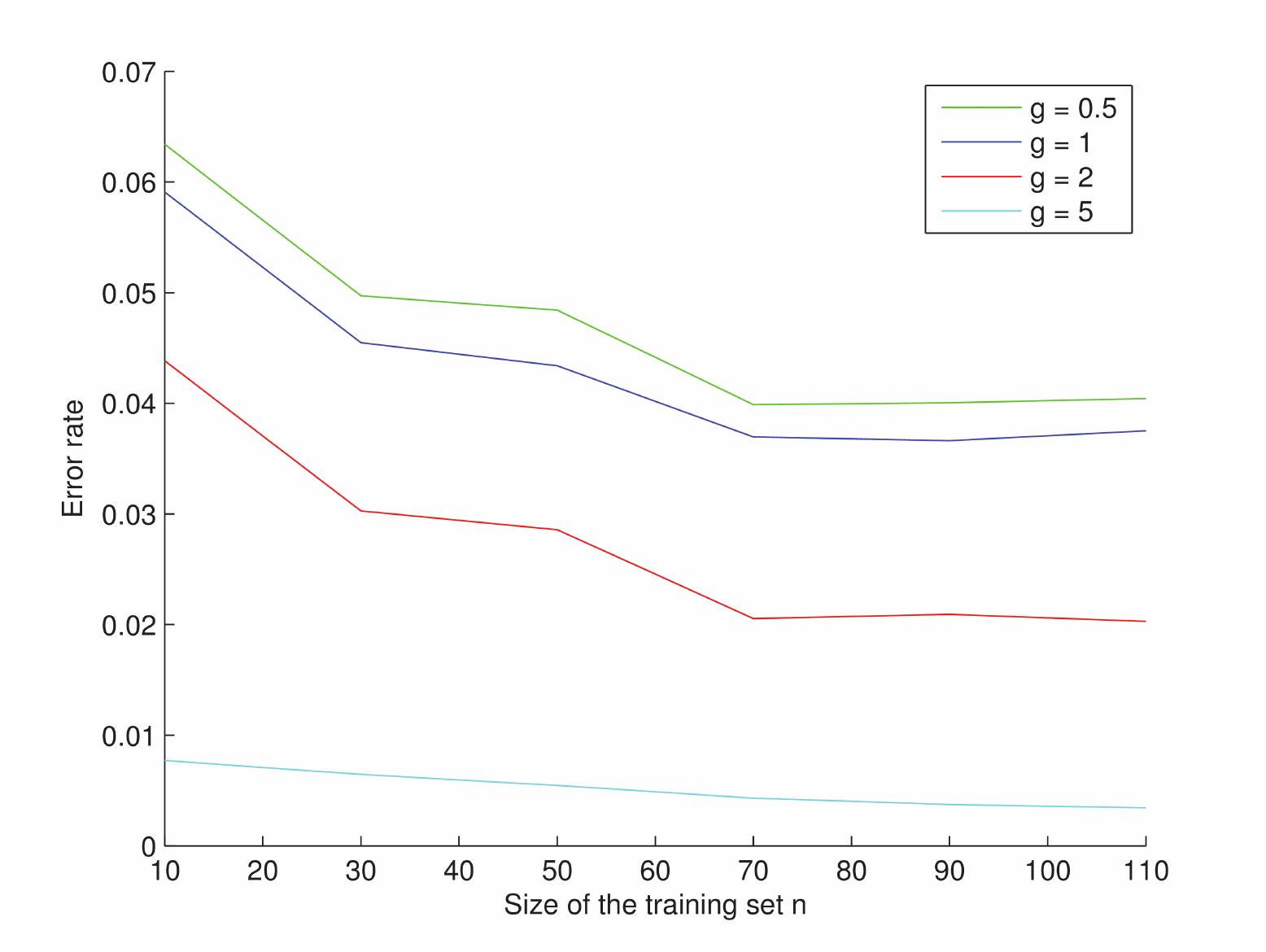}
\includegraphics[scale=0.35]{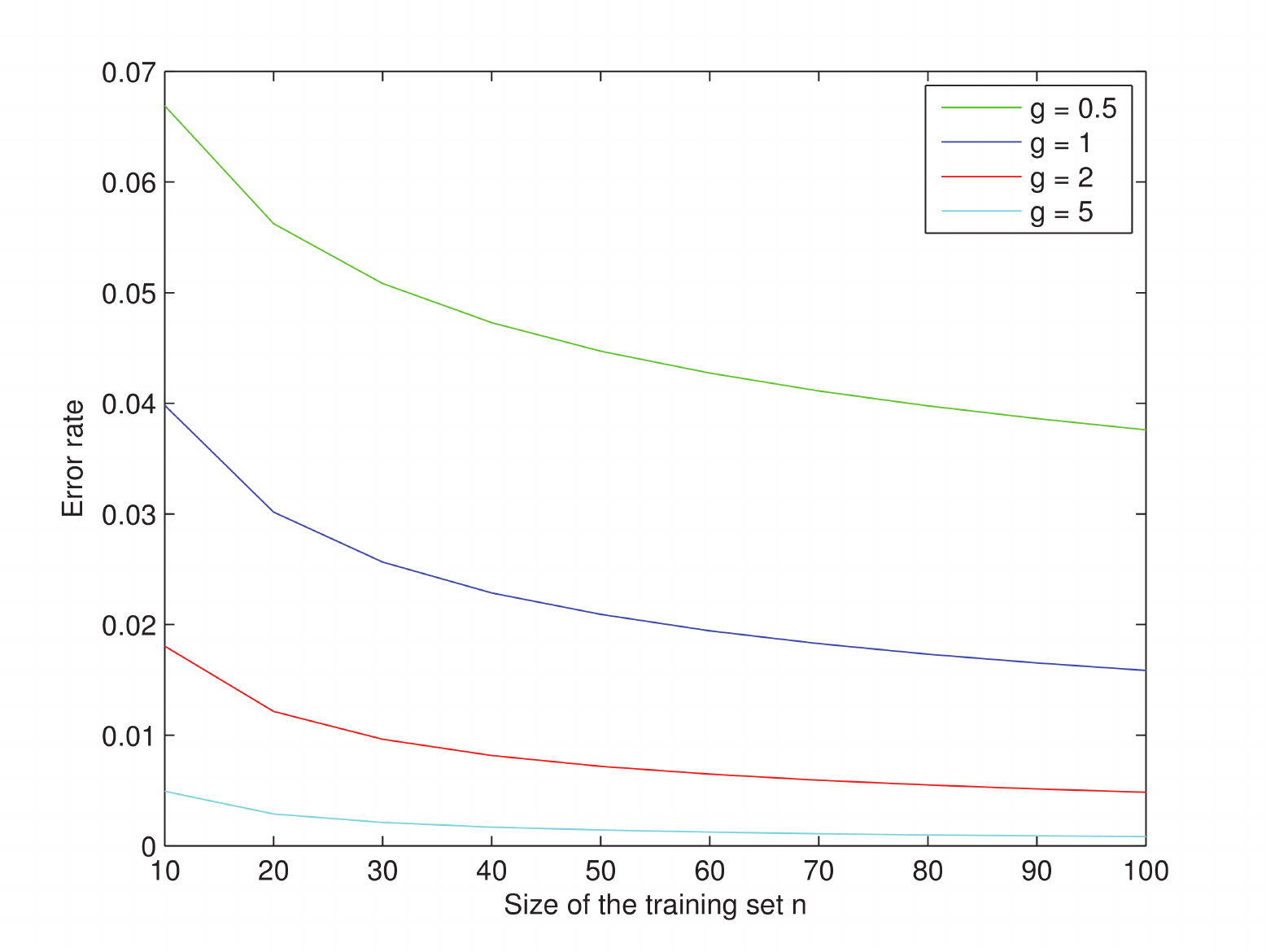}
\end{center}
\caption{Example of observed empirical rates  and upper bound theoretical rates given by \eqref{eq:rates} for several power law distributions of parameter $g$.\label{fig:rates}}
\end{figure}

\paragraph{Comparison between the standard nearest neighbor and its sliced counterpart}

We provide here a short numerical study that aims to compare the results reached by the standard nearest neighbor rule described in Theorem \ref{theo:Main2} and the ones obtained by its sliced counterpart described in Section \ref{sec:minimax113} and in Theorem \ref{theo:Main_opt}.
To measure such an improvement, we have chosen to once again use some non-compactly supported distributions and several different location models.

On the one hand, as pointed out in Theorem \ref{theo:Main2}, the standard nearest neighbor will be tuned with a number of neighbor $k_n :=\lfloor  n^{\frac{2}{3+\alpha +d}} \rfloor +1$.

On the other hand, the sliced nearest neighbor rule described in Theorem \ref{theo:Main_opt} requires a preliminary estimation of the law of observation $\mathbb{P}_X$. To do this, we used the recent kernel density estimation package\footnote{kde.m is available on the author's Website of \cite{kernel_diffusion}.} provided by \cite{kernel_diffusion}, which is an adaptive estimator based on linear diffusion processes. Given any training set $(X_i,Y_i)_{1 \leq i \leq n}$, we first built the preliminary estimator $\hat{\mu}_n$ of the unknown density $\mu$. This estimator is interesting because of its adaptive smoothing properties and because it includes a very fast automatic bandwidth selection algorithm.

The sliced nearest neighbor rule then uses a number of neighbors that depends on the design point $X$. If the density estimate is large enough, that is, if $\hat{\mu}_n(X) \geq n^{-\frac{\alpha}{2+\alpha+d}}$:
$$
k_n(X) :=\lfloor  n^{\frac{2}{2+\alpha +d}} \rfloor +1.
$$
Otherwise, when $ 2^{-(j+1)} \leq \hat{\mu}_n(X) n^{\frac{\alpha}{2+\alpha+d}} \leq 2^{-(j)}$, the number $k_n(X)$ is:
$$
k_n(X) := \lfloor  n^{\frac{2}{2+\alpha +d}} 2^{- \frac{2 j}{2+d}} \rfloor +1.
$$

To draw some reliable comparisons, we also used some various laws for the random variable $Z$ involved in the definition of the location model \eqref{eq:model_location} (Normal distributions, Cauchy distributions, and Power laws) whose parameters are described in Table \ref{table:gloss}. The two location parameters are still denoted $a$ and $b$ and fixed such that $a=-b$.

In each situation, we used a Monte-Carlo strategy with 1000 replications to compute the mean excess risk of each nearest neighbor rule. We used a training set of cardinal $n$, as well as a test set of size $200$. Results are given in Table \ref{table:slice_vs_std}.

\begin{table}[h!]
\begin{center}
\begin{tabular}{|l|c|c|c|c|c|c|c|c|c|}
\hline
Law of $Z$ & \multicolumn{3}{c|}{$n=100$} & \multicolumn{3}{c|}{$n=500$} & \multicolumn{3}{c|}{$n=1000$} \\
\hline
Gauss, $a=1,\sigma=2$ &$19.2_{.6} $ &$18.1_{.6}$&6$\%$ &$16.4_{.5}$&$13.9_{.5}$&15$\%$&$15.4_{.5}$&$12_{.5}$&22$\%$ \\\hline \vspace{0.2em}
Cauchy, $a=\frac{1}{2},\gamma=\frac{1}{2}$ & $2.6_{.2}$&$1.9_{.2}$&26$\%$&$1.4_{.1}$&$1.2_{.1}$&14$\%$&$0.9_{.05}$&$0.8_{.05}$&6$\%$ \\\hline \vspace{0.2em}
Cauchy, $a=\frac{1}{2},\gamma=1$ & $4.4_{.3}$&$3.6_{.2}$&18$\%$&$3.1_{.3}$&$2.2_{.2}$&28$\%$&$2.3_{.2}$&$1.4_{.2}$&37$\%$ \\\hline \vspace{0.2em}
Power, $a=\frac{1}{2},\gamma=1$ & $3.8_{.3}$&$3_{.3}$&20$\%$&$2.7_{.2}$&$2.1_{.2}$&22$\%$&$1.9_{.2}$&$1.5_{.1}$&19$\%$ \\\hline \vspace{0.2em}
Power, $a=\frac{1}{2},\gamma=2$ & $2_{.2}$&$1.7_{.2}$&13$\%$&$1.2_{.2}$&$1.0_{.1}$&15$\%$&$0.7_{.1}$&$0.6_{.1}$&14$\%$ \\\hline
\end{tabular}
\end{center}
\caption{ Mean excess risk multiplied by $100$ (left: standard nearest neighbor; middle: sliced nearest neighbor; right: percentage of improvement). Standard errors are given in small script. \label{table:slice_vs_std}}
\end{table}
We may observe in Table \ref{table:slice_vs_std} that the sliced version of the nearest neighbor always outperforms the standard one. Such a numerical result is consistent with the theoretical ones of Theorem \ref{theo:Main2} and  \ref{theo:Main_opt}.
Note also that the relative improvement of the sliced nearest neighbor rule seems to increase when the number of observations $n$ growth, meaning that each excess risk of the two procedures varies with a different power of $n$.

Finally, it should be mentioned that we have not tried to modify the dimension of the observations $X$. Indeed, the difference of the upper bounds given by Theorems \ref{theo:Main2} and \ref{theo:Main_opt} becomes more and more negligible when the dimension is increasing. This should also be  the case in the empirical study that will  be in the subject of a future work. Likewise, the statistical study of the empirical sliced nearest neighbor rule should also be addressed in a future study, since a balance between the estimation $\hat{\mu}_n$ of the density $\mu$ and the excess risk of classification with the sliced rule may exist. We have left this problem open for a future study.
%
%
%


\begin{supplement}[id=suppA]
  \sname{Supplement A}
  \stitle{Main proofs for this paper : Classification with the nearest neighbor rule in general finite dimensional spaces: necessary and sufficient conditions.}
  \slink[doi]{COMPLETED BY THE TYPESETTER}
  \sdatatype{.pdf}
  \sdescription{See in the temporary Appendix section after references.}
\end{supplement}

\bibliographystyle{alpha}
\bibliography{vraibib}

\begin{thebibliography}{RBRTM11}

\bibitem[AG97]{Amit}
Yali Amit and Donald Geman.
\newblock Shape quantization and recognition with randomized trees.
\newblock {\em Neural Computation}, 9:1545--1588, 1997.

\bibitem[Ass83]{Assouad1}
P~Assouad.
\newblock Deux remarques sur l'estimation.
\newblock {\em Comptes Rendus de l'Acad{\'e}mie des Sciences}, 296:1021--1024,
  1983.

\bibitem[AT07]{AudTsy}
Jean-Yves Audibert and Alexandre~B. Tsybakov.
\newblock Fast learning rates for plug-in classifiers.
\newblock {\em Ann. Statist.}, 35(2):608--633, 2007.

\bibitem[BBL05]{survey}
St{\'e}phane Boucheron, Olivier Bousquet, and G{\'a}bor Lugosi.
\newblock Theory of classification: a survey of some recent advances.
\newblock {\em ESAIM Probab. Stat.}, 9:323--375, 2005.

\bibitem[BBM08]{Massart}
Gilles Blanchard, Olivier Bousquet, and Pascal Massart.
\newblock Statistical performance of support vector machines.
\newblock {\em Ann. Statist.}, 36(2):489--531, 2008.

\bibitem[BDL08]{Biau}
G{\'e}rard Biau, Luc Devroye, and G{\'a}bor Lugosi.
\newblock Consistency of random forests and other averaging classifiers.
\newblock {\em J. Mach. Learn. Res.}, 9:2015--2033, 2008.

\bibitem[BFOS84]{Breiman}
Leo Breiman, Jerome~H. Friedman, Richard~A. Olshen, and Charles~J. Stone.
\newblock {\em Classification and regression trees}.
\newblock Wadsworth Statistics/Probability Series. Wadsworth Advanced Books and
  Software, Belmont, CA, 1984.

\bibitem[BGK10]{kernel_diffusion}
Z.I. Botev, J.F. Grotowski, and D.P. Kroese.
\newblock Kernel density estimation via diffusion.
\newblock {\em The Annals of Statistics}, 38:2916--2957, 2010.

\bibitem[BH79]{Assouad2}
J~Bretagnole and C.~Huber.
\newblock Estimation des densit\'es: risque minimax.
\newblock {\em Zeitschrift f\"ur Wahrscheinlichkeitstheorie und verwandte
  Gebiete.}, 47:119--137, 1979.

\bibitem[BM06]{empimini}
P.L. Bartlett and S.~Mendelson.
\newblock Empirical minimization.
\newblock {\em Probability Theory and Related Fields}, 135 (3):311--334, 2006.

\bibitem[BNC89]{Cox}
O.~E. Barndorff-Nielsen and D.~R. Cox.
\newblock {\em Asymptotic techniques for use in statistics}.
\newblock Monographs on Statistics and Applied Probability. Chapman \& Hall,
  London, 1989.

\bibitem[BR03]{Bickel_Ritov2003}
Peter~J. Bickel and Ya'acov Ritov.
\newblock Nonparametric estimators which can be ``plugged-in''.
\newblock {\em Ann. Statist.}, 31(4):1033--1053, 2003.

\bibitem[Bre01]{BreimanRF}
Leo Breiman.
\newblock Random forests.
\newblock {\em Machine Learning}, 45:5--32, 2001.

\bibitem[Can13]{Cannings}
Timothy Cannings.
\newblock Nearest neighbour classification in the tails of a distribution.
\newblock {\em Preprint}, 2013.

\bibitem[Cas07]{casal}
Alberto~Rodriguez Casal.
\newblock Set estimation under convexity type assumptions.
\newblock {\em Annales de l'Institut Henri Poincare (B) Probability and
  Statistics}, 43(6):763 -- 774, 2007.

\bibitem[CG06]{Cerou_Guyader}
Fr{\'e}d{\'e}ric C{\'e}rou and Arnaud Guyader.
\newblock Nearest neighbor classification in infinite dimension.
\newblock {\em ESAIM Probab. Stat.}, 10:340--355 (electronic), 2006.

\bibitem[CH67]{Covert}
T.~Covert and P.E. Hart.
\newblock Nearest neighbour pattern classification.
\newblock {\em IEEE, Transactions on Information Theory}, 13:21--27, 1967.

\bibitem[Dev81]{Dev81}
Luc Devroye.
\newblock On the almost everywhere convergence of nonparametric regression
  function estimates.
\newblock {\em Ann. Statist.}, 9(6):1310--1319, 1981.

\bibitem[DGKL94]{DGKL}
Luc Devroye, L{\'a}szl{\'o} Gy{\"o}rfi, Adam Krzy{\.z}ak, and G{\'a}bor Lugosi.
\newblock On the strong universal consistency of nearest neighbor regression
  function estimates.
\newblock {\em Ann. Statist.}, 22(3):1371--1385, 1994.

\bibitem[DGL96]{DGL}
Luc Devroye, L{\'a}szl{\'o} Gy{\"o}rfi, and G{\'a}bor Lugosi.
\newblock {\em A probabilistic theory of pattern recognition}, volume~31 of
  {\em Applications of Mathematics (New York)}.
\newblock Springer-Verlag, New York, 1996.

\bibitem[DW77]{Devroye}
Luc~P. Devroye and T.~J. Wagner.
\newblock The strong uniform consistency of nearest neighbor density estimates.
\newblock {\em Ann. Statist.}, 5(3):536--540, 1977.

\bibitem[FH51]{Fix}
E~Fix and J.L. Hodges.
\newblock Discriminatory analysis, nonparametric discrimination, consistency
  properties.
\newblock {\em Randolph Field, Texas, Project 21-49-004, Report 4}, 1951.

\bibitem[FS97]{Freund}
Yoav Freund and Robert~E. Schapire.
\newblock A decision-theoretic generalization of on-line learning and an
  application to boosting.
\newblock {\em J. Comput. System Sci.}, 55(1, part 2):119--139, 1997.
\newblock Second Annual European Conference on Computational Learning Theory
  (EuroCOLT '95) (Barcelona, 1995).

\bibitem[GKKW02]{GKKW2002}
L{\'a}szl{\'o} Gy{\"o}rfi, Michael Kohler, Adam Krzy{\.z}ak, and Harro Walk.
\newblock {\em A distribution-free theory of nonparametric regression}.
\newblock Springer Series in Statistics. Springer-Verlag, New York, 2002.

\bibitem[GL14]{Lepski}
A.~Goldenshluger and O.~Lepski.
\newblock On adaptive minimax density estimation on {$R^d$}.
\newblock {\em Probab. Theory Related Fields}, 159(3-4):479--543, 2014.

\bibitem[Gy{\H{o}}78]{Gyorfi78}
L{\'a}szl{\'o} Gy{\H{o}}rfi.
\newblock On the rate of convergence of nearest neighbor rules.
\newblock {\em IEEE Trans. Inform. Theory}, 24(4):509--512, 1978.

\bibitem[Gy{\"o}81]{Gyorfi81}
L{\'a}szl{\'o} Gy{\"o}rfi.
\newblock The rate of convergence of {$k_{n}$}-{NN} regression estimates and
  classification rules.
\newblock {\em IEEE Trans. Inform. Theory}, 27(3):362--364, 1981.

\bibitem[HLS02]{Husler}
J{\"u}rg H{\"u}sler, Regina~Y. Liu, and Kesar Singh.
\newblock A formula for the tail probability of a multivariate normal
  distribution and its applications.
\newblock {\em J. Multivariate Anal.}, 82(2):422--430, 2002.

\bibitem[HPS08]{Hall}
Peter Hall, Byeong~U. Park, and Richard~J. Samworth.
\newblock Choice of neighbor order in nearest-neighbor classification.
\newblock {\em Ann. Statist.}, 36(5):2135--2152, 2008.

\bibitem[Kac49]{Kac}
M.~Kac.
\newblock On deviations between theoretical and empirical distributions.
\newblock {\em Proc. Nat. Acad. Sci. U. S. A.}, 35:252--257, 1949.

\bibitem[Kle03]{Kleinphd}
Thierry Klein.
\newblock {\em In\'egalit\'es de concentration,martingales et arbres
  al\'eatoires}.
\newblock PhD thesis, Universit\'e de Versailles-Saint-Quentin (France), 2003.

\bibitem[Lec07]{Lecue}
Guillaume Lecu{\'e}.
\newblock Simultaneous adaptation to the margin and to complexity in
  classification.
\newblock {\em Ann. Statist.}, 35(4):1698--1721, 2007.

\bibitem[Lia11]{Lian}
Heng Lian.
\newblock Convergence of functional {$k$}-nearest neighbor regression estimate
  with functional responses.
\newblock {\em Electron. J. Stat.}, 5:31--40, 2011.

\bibitem[LM14]{LM_2014}
S.~Loustau and C.~Marteau.
\newblock Minimax fast rates for discriminant analysis with errors in
  variables.
\newblock 2014.
\newblock To appear in Bernoulli.

\bibitem[LS01]{LiShao}
W.~V. Li and Q.-M. Shao.
\newblock Gaussian processes: inequalities, small ball probabilities and
  applications.
\newblock In {\em Stochastic processes: theory and methods}, volume~19 of {\em
  Handbook of Statist.}, pages 533--597. North-Holland, Amsterdam, 2001.

\bibitem[MT99]{MamTsy}
Enno Mammen and Alexandre~B. Tsybakov.
\newblock Smooth discrimination analysis.
\newblock {\em Ann. Statist.}, 27(6):1808--1829, 1999.

\bibitem[RBRTM11]{curse}
Patricia Reynaud-Bouret, Vincent Rivoirard, and Christine Tuleau-Malot.
\newblock Adaptive density estimation: a curse of support?
\newblock {\em J. Statist. Plann. Inference}, 141(1):115--139, 2011.

\bibitem[Sam12]{Samworth}
R.~Samworth.
\newblock Optimal weighted nearest neighbour classifiers.
\newblock {\em Annals of Statistics}, 40:2733--2763, 2012.

\bibitem[Sha00]{MR1777538}
Qi-Man Shao.
\newblock A comparison theorem on moment inequalities between negatively
  associated and independent random variables.
\newblock {\em J. Theoret. Probab.}, 13(2):343--356, 2000.

\bibitem[Ste05]{Steinwart}
Ingo Steinwart.
\newblock Consistency of support vector machines and other regularized kernel
  classifiers.
\newblock {\em IEEE Trans. Inform. Theory}, 51(1):128--142, 2005.

\bibitem[Sto77]{Stone}
Charles~J. Stone.
\newblock Consistent nonparametric regression.
\newblock {\em Ann. Statist.}, 5(4):595--645, 1977.
\newblock With discussion and a reply by the author.

\bibitem[Tsy04]{Tsybakov_tout_seul}
Alexandre~B. Tsybakov.
\newblock Optimal aggregation of classifiers in statistical learning.
\newblock {\em Ann. Statist.}, 32(1):135--166, 2004.

\bibitem[Tsy09]{Tsybakov_book}
Alexandre~B. Tsybakov.
\newblock {\em Introduction to nonparametric estimation}.
\newblock Springer Series in Statistics. Springer, New York, 2009.
\newblock Revised and extended from the 2004 French original, Translated by
  Vladimir Zaiats.

\bibitem[Vap98]{Vapnik}
Vladimir~N. Vapnik.
\newblock {\em Statistical learning theory}.
\newblock Adaptive and Learning Systems for Signal Processing, Communications,
  and Control. John Wiley \& Sons, Inc., New York, 1998.
\newblock A Wiley-Interscience Publication.

\end{thebibliography}

\newpage 
\appendix
\section{Proofs}
\label{s:proofs}

Recall that $\EE$ (resp. $\EE_X$, $\EE_{\otimes^n}$) denote the expectation with respect to the measure $\PP$ (resp. $\PP_X$, $\PP_{\otimes^n}$).

\subsection{Proofs of the lower bounds} \label{sec:lowerbound}

The proofs of the lower bounds presented in both Theorem \ref{theo:NON1} and Theorem \ref{theo:NON2} are inspired from the construction proposed in  \cite{AudTsy}.
It is based on Assouad's cube method (see \cite{Assouad1}, and  \cite{Assouad2}). This approach reduces the problem of obtaining a lower bound on the minimax risk to the problem of testing several couples of hypotheses. We refer to \cite{Tsybakov_book} for a comprehensive introduction to this useful method for deriving lower bounds on minimax risk.

\subsubsection{Baseline structure of the network}\label{sec:baseline}
We present here the common structure of the network of laws on $(X,Y)$, that is, the definition of the underlying measure $\mathbb{P}_{(X,Y)}$ on $\RR^d \times \{0,1\}$ (through the density $\mu$ and the regression function $\eta$).

\paragraph{Definition of $\eta$}
Let $(q,m)\in(\NN^{*})^{2}$ and $\left(x_1,\ldots,x_{m}\right)\in\RR^{d}$. We denote by $B_{i}$ the Euclidean ball of center $x_{i}$ and of radius  $2/q$, such that for any $i$ and $j$ we have $B_i\bigcap B_{j}=\emptyset$ (we choose $|x_{i}-x_{j}|\geq 5/q$).  Now consider a $C^{\infty}$ function $\varphi$ such that $\|\varphi\|_{\infty} = 1$, $\varphi$ is compactly supported in $[0, 2]$ such that $\varphi(x)=1$ when $|x| \leq 1$, and $\varphi(x)= 0$ for any $x>3/2$. Now let $\Phi_{j}(x)=c_{\varphi}q^{-1}\varphi(q|x-x_{j}|)$ so that  $\Phi_{j}(x)$ is also $C^{\infty}$ and supported in $B_j :=B(x_j,2/q)$. 
Denote by $A_{0}=\bigcup_{j=1}^{m}B_{j}$ and let $A_1=[0,1]^d\bigcap A_0^c$ and $A=A_0\bigcup A_1$ be  the support of the density $\mu$.


\paragraph{Definition of the Assouad Hypercube of regression functions}
We define
$
\Sigma_{m}=\left\{-1,1\right\}^{m},
$
and for any $\sigma\in \Sigma_{m}$: 
$$\forall 1 \leq j \leq m,\ \forall x\in B_{j}:\ \eta_{\sigma}(x)=\frac{1+\sigma_{j}\Phi_{j}(x)}{2}, \quad \text{and} \quad \eta_{\sigma}(x)=\frac{1}{2} \, \text{if} \, x\in A_{1}.$$
Figure \ref{fig:eta_sigma} shows  the regression function $\eta_{\sigma}$ for two opposite values of $\sigma_j$ and for a particular ball $B_j$.
\begin{figure}
\begin{center}
\includegraphics[scale=0.25]{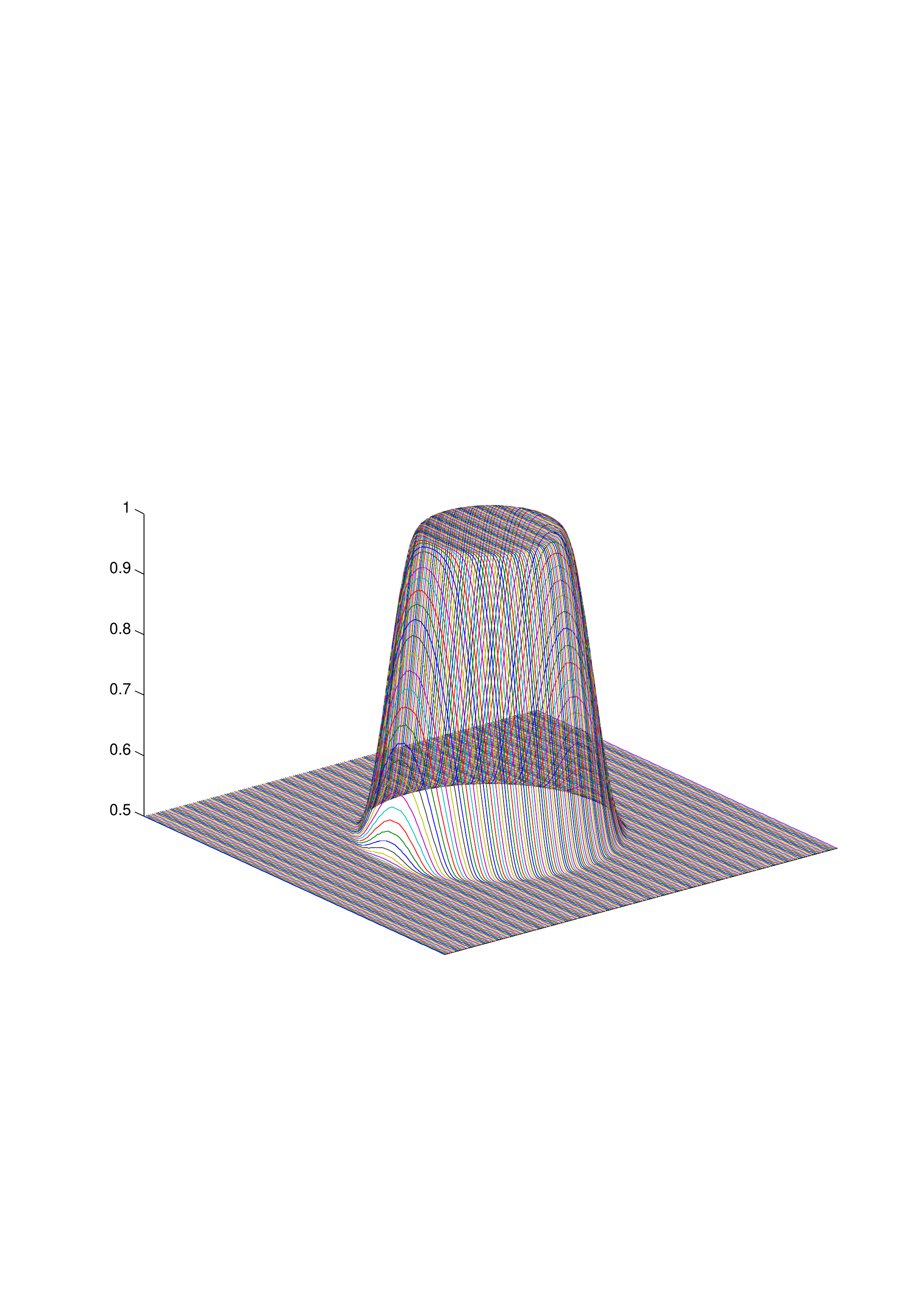}
\includegraphics[scale=0.25]{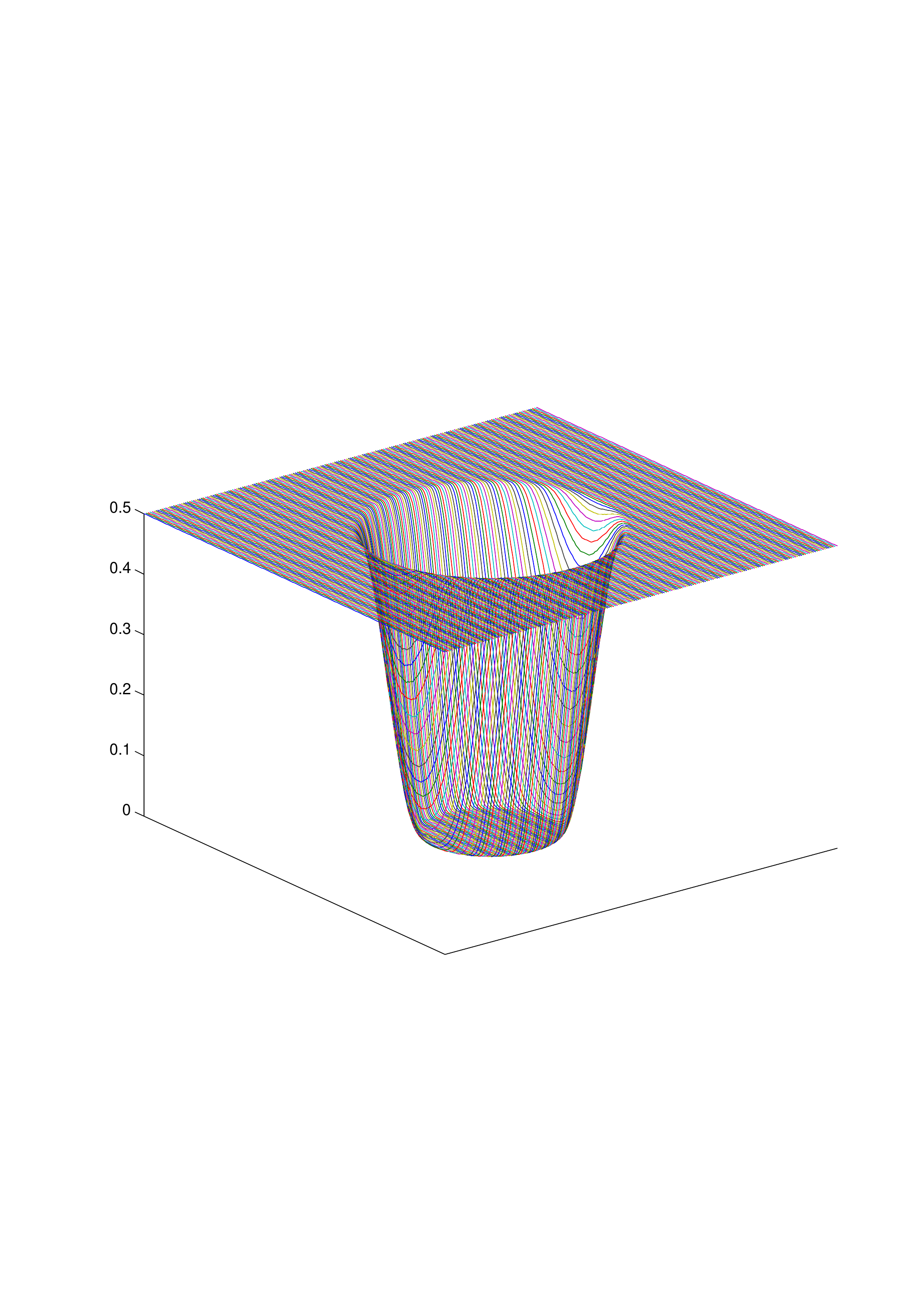}
\end{center}
\caption{Example of function $\eta_{\sigma}$ on a particular ball $B_j$ of size $1/q$. The value of $\eta_{\sigma}$ oscillates either  between $1/2$ and $1$ if $\sigma_j=1$ or $0$ and $1/2$ if $\sigma_j=-1$.\label{fig:eta_sigma}}
\end{figure}

\paragraph{The density $\mu$}
We use in the sequel a measure $\mu$ in the sequel that does not depend on $\sigma$.  Indeed, we even consider only some constant densities on each $B_{j}$. In particular, the measure $\mu$ of each ball $B_j$ is $\omega$ (that will be chosen later) and the density $\mu$ is then given by
$$\mu(x)=\frac{\omega}{\lambda(B_{j})} = \frac{ \omega q^d}{\gamma_d2^d},\ \forall x:in B_j$$
where $\gamma_d$ is the Lebesgue measure of the unit Euclidean ball of $\RR^d$. We now define $\mu$ on $A_1$ as
$$\mu(x)=\frac{1-m\omega}{\lambda\left(A_{1}\right)}.$$  A schematic representation of this measure
can be seen on the left of Figure \ref{fig:mesure}.

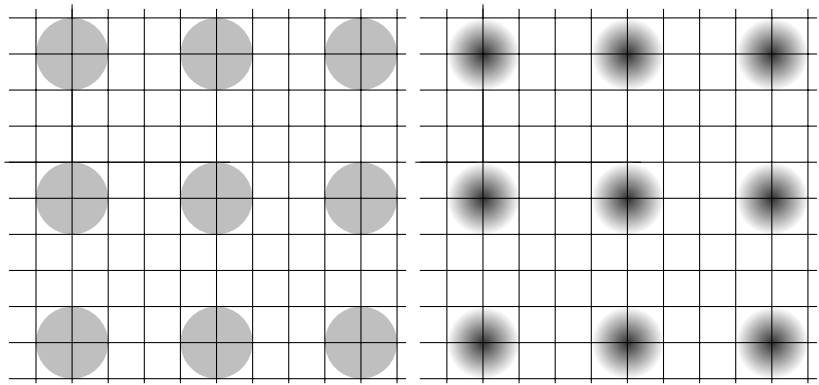
\begin{figure}[h!]
\begin{center}
\begin{tikzpicture}[scale=0.6]
\draw (-1.5,0) -- (3.5,0);
\draw (0,-1.5) -- (0,3.5);
\shade[inner color=gray!50, outer color=gray!50] (-0,2.4) circle (0.8cm);
\shade[inner color=gray!50, outer color=gray!50] (3.2,2.4) circle (0.8cm);
\shade[inner color=gray!50, outer color=gray!50] (6.4,2.4) circle (0.8cm);
\shade[inner color=gray!50, outer color=gray!50] (-0,-0.8) circle (0.8cm);
\shade[inner color=gray!50, outer color=gray!50] (3.2,-0.8) circle (0.8cm);
\shade[inner color=gray!50, outer color=gray!50] (6.4,-0.8) circle (0.8cm);
\shade[inner color=gray!50, outer color=gray!50] (-0,-4) circle (0.8cm);
\shade[inner color=gray!50, outer color=gray!50] (3.2,-4) circle (0.8cm);
\shade[inner color=gray!50, outer color=gray!50] (6.4,-4) circle (0.8cm);
\draw[step=.8cm] (-1.4,-4.9) grid (7.4,3.4);
\end{tikzpicture}
\begin{tikzpicture}[scale=0.6]
\draw (-1.5,0) -- (3.5,0);
\draw (0,-1.5) -- (0,3.5);
\shade[inner color=black!80, outer color=white] (-0,2.4) circle (0.8cm);
\shade[inner color=black!80, outer color=white] (3.2,2.4) circle (0.8cm);
\shade[inner color=black!80, outer color=white] (6.4,2.4) circle (0.8cm);
\shade[inner color=black!80, outer color=white] (-0,-0.8) circle (0.8cm);
\shade[inner color=black!80, outer color=white] (3.2,-0.8) circle (0.8cm);
\shade[inner color=black!80, outer color=white] (6.4,-0.8) circle (0.8cm);
\shade[inner color=black!80, outer color=white] (-0,-4) circle (0.8cm);
\shade[inner color=black!80, outer color=white] (3.2,-4) circle (0.8cm);
\shade[inner color=black!80, outer color=white] (6.4,-4) circle (0.8cm);
\draw[step=.8cm] (-1.4,-4.9) grid (7.4,3.4);
\end{tikzpicture}
\end{center}
\caption{Simplified representation of the measure $\PP_X$, the gray level is proportional to the value of the density $\mu$. Left: measure used in Section \ref{sec:baseline} or \ref{sec:proofNON2} (compactly supported measure or not) and in Section \ref{sec:mmasanstail} ($\blabla$ is fulfilled and not the Tail Assumption). Right: measure used in Section \ref{sec:sansmma} when the tail is fulfilled and not $\blabla$. \label{fig:mesure}}
\end{figure}

\paragraph{Margin condition}
For the sake of convenience, for any $\sigma \in \Sigma_m$, we denote $\mathbb{P}_{\sigma} := \mathbb{P}_{(X,Y),\sigma}$ the law of the couple $(X,Y)$.
Following the arguments of  \cite{AudTsy}, consider any $\sigma \in \Sigma_m$:
\begin{eqnarray*}
\mathbb{P}_{\sigma}\left(0 <|\eta_{\sigma}(X) - \frac{1}{2}| \leq t \right) &=&m  \mathbb{P}_{\sigma} \left( 0 <c_\varphi\varphi(q[|X-x_1|]) \leq 2 t q \right) \\
& = & m \int_{B(x_1,2/q)} \mathbf{1}_{\{0 \leq c_\varphi\varphi(q[|x-x_1|]) \leq 2 t q\}} \mu(x) dx
\end{eqnarray*}
Since $\varphi$ is equal to $1$ on $[0,1]$, we then obtain that:
\begin{eqnarray*}
\mathbb{P}_{\sigma}\left(0 < |\eta_{\sigma}(X) - \frac{1}{2}| \leq t \right) & \leq& m \int_{B(x_1,2/q)}  \mathbf{1}_{\{c_\varphi \leq 2 t q\}} \mu(x) dx \\
& = & \mathbf{1}_{\{c_\varphi \leq 2 t q\}} m \omega \lesssim  t^{\alpha}.
\end{eqnarray*}
as soon as:
$$
m\omega=O(q^{-\alpha}).
$$

\paragraph{Smoothness of $\eta_{\sigma}$}
We briefly check that the regression functions are Lipschitz, uniformly with respect to any choice of $q$.
First, it should be observed that:
$$
\forall (x,\tilde{x}) \in B_j \qquad |\eta_{\sigma}(x) - \eta_{\sigma}(\tilde{x})| = \frac{|\Phi_j(x)-\Phi_j(\tilde{x})|}{2} \leq \frac{ c_{\varphi}  \|\varphi'\|_{\infty}}{2}\|x-\tilde{x}\|
$$
On the contrary, when $(x,\tilde{x}) \in A_1$, $\eta_{\sigma}(x)=\eta_{\sigma}(\tilde{x})=1/2$. It now remains to study the situation when $x\in A_1$ and $\tilde{x} \in B_j$ for one $j$. When $\tilde{x}$ is in the exterior ring of size $3/(2q)$ (the set $B_j \cap B(x_j,3/(2q))^c$), we have:
$$\eta_{\sigma}(x)=\eta_{\sigma}(\tilde{x})=1/2.
$$
Now, if $\tilde{x}$ belongs to $B(x_j,3/(2q))$:
$$|\eta_{\sigma}(x)-\eta_{\sigma}(\tilde{x})| = \left|\frac{\Phi_j(\tilde{x})}{2}\right| \leq \frac{c_{\varphi} \|\varphi\|_{\infty}}{2q} \leq \frac{c_{\varphi} \|\varphi\|_{\infty}}{} \|x-\tilde{x}\|
$$
Hence, we can deduce the uniform Lipschitz bound (note that the case $x\in B_j$ and $\tilde{x}\in B_k $ can be treated in the same way):
$$\forall (x,\tilde{x})\in (\RR^{d})^{2}, \forall \sigma\in\Sigma_{n} \qquad 
|\eta_{\sigma}(x)-\eta_{\sigma}(\tilde{x})|\leq c_{\varphi} \frac{\|\varphi'\|_{\infty}+\|\varphi\|_{\infty}}{} \|x-\tilde{x}\|.
$$

\paragraph{Minoration of the risk}
Following the arguments of Theorem 3.5 in \cite{AudTsy}, we have that:
$$
\mathcal{R}_{n}\geq m\frac{\omega}{q}\left(1-q^{-1}\sqrt{n\omega}\right).
$$

\subsubsection{Proof of Theorem \ref{theo:NON2} and of Theorem \ref{theo:Gene1}, i)\label{sec:proofNON2}}
We first study the situation of rates when $\blabla$ and $\PTf$ are in force and use a measure similar to the one represented on the left of Figure \ref{fig:mesure}.
\paragraph{Besicovitch-like condition $\blabla$:}
We aim to show that our network satisfies the lower bound involved in $\blabla$. Consider $\delta \rightarrow 0^+$ and $q\rightarrow + \infty$. 
If $x\in A_{0}$ and $\delta = o(1/q)$ then one ball $B_j$  intersects at the least half of $B(x,\delta)$ and since $\mu$ is stepwise constant:
$$
\mathbb{P}_X\left(B(x,\delta)\right)\geq\frac{\mu(x)\lambda\left(B(x,\delta)\right)}{2}\geq \frac{\gamma_d}{2} \mu(x)\delta^{d}
$$
If $\delta$ is now proportional to $1/q$,  the last inequality is still true up to a constant (which is not illustrated here for the sake of simplicity).
Now if $q^{-1}=o(\delta)$, $B(x,\delta)$ contains a number $N_{\delta,q}$ of balls $(B_j)_{1 \leq j \leq m}$ such that
$
N_{\delta,q} \geq C_d \frac{\delta^d}{q^{-d}}.
$
In this case, we still have
$$
\mathbb{P}_X\left(B(x,\delta)\right)\geq\mathbb{P}_X\left(B(x,\delta)\cap \cup_{j=1}^m B_j \right) \geq N_{\delta,q} \times  \omega \geq C_d \delta^d q^d \omega = \frac{C_d}{\gamma_d} \mu(x) \delta^d.
$$ 
Hence, the measure $\mathbb{P}_X$  belongs to $\blabla$ with a constant $\kappa$ independent of $q$.

\paragraph{Tail Assumption $\PT$ or $\PTf$}
First, note that $\mathbb{P}_X$ is built such that if $x\in A_0$
$$
\mathbb{P}_X(\mu<\epsilon)=0\ \mathrm{if}\ \epsilon<\omega q^{d}/(\gamma_{d}2^d)\ \mathrm{and}\ \mathbb{P}_X(\mu<\epsilon) = m \omega\ \mathrm{if}\ \epsilon>\omega q^{d}/(\gamma_{d}2^d).$$
 Note that the density on $A_1$ is bounded from below and, as a result, we will not take the tail property on this set into account.

Since $\psi$ is increasing in a neighborhood of $0$, the tail property $\mathbb{P}_X(\mu<\epsilon) \lesssim \psi(\epsilon)$ is fulfilled as soon as:
$$
m \omega \lesssim \psi\left(\frac{\omega q^{d}}{\gamma_d2^d}\right).
$$

\paragraph{Calibration for the minoration}
Recall that 
$
\mathcal{R}_{n}\geq m\frac{\omega}{q}\left(1-q^{-1}\sqrt{n\omega}\right)
$
and that we must satisfy the following constraints 
$$
m\omega=O(q^{-\alpha})\ \mathrm{and}\,
m \omega \lesssim \psi\left(\frac{\omega q^{d}}{\gamma_d2^d}\right).
$$
The lower bound above is meaningful as soon as we choose $\omega \leq \frac{q^{2}}n$.
If we denote $\epsilon_{n,\alpha,d}=q^{-1}$, the values of $m,q,\omega$ that provide a tradeoff between all these constraints are obtained with

$$
m \omega=q^{-\alpha}, \, \frac{\omega q^d}{\gamma_d2^d} = \psi^{-1}(m\omega),\, \omega=\frac{q^2}{2n}.
$$
In particular, the constraints are optimized when $\epsilon_{n,\alpha,d}$ solves 
$2^{-d}\gamma_d^{-1} \frac{\epsilon_{n,\alpha,d}^{-2}}{2n} \epsilon_{n,\alpha,d}^{-d} = \psi^{-1}(\epsilon_{n,\alpha,d}^{\alpha}),$
 which leads to the lower bound
$$ \R_n \gtrsim  \epsilon_{n,\alpha,d}^{1+\alpha}\quad  \text{with} \quad  n^{-1}= \epsilon_{n,\alpha,d}^{d+2} \psi^{-1}(\epsilon_{n,\alpha,d}^{\alpha}).$$

In the above calibration, we obtain that:
$$
\omega=q^{-d} \psi^{-1}(q^{-\alpha}) \quad \text{and} \quad m=q^{d} \frac{q^{-\alpha}}{\psi^{-1}(q^{-\alpha})}.
$$
This ends the proof of Theorem \ref{theo:NON2} and Theorem \ref{theo:Gene1}, i).
\hfill $\square$

Looking carefully at the proof of the theorem above, we can see that the influence of $\psi$ is as follows:
\begin{itemize}
\item If $\epsilon = o(\psi(\epsilon))$, then the construction of the network yields a non compactly supported distribution since: $$\lambda(Supp(\mu)) \gtrsim  m q^{-d} = \frac{q^{-\alpha}}{\psi^{-1}(q^{-\alpha})} \longrightarrow + \infty \qquad \text{as} \qquad 
 q \longrightarrow + \infty.
 $$
 As pointed out in paragraph \ref{sec:examples}, a polynomial decay of the density when $x$ grows to $\infty$ yields such a tail size.
\item In the opposite situation, when $\psi(\epsilon)= O(\epsilon)$, the corresponding density has a compact support. In particular, when $\psi(\epsilon) \sim \epsilon$, our network is exactly the same as the one used in  \cite{AudTsy} and we naturally recover the lower bound $n^{-(1+\alpha)/(2+\alpha+d)}$.
\end{itemize}


\subsubsection{Proof of Theorem \ref{theo:NON1}, item $i)$}\label{sec:mmasanstail}

We study the specific case where the Besicovitch has to be fulfilled although the Tail Assumption is no longer necessary. In such a case, we still use the construction shown on the left of Figure \ref{fig:mesure} and provided in Section \ref{sec:proofNON2} but \textit{$m$ can be chosen much greater than $q^d$}. For example, for a parameter $\tau>0$ chosen in the sequel, we assume that $m=q^{d+\tau} >> q^{d}$ as $q \longrightarrow + \infty$. In such a case, the underlying measure $\mathbb{P}_X$ is no longer compactly supported. 

Using the same argument as above, Assumption \textbf{A3} is still satisfied since the number $m$ of balls $B_i$ does not influence the minoration of $\mathbb{P}_X(B(x,\delta))$.

We have to satisfy the following constraints:
$$
m \omega = O(q^{-\alpha}), \omega \leq \frac{q^2}{n}.
$$
We keep the value of $\omega$ as:
$$
\omega = q^2/(2n),
$$
and the calibration of $q$ with respect to $m$ yields
$$
q=n^{\frac{1}{2+d+\alpha+\tau}}.
$$
We then obtain the lower bound
$$ \R_n \geq c_{\phi} n^{-\frac{1+\alpha}{2+\alpha+d+\tau}}.$$
By increasing the size of $\tau$ ($\tau_n = n$ for example), it can then be observed that it is possible to obtain any arbitrary value between $0$ and $c_\phi$. Hence, for any classifier $\Phi_n$, a distribution on $(X,Y)$ exists
such that Assumptions \textbf{A1-A3} hold
 and that the classifier $\Phi_n$ cannot be consistent.
\subsubsection{Proof of Theorem \ref{theo:NON1}, item $ii)$}\label{sec:sansmma}
We then study what could happen when the Tail Assumption is satisfied but Assumption \textbf{A3} can be violated. 
The idea is to pick the density of observations to ensure the validity of the Tail Assumption. To do this, we consider the new marginal on $X$ whose $\mu$ defined as:
$$
\forall x \in B_j \qquad \mu(x) =  \omega \frac{q^{\gamma d} \left( 1-|x-x_j| q^{\gamma}\right)_{+}}{\int_{B(0,1)} \left( 1-|x| \right)_{+} dx}
$$
so that:
$$
\int_{B_j} \mu(x) dx = \omega.
$$
The obtained measure is represented on the right of Figure \ref{fig:mesure}.
We proceed in the same way as in paragraph \ref{sec:baseline}:  $\varphi$ is still lower bounded by a strictly positive constant (as soon as $\gamma \geq 1$) and the Margin Assumption is satisfied as soon as $m \omega = O(q^{-\alpha})$.

It should also be observed that Assumption \textbf{A3} is not satisfied here. In fact, when we choose $\gamma >1$ and the reference radius $\delta$  as $\delta = q^{-a}$ for $a \in [1,\gamma[$:

$$
\mathbb{P}_X(B(x_j,q^{-a})) = \omega  \quad \text{and} \quad \delta^d \mu(x_j) =c q^{- a d} \omega q^{\gamma d} = c \omega q^{d(\gamma -a)},
$$
where 
$$
c=\frac{1}{\int_{B(0,1)} \left( 1-|x| \right)_{+} dx}
$$
The left hand side becomes negligible with respect to the right hand side as soon as $q \longrightarrow + \infty$.

We now check that such a definition of density $\mu$ satisfies the Tail Assumption. Consider any $\epsilon>0$. We then have:
\begin{eqnarray*}
\lefteqn{\mathbb{P}_X \left( \{\mu < \epsilon\} \right)}\\
 &=& m \omega \int_{B(x_1,1/q)} c  q^{\gamma d}   \left( 1-|x-x_1| q^{\gamma}\right)_{+} \mathbf{1}_{\{c \omega q^{\gamma d} \left( 1-|x-x_j| q^{\gamma}\right)_{+} \leq \epsilon\}} dx \\
  &=&   m \omega \int_{B(x_1,1/q)} c  q^{\gamma d}   \left( 1-|x-x_1| q^{\gamma}\right)_{+}
  \mathbf{1}_{\{  \left( 1-|x-x_j| q^{\gamma}\right)_{+} \leq c^{-1} \omega^{-1} q^{- \gamma d} \epsilon\}} dx. 
\end{eqnarray*}
Consider the variable $y=q^{\gamma} ( x - x_1)$. We then obtain
$$
\mathbb{P}_X\left( \{\mu < \epsilon\} \right)  = m \omega \int_{B(0,1)} c   \left( 1- |y| \right)_{+}  \mathbf{1}_{\{  \left( 1-|y|\right)_{+} \leq c^{-1} \omega^{-1} q^{- \gamma d} \epsilon\}} dy \leq  \gamma_d  m q^{-\gamma d} \epsilon.
$$
As a consequence, the Tail Assumption is true as soon as $m = O(q^{\gamma d})$. We point out that since we chose $\gamma>1$  in the sequel, $m$ is then greater than $q^{d}$ and the support of $\mu$ is no longer compact since $q \longrightarrow + \infty$.

Following the roadmap of paragraph \ref{sec:proofNON2}, we then obtain the lower bound calibrations of $q$ and $\omega$ such that:
$$
\R_n \geq n^{-\frac{1+\alpha}{2+\alpha+\gamma d}}.
$$
Again, a sufficiently large value of $\gamma $ makes it possible to obtain arbitrarily slow rates (and even non-consistent classifiers).

\subsection{Proof of Theorem \ref{theo:Main1}}
\label{preuve_dim_finie}

Let $\epsilon>0$ be a given real number (whose value will be specified later), and define:
$$
\Be := \left\{x \in \mathbb{R}^d \quad \vert \quad |\eta(x)-1/2| \leq \epsilon \right\}.
$$

Applying Proposition \ref{prop:gyorfi} in Section \ref{s:technical}, the excess risk can be decomposed as follows:

\begin{eqnarray*}
\mathcal{R}(\Phi_n) - \mathcal{R}(\Phi^*) &=& \EE \left[ |2 \eta(X)-1| \mathbf{1}_{\{\Phi_n(X) \neq \Phi^*(X)\}}\right], \\
& = &  \underbrace{\EE \left[ |2 \eta(X)-1| \mathbf{1}_{\{\Phi_n(X) \neq \Phi^*(X)\}}  \mathbf{1}_{X \in \Be} \right]}_{:=T_{1,\epsilon}} \\
& & +
\underbrace{ \EE \left[ |2 \eta(X)-1| \mathbf{1}_{\{\Phi_n(X) \neq \Phi^*(X)\}}\mathbf{1}_{X \in \Be^c} \right] }_{:=T_{2,\epsilon}}.
\end{eqnarray*}
Now, the Margin Assumption \textbf{A2} yields:
\begin{equation}\label{eq:control1}
T_{1,\epsilon} \leq 2 \EE \left[ |\eta(X)-1/2|  \mathbf{1}_{X \in \Be} \right] \leq 2 \epsilon \PP_X(X \in\Be) \leq 2 C \epsilon^{1+\alpha}.
\end{equation}
In order to control $T_{2,\epsilon}$, define:
$$
\forall j \geq 1 \qquad 
\Bej := \left\{ x \in \mathbb{R}^d \quad \vert \quad 2^{j-1} \epsilon \leq  |\eta(x)-1/2| \leq 2^j \epsilon \right\}.
$$
Now, 
\begin{eqnarray*}
T_{2,\epsilon} & =& 2 \sum_{j \geq 1} \EE \left[  |\eta(X)-1/2|  \mathbf{1}_{\{\Phi_n(X) \neq \Phi^*(X)\}}  \mathbf{1}_{\{X \in \Bej \}} \right]\\
& \leq & 2 \epsilon  \sum_{j \geq 1}2^{j} \EE_X \left[  \mathbf{1}_{\{X \in \Bej \}} \EE_{\otimes^n} \left( \mathbf{1}_{\{\Phi_n(X) \neq \Phi^*(X)\}} \right) \right].
\end{eqnarray*}
We can apply Proposition \ref{prop:use} (see Section \ref{s:technical} below)  to obtain:
\begin{equation}\label{eq:serie}
T_{2,\epsilon} \leq 4 \epsilon \sum_{j \geq 1}2 ^{j}
\EE_X \left[  \mathbf{1}_{\{X \in \Bej \}}
 \exp \left(- 2 k_n \lfloor 2^{j-1} \epsilon - \Delta_n(X)  \rfloor_{+}^2 \right) \right].
\end{equation}
Since $\mu$ is lower bounded by $a>0$ on $\Omega$, we can apply Proposition \ref{prop:Delta2} with $a=\mu_-$ to obtain:
$$
\Delta_n(X)  \leq C \left( \left(\frac{k_n}{n} \mu_-^{-1}\right)^{1/d} + \exp\left(-3k_{n}/14\right)\right).
$$
 Now, we consider  $\epsilon=\epsilon_n \geq 2 \Delta_n(X)$ , for example by choosing:
\begin{equation}\label{eq:choose_eps}
\epsilon_n := 2C \left(  \left(\frac{k_n}{n} a^{-1} \right)^{1/d} + \exp\left(-3k_{n}/14\right)\right) .
\end{equation}
With $\epsilon_n$ defined as in (\ref{eq:choose_eps}), we deduce that 
$ 2^{j-1} \epsilon_n - \Delta_n(X) \geq 2^{j-1} \epsilon_n - \frac{\epsilon_n}{2} \geq \epsilon_n \left( 2^{j-1} - \frac{1}{2}\right) >0$. Thus, \eqref{eq:serie} becomes:
 \begin{eqnarray*}
T_{2,\epsilon_n}& \leq & 4 \epsilon_n \sum_{j \geq 1}2^{j} \EE_X \left[\mathbf{1}_{\{ 0 < |\eta(X)-1/2| < 2^{j} \epsilon_n\}} \exp\left(-2 k_{n}\epsilon_n^{2} \left(2^{j-1}-1/2\right)^{2}\right)\right].  
 \end{eqnarray*}
 Now, in order to control the previous bound, we choose $k_n$ such that:
\begin{equation}\label{eq:choose_k} 
 k_n = \epsilon_n^{-2}.
 \end{equation}
 Thanks to \eqref{eq:choose_eps}, the constraint \eqref{eq:choose_k} then yields:
 \begin{equation}\label{eq:taille_eps}
 \epsilon_n \sim n^{\frac{-1}{2+d}} \quad \text{and} \quad k_n  \sim n^{\frac{2}{2+d}}.
 \end{equation}
 We then obtain  that:
 \begin{eqnarray*}
T_{2,\epsilon_n} & \leq & 4 \epsilon_n \sum_{j \geq 1}2^{j} \EE_X \left[\mathbf{1}_{\{ 0 < |\eta(X)-1/2| < 2^{j} \epsilon_n \}} \exp\left(-\frac{2^{2j}}{8} \right)\right],\\
 & \leq &\epsilon_n  \sum_{j \geq 1} 2^{j+2}  \exp\left(-\frac{2^{2j}}{8}\right) \mathbb{P}_X \left(  |\eta(X)-1/2| < 2^{j} \epsilon_n\right) .  
 \end{eqnarray*}
The Margin Assumption applied to $\mathbb{P}_X \left(  |\eta(X)-1/2| < 2^{j} \epsilon_n\right)$ leads to:
 \begin{eqnarray*}
T_{2,\epsilon_n} & \leq & \epsilon_n^{1+\alpha} \sum_{j \geq  1} 2^{j(1+\alpha)+2}  \exp\left(-\frac{2^{2j}}{8}\right).
 \end{eqnarray*}
The series on the right hand side converges. This last bound associated with \eqref{eq:control1} leads to:
 $$
 \sup_{F\in\mathcal{F}} \left[ \mathcal{R}(\Phi_n)- \mathcal{R}(\Phi^{*}) \right] \leq C n^{-\frac{1+\alpha}{2+d}} .$$

\subsection{Proof of the upper bounds: Theorem \ref{theo:Main2} and Theorem \ref{theo:Gene1} ii)}
\label{s:preuve_noncompact}

\begin{proof}[Proof of Theorem \ref{theo:Main2}]
We consider a constant $\gamma$ and  use the following decomposition of $\RR^{d}$ for a suitable $\gamma>0$ (that will be chosen later on):
$$
\RR^{d}= \underbrace{\{x : 0\leq \mu(x) \leq n^{-\gamma} \}}_{R_{n}}
\cup \underbrace{\{x : \mu>n^{-\gamma}\}}_{Q_{n}}.$$

We follow the roadmap of the proof of Theorem \ref{theo:Main1} and keep the notation $\Be$, which refers to $\Be := \left\{x \in \mathbb{R}^d \,  : \, |\eta(x)-1/2| \leq \epsilon \right\}$. Thanks to Proposition \ref{prop:gyorfi}, we obtain:
\begin{eqnarray*}
\mathcal{R}(\Phi_n) - \mathcal{R}(\Phi^*) &=& \EE \left[ |2 \eta(X)-1| \mathbf{1}_{\{\Phi_n(X) \neq \Phi^*(X)\}}\right] \\
& = &   \underbrace{ \EE \left[ |2 \eta(X)-1| \mathbf{1}_{\{\Phi_n(X) \neq \Phi^*(X)\}}\mathbf{1}_{X \in R_n} \right] }_{:=T_{R_n}}  \\ 
& &+ \underbrace{ \EE \left[ |2 \eta(X)-1| \mathbf{1}_{\{\Phi_n(X) \neq \Phi^*(X)\}}\mathbf{1}_{X \in Q_{n}} \right] }_{:=T_{Q_{n}}}. \\ 
\end{eqnarray*}

\paragraph{Study of $R_n$}
The Tail Assumption \textbf{A4} in the particular case where $\psi = Id$ leads to:
\begin{align*}
T_{R_n}&\leq\PP_X\left(X \in R_{n}\right)= \PP_X (\mu(X) \leq n^{-\gamma}) \lesssim n^{-\gamma}.
\end{align*}

\vspace{0.5cm}

\paragraph{Study of $Q_n$} Following the proof of Theorem \ref{theo:Main1} with $a=n^{-\gamma}$, Equations \eqref{eq:serie}-\eqref{eq:choose_k} yield:
\begin{equation}
T_{Q_n} \leq C \epsilon_n^{1+\alpha},
\label{eq:TQn}
\end{equation}
where $\epsilon_n$ and $k_n$ satisfy the balance equations
$$ \epsilon_n \sim  2C  \left(\frac{k_n}{n} a^{-1} \right)^{1/d}  = 2C  \left(\frac{k_n}{n^{1-\gamma}} \right)^{1/d} \qquad \text{and} \qquad  k_n = \epsilon_n^{-2}.$$
The equilibria are met in the two terms above with
\begin{equation}
k_{n} \sim C n^{\frac{2(1-\gamma)}{2+d}},\qquad \text{and} \qquad 
\epsilon_{n}\lesssim n^{-\frac{(1-\gamma)}{2+d}}.
\label{eq:TQn2}
\end{equation}

\paragraph{Final control of the risk.} From the previous bounds, we obtain that:
\begin{equation}\label{eq:finalcontrol} \mathcal{R}(\Phi_n) - \mathcal{R}(\Phi^*) \lesssim n^{-\frac{(1-\gamma)(1+\alpha)}{2+d}}  +  n^{-\gamma}.\end{equation}

We optimize the last expression with respect to $\gamma$ by setting
$$(1-\gamma) (1+\alpha) = \gamma(2+d) \Leftrightarrow \gamma = \frac{1+\alpha}{3+\alpha+d}.$$
The above choices allow us  to conclude that:
 $$
 \sup_{F\in\mathcal{F}} \left[ \mathcal{R}(\Phi_n)- \mathcal{R}(\Phi^{*}) \right] \leq C  n^{-\frac{1+\alpha}{3+\alpha+d}}.$$
 \end{proof}

\begin{proof}[Proof of Theorem \ref{theo:Gene1}] ii)
 We follow the roadmap of the previous proof and replace the threshold $n^{-\gamma}$ with $a_n$, which should be carefully chosen.
  The key balance is still 
$k_n = \nu_n^{-2}$ on the set $\left\{ \mu \geq a_n \right\}$ with the optimal setting:
$$
 \frac{k_n}{n a_n} \lesssim \nu_n^{d} 
$$
Since we want to obtain a minimal value for $\nu_n$, this last equation leads to the choice:
\begin{equation}\label{eq:choose_thresh}
a_n = \frac{1}{n \nu_n^{2+d}},
\end{equation}
and the upper bound of the excess risk we obtained is then
 $$
 \sup_{F\in\mathcal{F}} \left[ \mathcal{R}(\Phi_n)- \mathcal{R}(\Phi^{*}) \right] \lesssim \nu_n^{1+\alpha} + \psi(a_n).
 $$
 The natural equilibrium is found when plug-in \eqref{eq:choose_thresh} in this last upper bound and $\nu_n$ are be fixed so that  $ \psi^{-1} \left( \nu_n^{1+\alpha} \right) = n^{-1} \nu_n^{-(2+d)}$. 
We then obtain the rate $\nu_n^{1+\alpha}$ with the balance equation:
$$
\nu_n^{2+d}  \psi^{-1}(\nu_n^{1+\alpha}) \sim n^{-1}.
$$
\end{proof}

\subsection{Proof of Theorem \ref{theo:Main_opt} (sliced nearest neighbor)}
\begin{proof}
We use the partition of $\Omega$ naturally derived from  the slices $\Omega_{n,0}$ and $(\Omega_{n,j})_{j \geq 1}$:
$$
\Omega_{n,0} := \left\{ x \vert \mu(x) \geq n^{- \gamma} \right\} \quad \text{and} \quad 
\Omega_{n,j} := \left\{ x \vert n^{- \gamma} 2^{-(j+1)} \leq  \mu(x) \leq n^{- \gamma} 2^{-j} \right\} .
$$
For this purpose, let  $\gamma \in (0,1)$ (that will be specified later) and:
$$k_{n,j}=k_{n,0} 2^{-2j/(2+d)} \quad \text{with} \quad k_{n,0} = n^{\frac{2(1-\gamma)}{2+d}} \log(n).$$
We then use the following decomposition of the excess risk:
\begin{eqnarray*}
\lefteqn{\mathcal{R}(\Phi_n) - \mathcal{R}(\Phi^*)}\\
 &=& \EE \left[ |2 \eta(X)-1| \mathbf{1}_{\{\Phi_n(X) \neq \Phi^*(X)\}}
\left( \mathbf{1}_{\Omega_{n,0}} + \sum_{j=1}^{+ \infty} \mathbf{1}_{\Omega_{n,j}}\right)\right],\\
& = & \EE [ |2 \eta(X)-1| \mathbf{1}_{\{\Phi_n(X) \neq \Phi^*(X)\}} \mathbf{1}_{\Omega_{n,0}}]\\
&& +
\sum_{j=1}^{+ \infty} \EE[ |2 \eta(X)-1| \mathbf{1}_{\{\Phi_n(X) \neq \Phi^*(X)\}} \mathbf{1}_{\Omega_{n,j}},\\
& := & T_n +  \sum_{j=1}^{+\infty} R_{n,j}.
\end{eqnarray*}
\paragraph{Study of  $T_n$} The density is lower bounded by $n^{-\gamma}$  on $\Omega_{n,0}$. The proof of Theorem \ref{theo:Main2} yields (see (\ref{eq:TQn}) and (\ref{eq:TQn2})):
$$
T_n \leq n^{-(1+\alpha)\frac{1-\gamma}{2+d}}.
$$
\paragraph{Study of $R_{n,j}$} For any $j > J_0(n):=(1-\gamma) \frac{\log(n)}{\log(2)}$ and 
for any $x \in \Omega_{n,j}$ with $j>J_0 $, we have:
$$\mu(x) < n^{-\gamma} 2^{-(1-\gamma) \frac{\log(n)}{\log(2)}} =  1/n.$$ 
The Tail Assumption with $\psi=Id$ leads to:
$$
\sum_{j > J_0} R_{n,j} \leq \PP_X \left[ \mu(X) \leq n^{-1} \right] \lesssim n^{-1}.
$$
\paragraph{Study of $R_{n,j}, j \leq J_0(n)$} 
We consider the intermediary slices, and for $1 \leq j \leq J_0(n)$:
\begin{eqnarray*}
 R_{n,j} &=& \overbrace{\EE \left[ |2 \eta(X)-1| \mathbf{1}_{\{\Phi_n(X) \neq \Phi^*(X)\}}
 \left[ \mathbf{1}_{|\eta(X)-1/2| \leq \epsilon_{n,j}}\right]
  \mathbf{1}_{X \in \Omega_{n,j}}\right]}^{:=R_{n,j,1}} \\
  & & +  \underbrace{\EE \left[ |2 \eta(X)-1| \mathbf{1}_{\{\Phi_n(X) \neq \Phi^*(X)\}}
 \left[ \mathbf{1}_{|\eta(X)-1/2| > \epsilon_{n,j}}\right]
  \mathbf{1}_{X \in \Omega_{n,j}}\right]}_{:=R_{n,j,2}} 
\end{eqnarray*}
where $\epsilon_{n,j}$ will be chosen later.
To bound $R_{n,j,1}$, we use the fact that $|\eta-1/2| \leq \epsilon_{n,j}$ as well as the Tail Assumption
on the set $\Omega_{n,j} \subset \left\{
\mu(X) \leq n^{-\gamma} 2^{-j} \right\}$ to obtain:

\begin{equation}\label{eq:rnj1}
 R_{n,j,1} \leq 2 \epsilon_{n,j}  n^{-\gamma} 2^{-j}.
\end{equation}
Thanks to Proposition \ref{prop:use}, we can bound the term $R_{n,j,2}$ as follows:

$$
R_{n,j,2} \leq 4 \EE\left[ \mathbf{1}_{X \in \Omega_{n,j}} \exp \left(- 2 k_{n,j} {\lfloor \epsilon_{n,j} - \Delta_n(X)  \rfloor_{+}}^2 \right)\right].
$$
The term $\epsilon_{n,j}$ is then chosen such that $\epsilon_{n,j}-  \Delta_n(X) \leq \epsilon_{n,j}/2$. According to Proposition \ref{prop:Delta2}, we obtain:
 $$
 \epsilon_{n,j} = c \left(k_{n,j} \frac{n^{\gamma} 2^{j+1}}{n} \right)^{1/d},
 $$
where $c$ is chosen large enough. With this value, we obtain the following simplifications:
 
\begin{equation}\label{eq:rnj2}
\epsilon_{n,j}^2 k_{n,j} = c^2 \frac{k_{n,j}^{1+\frac{2}{d}} 2^{\frac{j+1}{d}}}{n^{\frac{1-\gamma}{d}}}
= c^2 
 \frac{k_{n,0}^{1+\frac{2}{d}}}{n^{\frac{1-\gamma}{d}}} 2^{- \frac{2j}{2+d} \left( 1+\frac{2}{d} \right)} 2^{(2j+2)/d} = c^2 2^{2/d}\log(n)^{1+2/d}.
\end{equation}
Taken together, \eqref{eq:rnj1} and \eqref{eq:rnj2} lead to:
$$
R_{n,j} \leq 2 \epsilon_{n,j} n^{-\gamma} 2^{-j} + 4  \exp(- c^2 2^{2/d}\log(n)^{1+2/d}) \PP(X \in \Omega_{n,j}).
$$
We then sum up all these terms for $j  \geq 1$:
\begin{eqnarray*}
R_n &\lesssim& \sum_{j =1}^{J_0} \frac{\log(n)^{1/2+1/d}}{\sqrt{k_{n,j}}} n^{-\gamma} 2 ^{ -j} +\frac{1}{n} \sum_{j=1}^{J_0} \PP(X \in \Omega_{n,j}), \\
& \lesssim & n^{-\gamma}\log(n)^{1/2+1/d}\sum_{j=1}^{J_0} 2^{-j} k_{n,j}^{-1/2}  +\frac{1}{n}, \\
& \lesssim & n^{-\gamma} n^{-\frac{1-\gamma}{2+d}}\log(n)^{1/2+1/d}\sum_{j=1}^{J_0} 2^{-j+\frac{j+1}{2+d}}  +\frac{1}{n}, \\
& \lesssim & n^{-\gamma} n^{-\frac{1-\gamma}{2+d}}\log(n)^{1/2+1/d}+\frac{1}{n}.
\end{eqnarray*}

We can see in this last upper bound that we obtain an improvement between the standard rule and the one fixed here since the term $n^{-\gamma}$ that appears in the tail of $\mu$ on the right hand side of \eqref{eq:finalcontrol} is transformed into $n^{-\gamma} \times n^{-\frac{1-\gamma}{2+d}}$ up to a log term.

\paragraph{Final equilibrium} We now fix the optimal value of $\gamma$ with the conjunction of the upper bounds for $R_n$ and $T_n$:
$$
\mathcal{R}(\Phi_n) - \mathcal{R}(\Phi^*)  \lesssim n^{-(1+\alpha)\frac{1-\gamma}{2+d}} + n^{-\gamma -\frac{1-\gamma}{2+d}}\log(n)^{1/2+1/d}+O(1/n).
$$
The balance equilibrium is reached with $(1+\alpha)\frac{1-\gamma}{2+d} = \gamma +\frac{1-\gamma}{2+d}$,
meaning that
$\gamma=\frac{1}{2+\alpha+d}$. This concludes the proof. \end{proof}

\subsection{Technical results}
\label{s:technical}

In the following, we use the result reported in \cite{Gyorfi78} that compares the excess risk of any classifier with the Bayes procedure.
\begin{prop}[\cite{Gyorfi78}]\label{prop:gyorfi}
For any classifier $\Psi$, we have:
$$
\mathcal{R}(\Psi) - \mathcal{R}(\Phi^*) = \EE \left[ |2 \eta(X)-1| \mathbf{1}_{\{\Psi(X) \neq \Phi^*(X)\}}\right].
$$
\end{prop}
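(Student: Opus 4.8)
The plan is to reduce everything to a pointwise (in $x$) computation of the conditional misclassification probability. First I would write, for any measurable $\Phi:\RR^{d}\to\{0,1\}$,
$$
\mathcal{R}(\Phi)=\PP\big(\Phi(X)\neq Y\big)=\EE_X\Big[\EE\big[\mathbf{1}_{\{\Phi(X)\neq Y\}}\,\big|\,X\big]\Big],
$$
and use that $Y\mid X=x\sim\mathcal{B}(\eta(x))$ to evaluate the inner conditional expectation as
$$
g_\Phi(x):=\EE\big[\mathbf{1}_{\{\Phi(x)\neq Y\}}\,\big|\,X=x\big]=\mathbf{1}_{\{\Phi(x)=1\}}\big(1-\eta(x)\big)+\mathbf{1}_{\{\Phi(x)=0\}}\eta(x).
$$
The statement then follows from the pointwise identity
$$
g_\Psi(x)-g_{\Phi^{*}}(x)=|2\eta(x)-1|\,\mathbf{1}_{\{\Psi(x)\neq\Phi^{*}(x)\}},\qquad\forall x\in\Omega,
$$
by integrating against $\PP_X$, since $\mathcal{R}(\Psi)-\mathcal{R}(\Phi^{*})=\EE_X[g_\Psi(X)-g_{\Phi^{*}}(X)]$.

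To obtain this identity I would distinguish the cases $\Psi(x)=\Phi^{*}(x)$ and $\Psi(x)\neq\Phi^{*}(x)$. In the first case $g_\Psi(x)-g_{\Phi^{*}}(x)=0$ and the right-hand side vanishes too. In the second case, recall $\Phi^{*}(x)=\mathbf{1}_{\{\eta(x)>1/2\}}$: if $\eta(x)>1/2$ then $\Phi^{*}(x)=1$, $\Psi(x)=0$, so $g_\Psi(x)-g_{\Phi^{*}}(x)=\eta(x)-(1-\eta(x))=2\eta(x)-1=|2\eta(x)-1|$; if $\eta(x)\le 1/2$ then $\Phi^{*}(x)=0$, $\Psi(x)=1$, so $g_\Psi(x)-g_{\Phi^{*}}(x)=(1-\eta(x))-\eta(x)=1-2\eta(x)=|2\eta(x)-1|$. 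In both sub-cases the difference equals $|2\eta(x)-1|$, as claimed. As a byproduct, $g_\Psi(x)\ge g_{\Phi^{*}}(x)$ for every $x$, which re-proves the optimality $\mathcal{R}(\Phi^{*})\le\mathcal{R}(\Phi)$ recalled in Section~\ref{s:setting}.

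I do not expect a genuine obstacle: this is the standard excess-risk decomposition for plug-in rules. The only points deserving a line of care are the tie-breaking on $\{\eta=1/2\}$ --- where $g_\Psi=g_{\Phi^{*}}$ whatever value $\Psi$ takes, so the indicator $\mathbf{1}_{\{\Psi\neq\Phi^{*}\}}$ and the factor $|2\eta-1|$ both vanish and the identity survives --- and the measurability of $x\mapsto g_\Phi(x)$ and of the event $\{\Psi\neq\Phi^{*}\}$, both immediate since $\eta$, $\Psi$ and $\Phi^{*}$ are measurable and $\mathcal{M}=\{0,1\}$ is finite. Tonelli's theorem, applied to the nonnegative integrand, then legitimizes passing from the pointwise identity to the stated expectation.
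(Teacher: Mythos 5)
Your proof is correct and is the standard conditioning argument: the paper itself gives no proof of Proposition \ref{prop:gyorfi}, simply citing \cite{Gyorfi78}, and your pointwise computation of $g_\Psi(x)-g_{\Phi^*}(x)$ followed by integration against $\PP_X$ is the canonical derivation. One cosmetic remark: on $\{\eta=1/2\}$ it is the factor $|2\eta(x)-1|$ that vanishes (the indicator $\mathbf{1}_{\{\Psi\neq\Phi^*\}}$ need not), but your case analysis already treats $\eta(x)\le 1/2$ correctly, so the identity holds as stated.
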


\vspace{1cm}

The following lemma is concerned with the concentration of the plug-in estimator $\hat{\eta}_n$ (see \eqref{eq:def_eta_n}). 

 \begin{lemma}[Concentration of $\hat{\eta}_n$] \label{lem:concentration} In the classification model, 
$$ \PP_{\otimes^n} \left( |\hat{\eta}_n(X) - \EE_{\otimes^n} \left(\eta_n(X)\right)|>s\right)\leq 2 \exp\left(-2k_{n}s^{2}\right).$$
\end{lemma}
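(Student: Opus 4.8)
The plan is to reduce the statement to a conditional one that exposes the defining structure of the classification (``Poisson sample-size'') model, and then to invoke Hoeffding's inequality. First I would condition on the design points $\mathcal{X}_n:=(X_1,\dots,X_n)$. For the fixed query point $X$, the $k_n$-nearest-neighbour reordering $i\mapsto(i)$ with respect to $\|\cdot-X\|$ is a deterministic function of $\mathcal{X}_n$: there are a.s.\ no ties among the distances $\|X_i-X\|$ because $\PP_X$ admits a density, and in any case ties may be broken by index. Hence, conditionally on $\mathcal{X}_n$, the labels $Y_{(1)}(X),\dots,Y_{(k_n)}(X)$ of the $k_n$ nearest neighbours are exactly the labels attached to a \emph{fixed} sub-collection of the sample, and therefore — this is the key structural feature of the classification model, contrasted in Section~\ref{s:compact} with the smooth discriminant analysis model — they are independent, with $Y_{(j)}(X)\sim\mathcal{B}\big(\eta(X_{(j)}(X))\big)$ and so valued in $\{0,1\}\subset[0,1]$.

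It then follows that, conditionally on $\mathcal{X}_n$, the estimator $\hat\eta_n(X)=\frac{1}{k_n}\sum_{j=1}^{k_n}Y_{(j)}(X)$ is an average of $k_n$ independent $[0,1]$-valued random variables whose conditional mean is $\EE_{\otimes^n}\!\left[\hat\eta_n(X)\mid\mathcal{X}_n\right]=\frac{1}{k_n}\sum_{j=1}^{k_n}\eta(X_{(j)}(X))$, which is the quantity written $\EE_{\otimes^n}(\eta_n(X))$ in the statement. Applying Hoeffding's inequality to this average — each of the $k_n$ summands having range $1$ — gives, for \emph{every} realisation of $\mathcal{X}_n$,
$$\PP_{\otimes^n}\!\left(\left|\hat\eta_n(X)-\EE_{\otimes^n}(\eta_n(X))\right|>s \;\big|\; \mathcal{X}_n\right)\leq 2\exp\!\left(-2k_n s^2\right).$$
Since the right-hand side does not depend on $\mathcal{X}_n$, taking the expectation over the design points yields the stated (unconditional) inequality.

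The only genuine obstacle is the one just navigated: the neighbour ranking and the label values are \emph{not} independent of one another when one looks at the whole sample, so one cannot directly assert that $Y_{(1)}(X),\dots,Y_{(k_n)}(X)$ are i.i.d.\ Bernoulli. Conditioning on the full position vector resolves this cleanly, because it freezes the ranking while leaving the conditional joint law of the labels a product of Bernoullis; this is precisely the property that breaks down in the smooth discriminant analysis setting, which is why this one-line argument fails there and a Poissonisation step is needed instead. The remaining ingredients — measurability of the reordering as a function of $\mathcal{X}_n$ and the a.s.\ absence of ties among the distances — are routine.
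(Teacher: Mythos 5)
Your proof is correct and follows essentially the same route as the paper's: condition on the design points, observe that the neighbour labels are then independent Bernoulli variables with parameters $\eta(X_{(j)}(X))$, apply Hoeffding's inequality conditionally, and integrate out the positions since the bound is deterministic. Your explicit identification of the centering term $\EE_{\otimes^n}\left(\eta_n(X)\right)$ with the conditional mean $\frac{1}{k_n}\sum_{j=1}^{k_n}\eta\left(X_{(j)}(X)\right)$ is precisely the reading the paper's own proof uses implicitly when it replaces the deviation event by $\bigl|\frac{1}{k_n}\sum_{i=1}^{k_n}\bigl[Y_{(i)}(X)-\eta(X_{(i)})\bigr]\bigr|>s$ before invoking Hoeffding.
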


\begin{proof}
Once again, we can observe that conditionally to the $(X_{(i)})_{1 \leq i \leq n}$, the corresponding labels $(Y_{(i)}(X))_{1 \leq i \leq n}$ are \textit{independent} Bernoulli random variables with respective parameters $\eta(X_{(i)})$. We can now use the Hoeffding inequality as follows:
\begin{eqnarray*}
\lefteqn{\PP_{\otimes^n} \left( |\hat{\eta}_n - \EE_{\otimes^n} \left(\eta_n(X)\right)|>s\right)} \\
&=& \EE_{\otimes^n} \left( \PP_{\otimes^n} \left( |\eta_n(X) - \EE_{\otimes^n} \left(\eta_n(X)\right)|>s \vert (X_1,\ldots, X_n) \right) \right). \\
& \leq  & \EE_{\otimes^n} \left( \PP_{\otimes^n}\left(\left|  \frac{1}{k_n} \left[ \sum_{i=1}^{k_n} Y_{(i)}(X) - \eta(X_{(i)}) \right]\right|> s
\vert (X_1,\ldots, X_n) \right)  \right). \\
& \leq &  \EE_{\otimes^n} \left( 2 \exp(- 2 k_n s^2) \vert (X_1,\ldots, X_n)  \right) \leq 2  \exp(- 2 k_n s^2).
\end{eqnarray*}
\end{proof}

\vspace{0.5cm}

We first state an important upper bound of the error rate when the design point $X$ is fixed.

\begin{prop}\label{prop:use}
For any $\epsilon>0$ and any $X \in \Omega$, if $\Delta_n(X):=|\mathbb{E}_{\otimes^n}\hat{\eta}_n(X) - \eta(X)|$, we have:
$$
\mathbf{1}_{\{|\eta(X) - \frac{1}{2}| \geq \epsilon\}} \,
\EE_{\otimes^n} \left[ 
\mathbf{1}_{\{\Phi_n(X) \neq \Phi^*(X)\}} 
 \right] \leq 2\mathbf{1}_{\{|\eta(X) - \frac{1}{2}| \geq \epsilon\}}  \,
 e^{- 2 k_n{\lfloor \epsilon - \Delta_n(X)\rfloor_{+}}^2 },
$$
where  $\lfloor a \rfloor_{+}$ refers to the positive part of any real number $a$.
\end{prop}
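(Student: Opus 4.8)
The plan is to condition on the event $\{|\eta(X)-\tfrac12|\geq \epsilon\}$ (outside it both sides are identically zero), split according to the value of the Bayes label $\Phi^*(X)$, and reduce the misclassification event to a large deviation of the plug-in estimator $\hat\eta_n(X)$ away from its conditional mean $\EE_{\otimes^n}\hat\eta_n(X)$, to which Lemma \ref{lem:concentration} applies. Throughout I will treat the design point $X$ as fixed, so that $\Delta_n(X)$ is a deterministic quantity and the only randomness is that of the sample $\mathcal S_n$.

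First I would handle the case $\eta(X)\geq \tfrac12+\epsilon$. Then $\Phi^*(X)=1$, and by the definition \eqref{eq:def_eta_n} the event $\{\Phi_n(X)\neq \Phi^*(X)\}$ is exactly $\{\hat\eta_n(X)\leq \tfrac12\}$. On that event $\eta(X)-\hat\eta_n(X)\geq \eta(X)-\tfrac12\geq \epsilon$, and writing $\hat\eta_n(X)-\EE_{\otimes^n}\hat\eta_n(X)=\bigl(\hat\eta_n(X)-\eta(X)\bigr)+\bigl(\eta(X)-\EE_{\otimes^n}\hat\eta_n(X)\bigr)$ together with $|\eta(X)-\EE_{\otimes^n}\hat\eta_n(X)|=\Delta_n(X)$ gives $\hat\eta_n(X)-\EE_{\otimes^n}\hat\eta_n(X)\leq -\epsilon+\Delta_n(X)$. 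When $\epsilon>\Delta_n(X)$ this forces $|\hat\eta_n(X)-\EE_{\otimes^n}\hat\eta_n(X)|\geq \epsilon-\Delta_n(X)=\lfloor \epsilon-\Delta_n(X)\rfloor_+$. The case $\eta(X)\leq \tfrac12-\epsilon$ is symmetric: there $\Phi^*(X)=0$, $\{\Phi_n(X)\neq\Phi^*(X)\}=\{\hat\eta_n(X)>\tfrac12\}$, and the same algebra with reversed inequalities yields $\hat\eta_n(X)-\EE_{\otimes^n}\hat\eta_n(X)\geq \epsilon-\Delta_n(X)$.

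Putting the two cases together, on $\{|\eta(X)-\tfrac12|\geq\epsilon\}$ with $\epsilon>\Delta_n(X)$ we get the inclusion $\{\Phi_n(X)\neq\Phi^*(X)\}\subseteq\{|\hat\eta_n(X)-\EE_{\otimes^n}\hat\eta_n(X)|\geq \lfloor\epsilon-\Delta_n(X)\rfloor_+\}$; taking $\EE_{\otimes^n}$ and applying Lemma \ref{lem:concentration} (with the threshold approached from below to accommodate its strict inequality) produces the bound $2\,e^{-2k_n\lfloor\epsilon-\Delta_n(X)\rfloor_+^2}$. Finally, in the degenerate regime $\epsilon\leq\Delta_n(X)$ one has $\lfloor\epsilon-\Delta_n(X)\rfloor_+=0$, so the asserted right-hand side is simply $2\,\mathbf{1}_{\{|\eta(X)-1/2|\geq\epsilon\}}$, and the inequality is trivial because $\EE_{\otimes^n}[\mathbf{1}_{\{\Phi_n(X)\neq\Phi^*(X)\}}]\leq 1$.

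There is no genuine obstacle here; the only points requiring care are bookkeeping ones, namely getting the sign of the deviation correct when combining $\hat\eta_n(X)-\eta(X)$ with $\Delta_n(X)$, matching the strict inequality in Lemma \ref{lem:concentration} with the non-strict threshold via a limiting argument, and isolating the trivial case $\epsilon\leq\Delta_n(X)$ so that the positive-part truncation is justified.
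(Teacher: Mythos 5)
Your proof is correct and follows essentially the same route as the paper: on $\{|\eta(X)-\tfrac12|\geq\epsilon\}$ the misclassification event forces $|\hat\eta_n(X)-\eta(X)|\geq\epsilon$, the triangle inequality transfers this to a deviation of size $\epsilon-\Delta_n(X)$ from $\EE_{\otimes^n}\hat\eta_n(X)$, and Lemma \ref{lem:concentration} gives the exponential bound. Your explicit case split on the sign of $\eta(X)-\tfrac12$ and your handling of the degenerate regime $\epsilon\leq\Delta_n(X)$ are just more detailed bookkeeping of the same argument.
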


\begin{proof}
In the event $ \left\lbrace \Phi_n(X) \not = \Phi^*(X)  \right\rbrace$, $\hat{\eta}_n(X)-1/2$ and $\eta(X)-1/2$ do not have the same sign. Therefore:
$$
\mathbf{1}_{\{|\eta(X) - \frac{1}{2}| \geq \epsilon\}} 
 \mathbf{1}_{\{\Phi_n(X) \neq \Phi^*(X)\}}    \leq  \mathbf{1}_{\{   |\eta(X)-1/2| \geq \epsilon\}} 
  \mathbf{1}_{ \{ |\hat{\eta}_n(X) - \eta(X)| \geq  \epsilon\}}.
$$
Then:
\begin{eqnarray*}
\lefteqn{\EE_{\otimes^n} \left[ 
\mathbf{1}_{\{\Phi_n(X) \neq \Phi^*(X)\}}\mathbf{1}_{\{|\eta(X) - \frac{1}{2}| \geq \epsilon\}} 
 \right]}\\
 &\leq & \EE_{\otimes^n} \left[\mathbf{1}_{\{  |\eta(X)-1/2| \geq \epsilon\}} 
  \mathbf{1}_{ \{ |\hat{\eta}_n(X) - \eta(X)| \geq  \epsilon\}}  \right] \\ 
  & = & \mathbf{1}_{\{|\eta(X) - \frac{1}{2}| \geq \epsilon\}}  \,  \PP_{\otimes^n} \left( |\hat{\eta}_n(X) - \eta(X)| \geq \epsilon \right) \\
  & \leq & \mathbf{1}_{\{|\eta(X) - \frac{1}{2}| \geq \epsilon\}} \,  \PP_{\otimes^n} \left( |\hat{\eta}_n(X) - \EE_{\otimes^n} \left(\hat{\eta}_n(X)\right)| \geq \epsilon - \Delta_n(X)  \right)  \\
\end{eqnarray*}
where  the last line follows from the triangular inequality and the definition of $\Delta_n(X)$.
Lemma \ref{lem:concentration} applied with $s= \lfloor \epsilon - \Delta_n(X) \rfloor_{+}$ now leads to the conclusion of the proof.
\end{proof}

\vspace{1cm}

This last proposition clearly underlines the effect of the bias term $\Delta_n(X)$ in the misclassification error rate. This bias term is then upper bounded by the next result.

\begin{prop}\label{prop:Delta2}
Assume that $\mu$ belongs to $\blabla$ and $\eta$ belongs to $\mathcal{C}^{1,0}(\Omega,L)$.
Then, a constant $C>0$ exists such that for any $a>0$:
$$
\left|\EE_{\otimes^n} \hat{\eta}_n (x) - \eta(x) \right| \leq L \left(\frac{2}{\kappa}\right)^{1/d}\left( \frac{ k_n}{n a } \right)^{1/d} +  2\exp \left(- \frac{3k_{n}}{14} \right),
$$
\color{black}
for all $x\in K_a$ where
$$K_{a}:=\left\{ x \in \RR^d \, \vert \, \mu(x) \geq a  \right\}.$$
\end{prop}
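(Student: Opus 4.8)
The plan is to first reduce the bound on $\Delta_n(x)=|\EE_{\otimes^n}\hat\eta_n(x)-\eta(x)|$ to a control of the distance to the $k_n$-th nearest neighbour of $x$, and then to use the Minimal Mass Assumption \textbf{A3} to make that distance small. For the reduction, I would condition on the positions $(X_1,\dots,X_n)$ (equivalently on the reordered positions $X_{(1)}(x),\dots,X_{(n)}(x)$): in the classification model the labels $Y_{(j)}(x)$ are then independent Bernoulli variables with parameters $\eta(X_{(j)}(x))$, so taking expectations in \eqref{eq:def_eta_n} gives $\EE_{\otimes^n}\hat\eta_n(x)=\EE_{\otimes^n}\big[k_n^{-1}\sum_{j=1}^{k_n}\eta(X_{(j)}(x))\big]$. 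Combining the triangle inequality, the Lipschitz bound from \textbf{A1}, the trivial bound $|\eta(u)-\eta(v)|\le 1$, and the monotonicity $\|X_{(j)}(x)-x\|\le\|X_{(k_n)}(x)-x\|$ for $j\le k_n$, this yields
\[
\Delta_n(x)\ \le\ \EE_{\otimes^n}\Big[\min\big(1,\ L\,\|X_{(k_n)}(x)-x\|\big)\Big].
\]

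Second, I would fix the radius $r_n:=\big(2k_n/(n\kappa a)\big)^{1/d}$ and split the expectation according to whether $\|X_{(k_n)}(x)-x\|\le r_n$ or not, bounding the integrand by $Lr_n$ on the first event and by $1$ on the second. This gives
\[
\Delta_n(x)\ \le\ L r_n+\mathbb{P}_{\otimes^n}\big(\|X_{(k_n)}(x)-x\|>r_n\big),
\]
and $Lr_n=L(2/\kappa)^{1/d}(k_n/(na))^{1/d}$ is precisely the first term in the claimed bound. So it only remains to estimate the probability that the $k_n$-th nearest neighbour lies outside $B(x,r_n)$, uniformly over $x\in K_a$.

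Third, note that $\{\|X_{(k_n)}(x)-x\|>r_n\}$ is exactly the event that strictly fewer than $k_n$ among $X_1,\dots,X_n$ fall in $B(x,r_n)$; hence $N_n:=\#\{i:\ X_i\in B(x,r_n)\}$ is $\mathrm{Bin}(n,p_n)$ with $p_n=\mathbb{P}_X(B(x,r_n))$. This is where \textbf{A3} is used: since $x\in K_a$, i.e. $\mu(x)\ge a$, and $r_n\le\delta_0$ for $n$ large enough (the finitely many remaining values of $n$ being absorbed in the overall constant), we get $p_n\ge\kappa\mu(x)r_n^d\ge\kappa a\,r_n^d=2k_n/n$, so $\EE N_n=np_n\ge 2k_n$. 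An elementary multiplicative Chernoff estimate for the lower tail of a Binomial then gives $\mathbb{P}_{\otimes^n}(N_n<k_n)\le\mathbb{P}_{\otimes^n}\big(N_n\le\tfrac12\EE N_n\big)\le 2e^{-3k_n/14}$. Plugging this into the previous display completes the proof.

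The hard part here is not any single computation but the uniformity over the whole slice $K_a$ combined with the fact that $\Omega$ may be unbounded: one cannot afford a $\mathrm{diam}(\Omega)$ factor when handling the far event. The key device that makes this work is keeping the truncation $\min(1,\cdot)$, so that the region $\{\|X_{(k_n)}(x)-x\|>r_n\}$ contributes only a probability, and then the Minimal Mass Assumption \textbf{A3} is exactly what converts ``mass of a small ball around $x$'' into the uniform lower bound $\EE N_n\ge 2k_n$, turning that probability into an exponentially small term independent of the position of $x$ in $K_a$.
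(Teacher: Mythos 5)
Your proposal is correct and follows essentially the same route as the paper: you reduce the bias to the distance $\|X_{(k_n)}(x)-x\|$ via the Lipschitz property, choose the radius $r_n=(2k_n/(n\kappa a))^{1/d}$ so that Assumption \textbf{A3} gives the small ball mass at least $2k_n/n$, and then control the probability that fewer than $k_n$ points fall in $B(x,r_n)$ by a binomial tail bound. The only (immaterial) difference is that you invoke a multiplicative Chernoff bound where the paper uses a Bennett-type inequality, which yields the same exponential term $2\exp(-3k_n/14)$ (in fact a slightly better exponent).
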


\begin{proof}  We first propose a control of the bias and then use a concentration inequality in order to obtain the bound. 
\paragraph{Decomposition of the bias} Let $x\in K_a$ be fixed. According to the definition of $\hat{\eta}_n(x)$ (see \eqref{eq:def_eta_n}):
$$ \EE_{\otimes^n} [\hat{\eta}_n(x)] =  \EE_{\otimes^n} \left[ \frac{1}{k_n} \sum_{j=1}^{k_n} Y_{(j)}(x) \right] 
= \EE_{\otimes^n} \left[ \frac{1}{k_n} \sum_{j=1}^{k_n} \eta(X_{(j)}) \right]. $$
Hence:
$$
\Delta_n(x)  =  \left| \EE_{\otimes^n}  [\hat{\eta}_n (x)] - \eta(x) \right|  = \left|  
 \EE_{\otimes^n} \left( \frac{1}{k_n} \sum_{i=1}^{k_n} \eta(X_{(i)})- \eta(X) \right) \right|.
$$
For any $t \geq 0$, we write:
$$
\Delta_n(x)  = \left|  
 \EE_{\otimes^n} \left( \frac{1}{k_n} \sum_{i=1}^{k_n} \eta(X_{(i)})- \eta(X) \right) \left( \mathbf{1}_{\|X_{k_n}-X\|<t} +  \mathbf{1}_{\|X_{k_n}-X\| \geq  t}\right)  \right|.
$$
Using the fact that $\eta$ and $\eta_n$ belong to $[0,1]$, we then have:
$$
\Delta_n(x)  \leq \left|\EE_{\otimes^n} \left( \frac{ \mathbf{1}_{\|X_{(k_n)}-x\|< t}}{k_n} \sum_{i=1}^{k_n} (\eta(X_{(i)})- \eta(x)) \right)  \right| +
\mathbb{P}_{\otimes^n}\left[ \|X_{(k_n)}-x\| \geq t\right].
$$
Now, since $\eta \in \mathcal{C}^{1,0}(\Omega,L)$:
\begin{eqnarray}
\Delta_n(n)  & \leq & \frac{1}{k_n} \sum_{i=1}^{k_n} \EE_{\otimes^n} \left[\mathbf{1}_{\|X_{(k_n)}-x\|<t} \left| \eta(X_{(i)})- \eta(x) \right|\right]\nonumber \\ &&+
\mathbb{P}_{\otimes^n}\left[ \|X_{(k_n)}-x\| \geq t\right], \nonumber \\
& \leq&  \frac{L}{k_n}\EE_{\otimes^n} \left[  \mathbf{1}_{\|X_{(k_n)}-x\|< t}  \sum_{i=1}^{k_n} \left\|X_{(i)}- x \right\| \right] +
\mathbb{P}_{\otimes^n} \left( \|X_{(k_n)}-x\| \geq  t\right), \nonumber \\
& \leq & tL + \mathbb{P}_{\otimes^n} \left( \|X_{(k_n)}-x\| \geq  t\right).
\label{eq:inter_delta}
\end{eqnarray}


\vspace{0.5cm}

\paragraph{Concentration inequality}
We now turn our attention to the control of the last term in the r.h.s. of (\ref{eq:inter_delta}). In the following, for all $x\in \Omega$ and for all $t>0$, $\mu(B(x,t))$ will denote the mass of the ball $B(x,t)$ w.r.t the measure $\mathbb{P}_X$, i.e.
$$ \mu(B(x,t)) := \int_{B(x,t)} \mu(z) dz.$$
Within this context, we sometimes omit the dependency of this quantity w.r.t. the point $x$ and write $\mu_t = \mu(B(x,t))$. Then:
\begin{eqnarray*}
\lefteqn{\mathbb{P}_{\otimes^n}\left( \left\| X_{(k_n)} - x\right\| \geq t \right)},\\
&=&\mathbb{P}_{\otimes^n}\left( \sum_{i=1}^{n}\mathbf{1}_{\{  X_{i}\in B(x,t) \} }\leq k_{n}\right),\nonumber\\
& =& \mathbb{P}_{\otimes^n} \left( \frac{1}{n} \sum_{i=1}^{n} \left[ \mathbf{1}_{\{  X_{i}\in B(x,t) \} }-\mu\left(B(x,t)\right) \right]\leq \frac{k_{n}}{n}-\mu\left(B(x,t)\right)\right),\\
& \leq &  \mathbb{P}_{\otimes^n} \left(\left| \frac{1}{n} \sum_{i=1}^{n} \left[ \mathbf{1}_{\{  X_{i}\in B(x,t) \} }-\mu\left(B(x,t)\right) \right] \right| \geq \mu(B(x,t)) - \frac{k_n}{n} \right),
\end{eqnarray*}
as soon as $\mu(B(x,t)) > \frac{k_n}{n}$. Since $\mathbb{P}_X \in \blabla$ and $x\in K_a$, 
$$ \mu(B(x,t)) \geq \kappa \mu(x) t^d \geq \kappa a t^d.$$
We therefore choose $t$  such that:
\begin{equation} 
 \kappa a t^d \geq 2 \frac{k_n}{n} \Leftrightarrow t\geq \left( 2\frac{k_n}{n} \frac{1}{\kappa a} \right)^{1/d}.
\label{eq:choice_t}
\end{equation}
In particular, with the choice of $t$ given by \eqref{eq:choice_t}, we obtain:
\begin{equation}
\mu(B(x,t)) - \frac{k_n}{n} \geq \frac{\mu(B(x,t))}{2}.
\label{eq:control_mu_t}
\end{equation}

 We then use the following version of the Bennett inequality. 
If $W_{i}$ are random variables such that $W_{i}\leq b$, $v=\sum_{i=1}^{n}\mathbb{E}(W_{i}^{2})$, let $S=\sum_{i=1}^{n}W_i-\mathbb{E}(W_{i})$ then for any $x>0$:
$$
 \mathbb{P}\left(S\geq x\right)\leq \exp\left(-\frac{x^{2}}{2(v+bx/3)}\right).
$$
We apply this version to the random variables  $Z_i=\frac{ \mathbf{1}_{\{  X_{i}\in B(x,t) \} }}{\sqrt{\mu_t(1-\mu_t)}}$. We then have $b=\frac{1}{\sqrt{\mu_t(1-\mu_t)}}$, $v=\frac{n}{1-\mu_t}$ and $x=\frac{n\sqrt{\mu_t}}{2\sqrt{(1-\mu_t)}}$. The exponential bound obtained is then 
$$
\exp\left(-\frac{\frac{n\mu_t}{4(1-\mu_t)}}{2(\frac{1}{1-\mu_t}+\frac{1}{\sqrt{\mu_t(1-\mu_t)}}\frac{\sqrt{\mu_t}}{2\sqrt{(1-\mu_t)}}/3)} \right)=\exp\left(-\frac{3n\mu_t}{28} \right).
$$
Hence:
\begin{eqnarray*}
\lefteqn{\mathbb{P}_{\otimes^n} \left( \left\| X_{(k_n)} - x\right\|  \geq t  \right)}\\
& \leq &  \mathbb{P}_{\otimes^n} \left(\left| \frac{1}{n} \sum_{i=1}^{n} \left[ \mathbf{1}_{\{  X_{i}\in B(x,t) \} }-\mu\left(B(x,t)\right) \right] \right| \geq \frac{\mu(B(x,t))}{2} \right),\\
& \leq &2\exp\left(-\frac{3n\mu_t}{28} \right).
\end{eqnarray*}
Now, using  (\ref{eq:control_mu_t})
 $\mu_t \geq 2k_n /n$, we obtain:
 \begin{eqnarray}\label{eq:last_bound}
\mathbb{P}_{\otimes^n} \left( \left\| X_{(k_n)} - x\right\| \geq t \right)
& \leq &2\exp\left(-\frac{3k_n\mu_t}{14} \right).
\end{eqnarray}
\paragraph{Final Bound}
According to (\ref{eq:inter_delta}) and (\ref{eq:last_bound}), we obtain:
\begin{eqnarray*}
\Delta_n(X) & \leq &  L t +2 \exp \left(- \frac{3k_{n}}{14} \right),\\
& \leq &  L \left(\frac{2}{\kappa}\right)^{1/d}\left( \frac{ k_n}{n a } \right)^{1/d} +  2\exp \left(- \frac{3k_{n}}{14} \right). 
\end{eqnarray*}
\color{black}

This concludes the proof of Proposition \ref{prop:Delta2}.
\end{proof}


\newpage

\section{The smooth discriminant analysis model}
\label{s:sda}

\subsection{Statistical setting}\label{s:poisson}
In this subsection, we focus our attention on an alternative binary classification model. We assume that we have two independent samples $\mathcal{S}_1=(X_1,\dots,X_n)$ and $\mathcal{S}_2=(\tilde X_1,\dots,\tilde X_n)$ of i.i.d. random variables at our disposal, with respective densities $f$ and $g$. We assume that the support of $f$ and $g$ are included in a set $K$. In the following, we can label each element of the sample $\mathcal{S}_1$ (resp. $\mathcal{S}_2$) by $0$ (resp. $1$). 

 In this context, given a new incoming observation, the goal is to predict its corresponding label, namely to determine whether $X\sim f$ or $ X\sim g$. This setting is known as a smooth discriminant analysis model and has been popularized, in particular, by Mammen and Tsybakov (1999).\\

As in the classical binary classification model, a classifier is defined as a measurable function of the samples $\mathcal{S}_1$ and $\mathcal{S}_2$, having values in $\lbrace 0,1 \rbrace$. The risk of each classifier $\Phi_n$ is defined as:
$$ \mathcal{R}(\Phi_n) :=    \frac{1}{2}\left[ \int_{\lbrace x:\Phi_n(x)=1 \rbrace} f(x)dx +  \int_{\lbrace x:\Phi_n(x)=0 \rbrace} g(x)dx \right].$$
It can then be proved that the Bayes Classifier defined as:
$$ \Phi^\star(x) = \mathbf{1}_{\lbrace f(x) \geq g(x) \rbrace},$$
provides the smallest possible risk w.r.t. all possible classifiers. At this step, if we rewrite the Bayes classifier as:
$$ \Phi^\star(x) = \mathbf{1}_{\lbrace \eta(x)\geq \frac{1}{2} \rbrace}, \ \mathrm{where} \ \eta(x) = \frac{f(x)}{f(x)+g(x)},$$
A strong analogy with the classical binary classification problem defined in Section \ref{s:model} (see in particular (\ref{eq:Bayes_classic})) can be observed. The next assumption is equivalent to Assumption $A1$.

\paragraph{Assumption $\mathbf{\widetilde{A}1}$} \textit{An $L$ exists such that $\eta=\frac{f}{f+g}$ belongs to $\mathcal{C}^{1,0}(\Omega,L)$.}

In keeping with the previous study, we work with both a Margin and a Smoothness Assumption on the regression function $\eta$. Once again, the Margin Assumption is quite close to the one introduced for the classical binary classification problem. \\

\paragraph{Assumption $\mathbf{\widetilde{A}2}$} \textit{An $\alpha>0$  and a constant $C>0$ exist such that}

$$
\int_{|\eta-\frac12|<\epsilon} (f+g) \leq C \epsilon^{\alpha} $$.

We also consider the case where the marginal density on $X$ satisfies a Minimal Mass Assumption.
\\
\paragraph{Assumption $\mathbf{\widetilde{A}3}$} \textit{A $\kappa$ exists such that $\mu=\frac{f+g}{2}$ satisfies the condition introduced by $\blabla$.}

\vspace{1cm}
In this context, the principle of the Nearest Neighbor Algorithm introduced in Section \ref{sec:knndef} remains the same. We first aggregate the two samples $\mathcal{S}_1$ and $\mathcal{S}_2$ to obtain $ \mathcal{S} = \left\{ (X_1, 0), \ldots (X_n,0),(\tilde{X}_1,1), \ldots, (\tilde{X}_n,1)\right\}=(\mathcal{X}_i,\mathcal{Y}_i)_{1 \leq i \leq 2n}$. Then, if $\mathcal{X}_{(m)}(x)$ is the $m$-nearest neighbor of $x$ and 
$\mathcal{Y}_{(m)}(x)$ is its label, the K nearest neighbor decision rule is
\begin{equation}\label{eq:kNN_SDA}
\Phi_{n,k}(x)=\left\{\begin{matrix}
1&if& \displaystyle\frac1k\sum_{j=1}^{k}\mathcal{Y}_{(j)}(x)>\frac12,\\
0&otherwise.
\end{matrix}\right.
\end{equation}

\subsection{Case of bounded from below densities}

The following theorem provides a control on its corresponding minimax excess risk when the underlying density on $X$ is bounded from below by a strictly positive constant.

\begin{theo}\label{theo:Main1bis} Assume that Assumptions $\mathbf{\tilde{A}1-\tilde{A}3}$ hold and that the density of $X$ is lower bounded by $\mu_{-}>0$. If $k_n = \lfloor n^{\frac{2}{2+d}}\rfloor$
, then
$$
\left[ \mathcal{R}(\Phi_{n,k_n})- \mathcal{R}(\Phi^{*}) \right] \lesssim \left(\frac{\log n}{n}\right)^{\frac{1+\alpha}{2+d}} .
$$
\end{theo}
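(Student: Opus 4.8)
The plan is to reduce the smooth discriminant analysis model to the classification setting of Theorem \ref{theo:Main1} by a Poissonization of the two sample sizes, then de-Poissonize; the latter step is responsible for the extra logarithmic factor.

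\textbf{Setup.} First I would record the analogue of Proposition \ref{prop:gyorfi} for the risk $\mathcal{R}^{Binom}$: with $\mu=(f+g)/2$ and $\eta=f/(f+g)$ one checks that $\mathcal{R}^{Binom}(\Phi)-\mathcal{R}^{Binom}(\Phi^{*})=\mathbb{E}_X[\,|2\eta(X)-1|\,\mathbf{1}_{\{\Phi(X)\neq\Phi^{*}(X)\}}\,]$, where the new observation $X$ has density $\mu$ (the factor $1/2$ in $\mathcal{R}^{Binom}$ being exactly matched by $|f-g|=2\mu|2\eta-1|$). Hence, exactly as in the proof of Theorem \ref{theo:Main1}, the excess risk splits for any $\epsilon>0$ as $T_{1,\epsilon}+T_{2,\epsilon}$ with $T_{1,\epsilon}\leq 2C\epsilon^{1+\alpha}$ by Assumption $\mathbf{\widetilde{A}2}$, and $T_{2,\epsilon}$ is controlled, slice by slice over $\Bej=\{2^{j-1}\epsilon\leq|\eta-1/2|\leq 2^{j}\epsilon\}$, once one bounds $\mathbb{P}_{\otimes^n}(\Phi_{n,k_n}(x)\neq\Phi^{*}(x))$ for fixed $x$ with $|\eta(x)-1/2|\geq\epsilon$. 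The obstruction is that, conditionally on the ordered positions, the labels $(\mathcal{Y}_{(j)}(x))_{j}$ are \emph{not} independent, because the two sample sizes are fixed, so Lemma \ref{lem:concentration} does not apply directly.

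\textbf{Poissonization.} I would replace the deterministic sizes by $N_1\sim\mathrm{Poisson}(n)$ and $N_2\sim\mathrm{Poisson}(n)$, independent. The superposition of the two samples is then a Poisson point process of intensity $n(f+g)=2n\mu$, and by the coloring theorem its labels are, conditionally on the unlabelled positions, \emph{independent} Bernoulli variables. In this Poissonized model the whole proof of Theorem \ref{theo:Main1} goes through: the analogue of Lemma \ref{lem:concentration} follows from Hoeffding applied conditionally on the positions (on the event $\{N_1+N_2\geq k_n\}$, of probability $\geq 1-e^{-cn}$), and Proposition \ref{prop:Delta2} holds because the count of points in $B(x,t)$ is $\mathrm{Poisson}(2n\mu(B(x,t)))$ with $\mu(B(x,t))\geq\kappa\mu_-t^{d}$ by $\mathbf{\widetilde{A}3}$, so a Poisson tail bound gives $\Delta_n(x)\lesssim(k_n/(n\mu_-))^{1/d}+e^{-ck_n}$ on $\{\mu\geq\mu_-\}$. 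One thus recovers $\lesssim n^{-(1+\alpha)/(2+d)}$ in the Poissonized model with $k_n=\lfloor n^{2/(2+d)}\rfloor$.

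\textbf{De-Poissonization (the main obstacle).} The original model is the Poissonized one conditioned on $\{N_1=n,N_2=n\}$, and a crude conditioning costs a factor $\mathbb{P}(N_i=n)^{-1}\asymp\sqrt n$ per class in the tail probabilities. To keep this controlled I would instead couple the fixed-size samples with a Poisson process of inflated intensity $n+C\sqrt{n\log n}$ per class, so that $\mathcal{S}^{\mathrm{fixed}}\subseteq\mathcal{S}^{P}$ with probability $\geq 1-n^{-2}$; on that event the $k_n$ nearest neighbours of $x$ in $\mathcal{S}^{\mathrm{fixed}}$ lie among the $k_n+C\sqrt{n\log n}$ nearest in $\mathcal{S}^{P}$, whose labels are conditionally independent, so Hoeffding together with the Lipschitz/minimal-mass control of the $k_n$-NN radius yields $\mathbb{P}_{\otimes^n}(\Phi_{n,k_n}(x)\neq\Phi^{*}(x))\lesssim n^{-2}+\exp\big(-c\,k_n\lfloor\epsilon-\Delta_n(x)-c'\sqrt{\log n/n}\,\rfloor_+^2\big)$. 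Inserting this into $T_{2,\epsilon}$, summing over $j$ (using $\Bej=\emptyset$ once $2^{j-1}\epsilon>1$ to absorb the $n^{-2}$ term, and the Margin Assumption for each slice), and finally choosing $\epsilon=\epsilon_n\asymp(\log n/n)^{1/(2+d)}$ — so that $\epsilon_n\geq 2\Delta_n(x)$ and the exponential series converges — one obtains $T_{1,\epsilon_n}+T_{2,\epsilon_n}\lesssim\epsilon_n^{1+\alpha}\lesssim(\log n/n)^{(1+\alpha)/(2+d)}$, which is the claim. I expect the delicate point to be precisely this transfer: handling the $\sqrt{n\log n}$ slack in both the number of neighbours and the conditional label probabilities without losing more than a logarithm, which is exactly the gap between Theorem \ref{theo:Main1bis} and the log-free Theorem \ref{theo:Main1}.
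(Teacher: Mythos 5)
Your setup and your Poissonization are exactly the paper's: Proposition \ref{prop:risk_dec} plays the role of Proposition \ref{prop:gyorfi}, and the paper likewise restores conditional independence of the labels by drawing two independent Poisson$(n)$ sample sizes (Proposition \ref{prop:poisson}). The divergence, and the genuine gap, is in the transfer back to fixed sample sizes. Knowing that the $k_n$ fixed-sample neighbours of $x$ lie among the $K:=k_n+C\sqrt{n\log n}$ nearest points of the inflated Poisson process, whose labels are conditionally independent, does not let you apply Hoeffding to their average: the average of a data-dependent $k_n$-subset of $K$ conditionally independent Bernoulli variables can be shifted from $\eta(x)$ by order $\min\left(1,(K-k_n)/k_n\right)\asymp\min\left(1,\sqrt{\log n}\,n^{1/2-2/(2+d)}\right)$, which in every dimension $d\geq 1$ is far larger than the threshold $\epsilon_n\asymp n^{-1/(2+d)}$ you need to beat — not the $c'\sqrt{\log n/n}$ you wrote. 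The shift you claim would require controlling the \emph{local} number of surplus Poisson points inside the (random) $k_n$-NN ball of the fixed sample, of order $k_n\sqrt{\log n/n}$ plus fluctuations, which is a different statement and is nowhere argued. A second unaddressed point: the fixed sample is obtained from the Poisson process by keeping exactly $n$ points of each class, so conditionally on the positions the membership indicators are correlated with the labels; conditional independence holds for the Poisson labels, not for the label average along the selected subsample, and you never decouple the two (say, by comparing with the Poisson $k_n$-NN average and bounding the number of disagreements after conditioning on the labels). A symptom that the step was not carried through is your final calibration: with $\Delta_n(x)\lesssim(k_n/n)^{1/d}$ and a $\sqrt{\log n/n}$ shift, your own bound would allow $\epsilon_n\asymp n^{-1/(2+d)}$ with no logarithm, so the choice $\epsilon_n\asymp(\log n/n)^{1/(2+d)}$ is imposed to match the statement rather than produced by the argument.

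For contrast, the paper does not couple at all: since the fixed-size model is the Poissonized one conditioned on $\{N_1=N_2=n\}$, it uses the crude bound $\PP(A\mid N_1=N_2=n)\leq\PP(A)/\PP(N_1=N_2=n)\leq 2\pi n\,\PP(A)$ (Stirling), i.e. it accepts precisely the polynomial factor you were trying to avoid, handles the event $\{N<k_n\}$ by a Poisson Chernoff bound (the $e^{-n}$ term in Proposition \ref{prop:poisson}), and then kills the factor $n$ by requiring $k_n\epsilon_n^2\asymp\log n$ in the slice decomposition; combined with the bias constraint $\epsilon_n\gtrsim(k_n/n)^{1/d}$ this forces $\epsilon_n\asymp(\log n/n)^{1/(2+d)}$ and is exactly where the logarithm in Theorem \ref{theo:Main1bis} comes from. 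Your coupling idea is attractive — if the two issues above (local control of surplus points, selection/label dependence) were repaired it would even remove the logarithm, which the authors themselves conjecture is removable — but as written the key inequality for $\PP_{\otimes^n}\left(\Phi_{n,k_n}(x)\neq\Phi^{*}(x)\right)$ is asserted rather than proved, so the proposal does not yet constitute a proof.
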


We can immediately observe that we lose a log term compared to the minimax rate obtained in Section \ref{s:rates} for the  classification model.

According to our knowledge, the optimal rates in the present model (that is, the smooth discriminant analysis) have never been investigated under Assumptions $\mathbf{\widetilde{A}1-\widetilde{A}3}$. Nevertheless, it appears that under some slightly different assumptions (see \cite{Tsybakov_tout_seul} and \cite{AudTsy}), the minimax risk of the smooth discriminant analysis problem is the same as the one of the classification model. It is therefore reasonable to think that our rate is near-optimal (optimal up to a log term).


From a technical point of view, the main counterpart when using the smooth discriminant analysis setting is that conditionally to the spatial positions of the ordered  aggregate sample $(X_{(j)}(x))_{1 \leq j \leq 2n}$, the corresponding labels $(Y_{(j)}(x))_{1 \leq j \leq 2n}$ are no longer independent of each other. This is a significant difference with the classification model considered in Section \ref{s:model} and makes it quite difficult to obtain a concentration inequality for the empirical ``regression" function:
$$
\eta_{n}(x) = \frac{1}{k_n} \sum_{j=1}^{k_n} Y_{(j)}(x).
$$
In order to get around this problem, we adopt a Poisson embedding (see Subsection \ref{s:poisson_tech} for further details), which makes it possible to satisfy a
control of the excess risk, up to some additional logarithmic terms. We assume that such a logarithmic term may be removed using some concentration inequalities on negatively associated random variables. In fact, we hypothesize that for asymptotically large $n$, the random sequence $(\sum_{j=1}^p Y_{(j)}(x))_{1 \leq p \leq k_n}$ is negatively associated as soon as $k_n/n \longmapsto 0$, which may make it possible to remove the logarithmic term (see \textit{e.g.} \cite{Kleinphd},\cite{MR1777538}).

\subsection{Case of general densities}
We provide  a short paragraph here about the case of general densities and introduce the Tail Assumption necessary to derive a uniform rate of consistency for the nearest neighbor rule.

\begin{theo}\label{theo:Main2bis} Assume that Assumptions $\mathbf{\widetilde{A}1-\widetilde{A}3}$ hold and that a function $\psi$ exists such that $\mathbb{P}_X \in \PTf$. Then:
$$
\left[ \mathcal{R}(\Phi_n)- \mathcal{R}(\Phi^{*}) \right] \lesssim \left(\frac{\log n}{n}\right)^{\frac{1+\alpha}{3+\alpha+d}} \qquad \text{if} \qquad \psi=Id.
$$
Otherwise:
$$
\left[ \mathcal{R}(\Phi_n)- \mathcal{R}(\Phi^{*}) \right] \lesssim \nu_{n,\alpha,d}^{1+\alpha} \quad \text{where} \quad \frac{\log n}{n} = \psi^{-1}(\{\nu_{n,\alpha,d}\}^{1+\alpha}) \{\nu_{n,\alpha,d}\}^{2+d}.$$
\end{theo}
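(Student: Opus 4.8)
The plan is to reduce the smooth discriminant analysis problem to the already-proved classification bounds (Theorems~\ref{theo:Main2} and \ref{theo:Gene1}) via a Poissonization argument, paying an extra logarithmic factor for the dependence among labels. First I would recall the structure used in the bounded-below case (Theorem~\ref{theo:Main1bis}): we aggregate $\mathcal{S}_1$ and $\mathcal{S}_2$ into a single sample $(\mathcal{X}_i,\mathcal{Y}_i)_{1\le i\le 2n}$ whose spatial marginal has density $\mu=(f+g)/2$ and whose conditional label law, \emph{given the ordered positions}, is a dependent Bernoulli vector with marginal parameters $\eta(\mathcal{X}_{(j)})=f(\mathcal{X}_{(j)})/(f+g)(\mathcal{X}_{(j)})$. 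The only obstacle compared with the genuine classification model is that Lemma~\ref{lem:concentration} (Hoeffding for i.i.d.\ Bernoulli labels) no longer applies to $\hat\eta_n(x)=\frac1{k_n}\sum_{j=1}^{k_n}\mathcal{Y}_{(j)}(x)$.

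The main step is therefore a substitute for Lemma~\ref{lem:concentration} obtained by Poisson embedding. I would replace the two fixed-size samples of cardinal $n$ by two independent Poisson point processes on $\RR^d$ with intensities $nf$ and $ng$; conditionally on the total number of points in any region, these are distributed as the original samples, and on \emph{disjoint} regions the label counts become genuinely independent. Standard Poissonization comparison (as in \cite{Kac}) shows that the Poissonized sample differs from the size-$n$ sample only on an event of probability that, for the relevant ball $B(x,t)$ with $\mu(B(x,t))\gtrsim k_n/n$, is controlled once we inflate the working radius by a $\sqrt{\log n}$ factor — equivalently, we run the argument with $k_n$ replaced by $k_n/\log n$ (or $\epsilon_n$ replaced by $\epsilon_n\sqrt{\log n}$). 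Concretely: in the decomposition of the excess risk from the proof of Theorem~\ref{theo:Main2}, the bias term $\Delta_n(x)$ is handled exactly as in Proposition~\ref{prop:Delta2} (it only uses $\mu\in\blabla$, which holds by $\mathbf{\widetilde A3}$), while the stochastic term in Proposition~\ref{prop:use} gets the weaker deviation bound $\exp(-2(k_n/\log n)\lfloor\epsilon-\Delta_n(x)\rfloor_+^2)$. This is the heart of the proof and the place where the $\log n$ is lost.

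With this modified concentration bound in hand, the rest is bookkeeping identical to Sections~\ref{s:preuve_noncompact}. I would split $\RR^d=R_n\cup Q_n$ with $R_n=\{\mu\le a_n\}$, bound $T_{R_n}\le\PP_X(\mu(X)\le a_n)\le\psi(a_n)$ by the Tail Assumption $\mathbf{A4}$ / $\mathbf{\widetilde A2}$-type margin control, and on $Q_n$ run the series argument of Theorem~\ref{theo:Main1} with the replacement $k_n\rightsquigarrow k_n/\log n$, so that the balance equations become $\epsilon_n\sim(k_n\log n/(na_n))^{1/d}$ and $k_n=\epsilon_n^{-2}\log n$, yielding $T_{Q_n}\lesssim\epsilon_n^{1+\alpha}$ with $\epsilon_n$ solving $\epsilon_n^{2+d}\psi^{-1}(\epsilon_n^{1+\alpha})\sim\log n/n$. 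Optimizing $a_n$ exactly as in the proof of Theorem~\ref{theo:Gene1}~ii) (taking $a_n=(\log n)/(n\nu_n^{2+d})$) gives the stated rate $\nu_{n,\alpha,d}^{1+\alpha}$ with $\frac{\log n}{n}=\psi^{-1}(\nu_{n,\alpha,d}^{1+\alpha})\nu_{n,\alpha,d}^{2+d}$, and specializing to $\psi=\mathrm{Id}$ recovers $(\log n/n)^{(1+\alpha)/(3+\alpha+d)}$. The one genuinely delicate point to write carefully is the Poissonization error estimate: one must check that the event on which the Poissonized and fixed-size neighborhoods of $x$ disagree contributes a term that, after the $\log n$ inflation, is negligible uniformly over $x\in Q_n$ — this follows from a Chernoff bound on a Poisson random variable of mean $\asymp k_n$ deviating by a constant factor, together with the fact that $k_n\to\infty$ like a positive power of $n$, so the bad-event probability is super-polynomially small and is absorbed into the $O(1/n)$ remainder.
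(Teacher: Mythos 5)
Your global plan is the paper's: the paper itself states that Theorem \ref{theo:Main2bis} is proved by combining the Poissonization device of Theorem \ref{theo:Main1bis} with the tail/threshold bookkeeping of Theorems \ref{theo:Main2} and \ref{theo:Gene1} ii), and your final balance ($k_n\epsilon_n^2\asymp\log n$, $a_n\asymp\log n/(n\nu_n^{2+d})$, hence $\frac{\log n}{n}=\psi^{-1}(\nu_{n,\alpha,d}^{1+\alpha})\nu_{n,\alpha,d}^{2+d}$) reproduces the stated rates. The gap is in the one step you yourself flag as delicate: the transfer of the concentration bound from the Poissonized sample back to the fixed-size sample. The paper does not couple the two samples at all; it writes $\PP(|\hat\eta_n(x)-\cdot|>t)=\PP(|\eta_n^{N_1,N_2}(x)-\cdot|>t\mid N_1=N_2=n)\le \PP(|\eta_n^{N}(x)-\cdot|>t)/\PP(N_1=N_2=n)$ and then applies Hoeffding in the Poissonized (conditionally independent) world, so the deviation bound keeps the exponent $-2k_nt^2$ but acquires the polynomial prefactor $2\pi n$ from Stirling; the logarithm enters only because one must take $k_n\epsilon_n^2\asymp\log n$ to beat that prefactor (Proposition \ref{prop:poisson}). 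Your mechanism --- ``the Poissonized sample differs from the size-$n$ sample on an event controlled once the radius is inflated by $\sqrt{\log n}$, equivalently replace $k_n$ by $k_n/\log n$'' --- is not a proof and is not obviously repairable as stated: under any natural coupling the two samples disagree in $\asymp\sqrt{n\log n}$ points (the fluctuation of a Poisson variable of mean $n$, not of mean $k_n$ as your closing sentence asserts), which can vastly exceed $k_n$, so one would still have to show that only the discrepant points falling inside the $k_n$-NN ball of $x$ matter and that their number is $o(k_n\epsilon_n)$ uniformly over $x\in Q_n$; the ``Chernoff bound on a Poisson of mean $\asymp k_n$'' you invoke controls the $k_n$-NN radius (the analogue of the $e^{-3k_n/14}$ term), not this de-Poissonization error.

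A second, smaller gap: you claim the bias $\Delta_n(x)$ is ``handled exactly as in Proposition \ref{prop:Delta2}.'' That proposition computes $\EE_{\otimes^n}\hat\eta_n(x)$ via $\EE[Y_{(j)}(x)\mid \text{positions}]=\eta(X_{(j)}(x))$, which is false in the fixed-size discriminant model (given all positions the labels form a dependent assignment with exactly $n$ of each class). The bias must therefore also be evaluated inside the Poissonized model, where the random total count $N$ forces the extra step the paper takes in Proposition \ref{prop:poisson} ii): conditioning on $\{N\ge m_n\}$ with $m_n=2n-(2n)^{\beta}$, a Chernoff bound on the lower Poisson tail, and only then the analogue of Proposition \ref{prop:Delta2}. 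With these two steps done as in the paper, the rest of your bookkeeping goes through and yields the claimed rates.
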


We only provide the proof of Theorem \ref{theo:Main1bis}: the one of Theorem \ref{theo:Main2bis}
relies on a mixed association of some arguments in Theorem \ref{theo:Main2} and Theorem \ref{theo:Main1bis}.

\subsection{Proof of Theorem \ref{theo:Main1bis}}\label{s:poisson_tech}
A straightforward decomposition yields the following proposition.
\begin{prop}\label{prop:risk_dec}
For any classifier $\Psi$, we have:
$$
\mathcal{R}(\Psi) - \mathcal{R}(\Phi^*) = \int_{\Psi \neq \Phi^*} \left| 2 \eta(x)-1\right| \frac{f(x)+g(x)}{2} dx.
$$
\end{prop}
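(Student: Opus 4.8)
The plan is to reduce the identity to an elementary pointwise computation, exactly as in the classical binary case (Proposition~\ref{prop:gyorfi}). Write $\mu(x) = \tfrac12\big(f(x)+g(x)\big)$ for the marginal density of $X$. From $\eta = f/(f+g)$ we get $2\eta(x)-1 = \big(f(x)-g(x)\big)/\big(f(x)+g(x)\big)$, so that $|2\eta(x)-1|\,\mu(x) = \tfrac12\,|f(x)-g(x)|$ (both sides being $0$ where $f+g=0$), and $\Phi^*(x) = \mathbf{1}_{\{f(x)\ge g(x)\}} = \mathbf{1}_{\{2\eta(x)-1\ge 0\}}$ is precisely the rule minimising the pointwise contribution to $\mathcal{R}$. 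Hence it is enough to show
$$
\mathcal{R}(\Psi) - \mathcal{R}(\Phi^*) = \int_{\{\Psi\neq\Phi^*\}} \tfrac12\,|f(x)-g(x)|\,dx .
$$

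First I would put the risk of a generic classifier $\Phi$ in a form in which the dependence on $\Phi$ is isolated. Using $\mathbf{1}_{\{\Phi=0\}} = 1 - \mathbf{1}_{\{\Phi=1\}}$ in the definition of $\mathcal{R}$ gives
$$
2\,\mathcal{R}(\Phi) = \int f(x)\,dx - \int \mathbf{1}_{\{\Phi(x)=1\}}\,\big(f(x)-g(x)\big)\,dx ,
$$
where the first term does not depend on $\Phi$. Writing this for $\Psi$ and for $\Phi^*$ and subtracting, the $\int f$ term cancels and one is left with
$$
2\big(\mathcal{R}(\Psi) - \mathcal{R}(\Phi^*)\big) = \int \big(\mathbf{1}_{\{\Phi^*(x)=1\}} - \mathbf{1}_{\{\Psi(x)=1\}}\big)\,\big(f(x)-g(x)\big)\,dx .
$$

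The heart of the argument is then the pointwise identity
$$
\big(\mathbf{1}_{\{\Phi^*(x)=1\}} - \mathbf{1}_{\{\Psi(x)=1\}}\big)\,\big(f(x)-g(x)\big) = |f(x)-g(x)|\,\mathbf{1}_{\{\Psi(x)\neq\Phi^*(x)\}},
$$
which I would verify case by case: when $\Psi(x)=\Phi^*(x)$ the prefactor vanishes; when $\Psi(x)=1,\ \Phi^*(x)=0$ we have $f(x)<g(x)$ by definition of $\Phi^*$ and the prefactor equals $-1$, so the product is $-(f-g)=|f-g|$; symmetrically, when $\Psi(x)=0,\ \Phi^*(x)=1$ we have $f(x)\ge g(x)$, the prefactor equals $+1$, and the product is $f-g=|f-g|$. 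Integrating this identity and halving yields the displayed reduction, and substituting $\tfrac12|f-g| = |2\eta-1|\,\mu$ gives Proposition~\ref{prop:risk_dec}. There is no genuine obstacle here: the computation is the direct analogue of the classical identity of \cite{Gyorfi78}, and the only point requiring a little care is the sign/label bookkeeping — one must work with the labelling convention under which $\Phi^*$ is the risk-minimising rule, so that on $\{\Psi\neq\Phi^*\}$ the prefactor $\mathbf{1}_{\{\Phi^*=1\}} - \mathbf{1}_{\{\Psi=1\}}$ always carries the same sign as $f-g$ — after which the decomposition is purely computational.
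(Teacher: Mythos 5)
Your proof is correct, and it is exactly the ``straightforward decomposition'' that the paper invokes without writing out: the discriminant-analysis analogue of Proposition~\ref{prop:gyorfi}, obtained by isolating the $\Phi$-dependent part of the risk, checking the pointwise identity $\bigl(\mathbf{1}_{\{\Phi^*=1\}}-\mathbf{1}_{\{\Psi=1\}}\bigr)(f-g)=|f-g|\,\mathbf{1}_{\{\Psi\neq\Phi^*\}}$ case by case, and substituting $\tfrac12|f-g|=|2\eta-1|\,\tfrac{f+g}{2}$. Your caveat about label bookkeeping is also the right call: taken literally, the paper's displayed definition of $\mathcal{R}$ (which pairs $f$ with $\{\Phi=1\}$ and $g$ with $\{\Phi=0\}$) is inconsistent with $\Phi^*=\mathbf{1}_{\{f\geq g\}}$ being the minimizer and would flip the sign of the excess risk, so working under the convention that makes $\eta=f/(f+g)$ the conditional probability of label $1$, as you do, is the intended reading and the one under which the stated identity holds.
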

\begin{proof}[Proof of Theorem \ref{theo:Main1bis}]
The  beginning of the proof is similar to the one of Theorem \ref{theo:Main1}. We use Proposition \ref{prop:risk_dec} and
Assumption $\mathbf{\widetilde{A}3}$ to obtain:
\begin{eqnarray}\label{eq:excess_sda}
\mathcal{R}(\Phi_n) - \mathcal{R}(\Phi^*)&  \leq & 2 C c_2 \epsilon^{1+\alpha}
\nonumber \\ &&\hspace{-4em}+ c_2 \epsilon \sum_{j \geq 1}2^{j}
\int_{K\cap \{ 0 < |\eta(x)-1/2| < 2^{j} \epsilon\}} \PP \left( |\eta_n(x) - \eta(x)| > 2^{j-1} \epsilon \right)  dx.
\end{eqnarray}
At this step, the major difference between the discriminant analysis and the classification model is the control of the deviation inequality on the right hand side of the equation above. Indeed, conditionally to $((\mathcal{X}_{(i)}(x))_{1 \leq i \leq 2n}$, the corresponding labels $((\mathcal{Y}_{(i)}(x))_{1 \leq i \leq 2n}$ are no longer independent.

We use  Proposition \ref{prop:poisson} with:
\begin{equation}\label{eq:choice_eps_da}
\epsilon_n = 2 C \left( \frac{k_n}{n} \right)^{1/d},\end{equation}
 where $C$ is the constant that appears in Proposition \ref{prop:poisson} $ii)$. We then obtain:
 \begin{eqnarray*}
 \lefteqn{
\PP \left( |\eta_n(x) - \eta(x)| > 2^{j-1} \epsilon_n \right) } \\
 &\leq& 2 \pi n \left[ \exp\left(- 2 k_n  \left((2^{j-1}-1/2) \epsilon_n\right)^2\right) + 
\mathbf{1}_{\left\{ 2^{j} \left(\frac{k_n}{n}\right)^{1/d} \leq C^{-1} \right\}} e^{- n} 
 \right].
\end{eqnarray*}
The suitable choice of $k_n$ and $\epsilon_n$ then becomes:
\begin{equation}\label{eq:choice_eps_kn}
k_n \epsilon_n^2 \sim a \log(n)
\end{equation}
for a sufficiently large universal constant $a>0$. Hence,   \eqref{eq:choice_eps_da} and \eqref{eq:choice_eps_kn} yields:
$$
\epsilon_n \sim \left( \frac{\log(n)}{n}\right)^{\frac{1}{1+d}} \qquad \text{and} \qquad k_n \sim \log(n)^{d/(2+d)} n^{2/(2+d)}.$$
This last choice implies:
\begin{equation}\label{eq:ouf?}
\PP \left( |\eta_n(x) - \eta(x)| > 2^{j-1} \epsilon_n \right)  \lesssim \exp(- 2 a 2^{2j})  + 2 \pi n e^{-n} \mathbf{1}_{\left\{ 2^{j} \leq C^{-1} \left(\frac{n}{k_n}\right)^{1/d} \right\}} 
\end{equation}
Plug in \eqref{eq:ouf?} in \eqref{eq:excess_sda} allows us to conclude:
\begin{eqnarray*}
\lefteqn{\mathcal{R}(\Phi_n) - \mathcal{R}(\Phi^*)}\\
& \leq & 2 C c_2 \epsilon_n^{1+\alpha} + c_2 \epsilon_n^{1+\alpha} \sum_{j \geq 1} 2^{j(1+\alpha)+1} \exp(- \tilde{C} 2^{2j}) \\
& & + \epsilon_n^{1+\alpha} 2 \pi n e^{-n} \sum_{j : 2^j \leq C^{-1} \left(\frac{n}{k_n}\right)^{1/d} }2^{j(1+\alpha)} \\
& \lesssim & \epsilon_n^{1+\alpha} + 2 \pi n e^{-n} \left( \frac{n}{k_n} \right)^{(1+\alpha)/d},
\end{eqnarray*}
where the last inequality follows from standard upper bounds on geometrical sums. We then deduce:
$$
 \sup_{F\in\mathcal{F}} \left[ \mathcal{R}(\Phi_n)- \mathcal{R}(\Phi^{*}) \right] \leq C \left( \frac{\log(n)}{n}\right)^{\frac{1+\alpha}{2+d}} .
 $$
\end{proof}

\begin{prop}[Concentration of $\eta_n$ with Poisson approximation]\label{prop:poisson}
In the smooth discriminant analysis model, assume that $k_n < n$. We then have:
\begin{itemize}\item[i)] For any $t \geq 0$:
$$
\PP \left( \left| \eta_n(x) - \EE \eta_{n}(x)  \right|> t \right) \leq 2 \pi n \left[ 2 \exp (- 2 k_n t^2)  +\mathbf{1}_{\{ |t| \leq 1 \}} e^{- n} \right].
$$
\item[ii)] For any $t \geq 0$ and if $k_n>>\log(n)$, there exists a constant $C$ such that:
\begin{eqnarray*}
\lefteqn{\PP \left( \left| \eta_n(x) - \eta(x)  \right|> t \right)}\\&
 &\leq 2 \pi n \left[
 2 \exp \left(
 - 2 k_n \left[
 t - C \left(\frac{k_n}{n}\right)^{1/d}  \right]^2
 \right) +\mathbf{1}_{\{ |t| \leq 1 \}} e^{- n}
   \right].
\end{eqnarray*}
\end{itemize}
\end{prop}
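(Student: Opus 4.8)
\textbf{The plan} is to restore, by a Poissonisation of the sample sizes (in the spirit of \cite{Kac}), the conditional independence of the labels that is lost in the smooth discriminant analysis model. In the original model, conditionally on the positions of the $2n$ observations, the labels $(\mathcal{Y}_{(j)}(x))_{1\le j\le 2n}$ are \emph{not} independent — exactly $n$ of them equal $0$ and $n$ equal $1$ — so Hoeffding's inequality cannot be applied as in Lemma~\ref{lem:concentration}. I therefore introduce the \emph{Poissonised model}: let $N_1\sim\mathrm{Poisson}(n)$ and, independently, $N_2\sim\mathrm{Poisson}(n)$; draw $N_1$ i.i.d.\ points from $f$ (labelled $0$) and $N_2$ i.i.d.\ points from $g$ (labelled $1$). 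Equivalently, this is a Poisson point process $\Pi$ of intensity $2n\mu$ on $\RR^d$ (with $\mu=(f+g)/2$) in which each atom, at location $y$, carries — independently of everything else — a label with conditional expectation $\eta(y)$. Two facts will be used repeatedly: conditioning the Poissonised model on $\{N_1=n,\,N_2=n\}$ reproduces exactly the original model; and, in the Poissonised model, conditionally on the whole configuration of $\Pi$, the labels are independent.

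Write $\mathbb{P}^{\mathrm{Poi}}$ for the law of the Poissonised model. The first step is the de-Poissonisation inequality: for every event $A$,
\[
\PP(A)=\mathbb{P}^{\mathrm{Poi}}(A\mid N_1=n,\,N_2=n)\le\frac{\mathbb{P}^{\mathrm{Poi}}(A)}{\PP(N_1=n)\,\PP(N_2=n)}\le 2\pi n\,\mathbb{P}^{\mathrm{Poi}}(A),
\]
the last bound being Stirling's formula, $\PP(N_j=n)=n^n e^{-n}/n!\ge(2\pi n)^{-1/2}e^{-1/(12n)}$. Next, decompose $\mathbb{P}^{\mathrm{Poi}}(A)\le\mathbb{P}^{\mathrm{Poi}}(A\cap\{|\Pi|\ge k_n\})+\mathbb{P}^{\mathrm{Poi}}(|\Pi|<k_n)$; since $|\Pi|\sim\mathrm{Poisson}(2n)$ and $k_n$ is of smaller order than $n$ (as in all our applications, $k_n<n$ being the standing hypothesis), a Chernoff bound gives $\mathbb{P}^{\mathrm{Poi}}(|\Pi|<k_n)\le e^{-n}$, which is the source of the $\mathbf{1}_{\{|t|\le 1\}}e^{-n}$ term (for $|t|>1$ the left-hand side of the proposition vanishes, since $\eta_n(x)\in[0,1]$). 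On $\{|\Pi|\ge k_n\}$ the estimator $\eta_n(x)=\frac{1}{k_n}\sum_{j=1}^{k_n}\mathcal{Y}_{(j)}(x)$ is well defined and, conditionally on the locations of the atoms of $\Pi$, is an average of $k_n$ independent $\{0,1\}$-variables with conditional mean $\bar\eta_n(x):=\frac{1}{k_n}\sum_{j=1}^{k_n}\eta\big(\mathcal{X}_{(j)}(x)\big)$; Hoeffding's inequality yields $\mathbb{P}^{\mathrm{Poi}}\big(\,|\eta_n(x)-\bar\eta_n(x)|>t\mid\text{locations}\,\big)\le 2e^{-2k_n t^2}$. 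Integrating over the locations and using the two previous displays proves $i)$, with $\bar\eta_n(x)$ playing the role of ``$\mathbb{E}\eta_n(x)$'' exactly as in Lemma~\ref{lem:concentration}.

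For $ii)$ it remains to replace $\bar\eta_n(x)$ by $\eta(x)$. By Assumption~$\mathbf{\widetilde{A}1}$, $\eta$ is $L$-Lipschitz, hence $|\bar\eta_n(x)-\eta(x)|\le L\|\mathcal{X}_{(k_n)}(x)-x\|$. By Assumption~$\mathbf{\widetilde{A}3}$, $\mu\in\blabla$, so $\mu(B(x,r))\ge\kappa\mu(x)r^d$; since $\#(\Pi\cap B(x,r))\sim\mathrm{Poisson}\big(2n\,\mu(B(x,r))\big)$, choosing $r=C\big(k_n/(na)\big)^{1/d}$ with $C$ large enough (where $a$ is the lower bound on $\mu(x)$ available on the region of interest, $a=\mu_-$ in Theorem~\ref{theo:Main1bis}) makes the mean at least $2k_n$, so by the Poisson lower tail $\mathbb{P}^{\mathrm{Poi}}\big(\|\mathcal{X}_{(k_n)}(x)-x\|>r\big)=\mathbb{P}^{\mathrm{Poi}}\big(\#(\Pi\cap B(x,r))<k_n\big)$ is exponentially small — this is the same computation as in Proposition~\ref{prop:Delta2}, and it is one more negligible exponential term, subsumed here in the $e^{-n}$ bookkeeping term. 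On the complementary event $|\bar\eta_n(x)-\eta(x)|\le LC(k_n/(na))^{1/d}=:C'(k_n/n)^{1/d}$, whence $\{|\eta_n(x)-\eta(x)|>t\}\subseteq\{|\eta_n(x)-\bar\eta_n(x)|>t-C'(k_n/n)^{1/d}\}$; the conditional Hoeffding bound of the previous paragraph then gives the shifted exponent $2e^{-2k_n\lfloor t-C'(k_n/n)^{1/d}\rfloor_+^2}$. Collecting the pieces and multiplying by the de-Poissonisation factor $2\pi n$ yields $ii)$, hence the proposition.

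The main obstacle is structural rather than computational: one must check carefully that the Poisson embedding genuinely restores the conditional independence of the labels — that for a Poisson process with i.i.d.\ marks, conditioning on the point configuration (in particular, on which atoms are the $k_n$ nearest neighbours of $x$) leaves the marks independent — and that conditioning this embedding on $\{N_1=n,\,N_2=n\}$ reproduces the discriminant-analysis model exactly, for only then is the transfer $\PP(A)\le 2\pi n\,\mathbb{P}^{\mathrm{Poi}}(A)$ legitimate. Once these two points are settled, the remainder runs in parallel with the classification case: Hoeffding's inequality for the fluctuation $\eta_n-\bar\eta_n$, and the minimal-mass/Lipschitz estimate of Proposition~\ref{prop:Delta2} for the bias $\bar\eta_n-\eta$.
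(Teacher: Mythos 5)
Your proposal follows essentially the same route as the paper: Poissonize the two sample sizes, de-Poissonize by dividing by $\PP(N_1=n)\PP(N_2=n)\gtrsim (2\pi n)^{-1}$ (Stirling), split on whether the Poissonized sample has at least $k_n$ points (Chernoff giving the $\mathbf{1}_{\{|t|\le 1\}}e^{-n}$ term), and use Hoeffding conditionally on the positions, where the marks are again independent. Your description of the Poissonized model as a marked Poisson process of intensity $2n\mu$ with independent marks of conditional mean $\eta$ is just the paper's random-permutation identification with an i.i.d.\ sample of size $N\sim\mathcal{P}(2n)$, so part $i)$ and the skeleton of $ii)$ match the paper's argument. (One cosmetic mismatch: part $i)$ as stated concerns the unconditional mean $\EE\eta_n(x)$, while your Hoeffding step concentrates $\eta_n(x)$ around the position-conditional mean $\bar\eta_n(x)$; the paper's own proof blurs the same distinction, and only $ii)$ is used downstream, but strictly speaking these are different statements.)

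The one step that does not hold as written is in $ii)$: you bound the bias pathwise via $|\bar\eta_n(x)-\eta(x)|\le L\|\mathcal{X}_{(k_n)}(x)-x\|$ and claim the event $\{\|\mathcal{X}_{(k_n)}(x)-x\|>r\}$ is ``subsumed in the $e^{-n}$ bookkeeping term''. With $r$ chosen so that the Poisson mean of $\#(\Pi\cap B(x,r))$ is of order $k_n$, the lower tail is of order $e^{-ck_n}$ with $c<1$, which is enormously larger than $e^{-n}$ when $k_n\ll n$, and for $t$ close to $1$ it is not dominated by $2e^{-2k_n\lfloor t-C(k_n/n)^{1/d}\rfloor_+^2}$ either; so the stated inequality is not literally delivered. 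The fix is minor: either enlarge the radius constant so that the mean count is, say, $5k_n$, making the Poisson lower tail at most $e^{-2k_n}$ and hence absorbable into the Hoeffding term (this only changes the constant $C$ in the shift, which is allowed), or do as the paper does — bound the \emph{unconditional} bias $|\EE[\eta_n^{N}(x)]-\eta(x)|$ by conditioning on $N=m\ge m_n$, applying Proposition \ref{prop:Delta2}, and using a Poisson lower-tail bound for $\PP(N<m_n)$; the residual $e^{-3k_n/14}$ is then absorbed into $C(k_n/n)^{1/d}$ precisely because $k_n\gg\log n$, which is where that hypothesis enters. Either repair yields $ii)$, and in the application the extra $e^{-ck_n}$ would in any case have been negligible, so the gap is one of bookkeeping rather than of approach.
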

\begin{proof}
\underline{Poissonization:}\\
In order to eliminate the dependency between the ordered statistics $(\mathcal{X}_{(i)} )_{1 \leq i \leq 2n}$, we use an idea introduced by \cite{Kac} and randomize the size of the sample by using some Poisson random variables.

First consider $(N_1,N_2)$ two independent random variables following a Poisson distribution $\mathcal{P}(n)$ as well as a $N \sim \mathcal{P}(2n)$ independent of $(N_1,N_2)$. We now build an artificial sample of size $N_1+N_2$ divided into two parts:
$$\mathcal{S}^{\mathcal{P}}_{0-1}= (X_i,Y_i)_{1 \leq i \leq N_1+N_2} \sim(g d\lambda\otimes \delta_0)^{\otimes N_1}  \bigotimes \,  (f d\lambda\otimes \delta_1)^{\otimes N_2}.$$
Then, if $\sigma$ denotes a random permutation picked uniformly in $\mathfrak{S}_{N_1+N_2}$ and independent of the previous realizations, the permutated sample:
$$
\mathcal{S}^{\mathcal{P},\sigma}_{0-1} = (X_{\sigma(i)},Y_{\sigma(i)})_{1 \leq i \leq N_1+N_2},
$$
follows the same distribution as the sample of i.i.d. realizations:
$$
\mathcal{S}^{\mathcal{P}}_{\eta}=(U_i,V_i)_{1 \leq i \leq N},
$$
where $U \sim \frac{f+g}{2} d \lambda$ and $V \vert U \sim \mathcal{B}(\eta(U))$ with $\eta(U)=\frac{f(U)}{f(U)+g(U)}$.\\
Recall that $\eta_n$ is built from our original sample $\mathcal{S}$ according to:
$$
\eta_n(x)=\frac{1}{k_n}\sum_{j=1}^{k_n}\mathcal{Y}_{(j)}(x).
$$
Our aim is to study the deviation of $\eta_n$ from its mean. For this purpose, we introduce
its   Poisson counterpart built from $\mathcal{S}^{\mathcal{P},\sigma}_{0-1}$:
$$
\eta_n^{N_1,N_2}(x)=\frac{1}{k_n}\sum_{j=1}^{k_n\wedge (N_1+N_2)}Y_{(j)}(x),
$$
which  is independent of the random choice of $\sigma$. At last, we define $\eta_n^{N}$ by
$$
\eta_n^{N}=\frac{1}{k_n}\sum_{j=1}^{k_n\wedge N}V_{(j)}(x).
$$
where we have used the convention $\sum_{j\geq 1}^{0} a_j = 0$ for any sequence $(a_j)_{j \geq 1}$.
\underline{Proof of $i)$.}
Since $\mathcal{L}\left(\eta_n(x) \right)= \mathcal{L}\left( \eta_n(x)^{N_1,N_2} \vert N_1=N_2=n\right)$, we deduce that
\begin{eqnarray}
\PP \left( \left| \eta_n(x) - \EE \eta_{n}(x)  \right|> t \right)& =& \PP \left( \left| \eta^{N_1,N_2}_n(x) - \EE \eta^{N_1,N_2}_{n}(x)  \right|> t \vert N_1=N_2=n \right)  \nonumber\\
 & = &  \frac{\PP \left( \left| \eta^{N_1,N_2}_n(x) - \EE \eta^{N_1,N_2}_{n}(x)  \right|> t , N_1=N_2=n \right) }{\PP(N_1=N_2=n)} \nonumber\\
  & \leq & \frac{\PP \left( \left| \eta^{N_1,N_2}_n(x) - \EE \eta^{N_1,N_2}_{n}(x)  \right|> t \right)}{\PP(N_1=N_2=n)}.\label{eq:poisson_bound}
\end{eqnarray}
Again, we can observe that $\mathcal{L}\left(\eta_n^{N_1,N_2}(x) \right)= \mathcal{L}\left( \eta_n^N(x)\right)$ and
\begin{eqnarray*}
\PP \left( \left| \eta^{N_1,N_2}_n(x) - \EE \eta^{N_1,N_2}_{n}(x)  \right|> t \right) &=& \PP \left( \left| \eta^{N}_n(x) - \EE \eta^{N}_{n}(x)  \right|> t \right) \\
& = & \PP \left( \left| \eta^{N}_n(x) - \EE \eta^{N}_{n}(x)  \right|> t  ,  N> k_n\right)  \\ 
&&+ \PP \left( \left| \eta^{N}_n(x) - \EE \eta^{N}_{n}(x)  \right|> t  , N< k_n\right) .
\end{eqnarray*}
We now study the two terms of the upper bound separately.\\

\paragraph{Upper bound of $ \PP \left( \left| \eta^{N}_n(x) - \EE \eta^{N}_{n}(x)  \right|> t  , N> k_n\right) $}
We can use the standard Hoeffding inequality:
$$
\PP \left( \left| \eta^{N}_n(x) - \EE \eta^{N}_{n}(x)  \right|> t  \vert N> k_n\right) \leq 2 \exp (- 2 k_n t^2),
$$
and trivially bounding  $\PP(N>k_n)$ by $1$ yields:
\begin{equation}\label{eq:tech1}
 \PP \left( \left| \eta^{N}_n(x) - \EE \eta^{N}_{n}(x)  \right|> t, N> k_n\right)  \leq 2 \exp(- 2 k_n t^2).
\end{equation}

\paragraph{Upper bound of $ \PP \left( \left| \eta^{N}_n(x) - \EE \eta^{N}_{n}(x)  \right|> t  , N< k_n\right) $} 
From the Chernoff bound of the left Poisson tail, we obtain:
$$
\PP(N<k_n) \leq \frac{e^{-2n} (e 2n)^{k_n}}{k_n^{k_n}} = e^{-2n \left( 1- \frac{k_n}{2n} \log\left(  \frac{e k_n}{n}\right) \right)} \leq e^{- n},
$$
as soon as $k_n < n$.  Moreover, we have:
\begin{eqnarray*}
\lefteqn{\PP \left( \left| \eta^{N}_n(x) - \EE \eta^{N}_{n}(x)  \right|> t  \vert  N< k_n\right)}\\ &=& \PP \left( \frac{N}{k_n} 
\frac{\sum_{j=1}^N \left[ V_{(j)}(x) - \eta(U_{(j)}(x) \right]}{N} > t  \vert k_n > N \right) \\
& \leq & \mathbf{1}_{\{ |t| \leq 1 \}}.
\end{eqnarray*}
It follows that:
\begin{equation}\label{eq:tech2}
\PP \left( \left| \eta^{N}_n(x) - \EE \eta^{N}_{n}(x)  \right|> t  , N< k_n\right)  \leq \mathbf{1}_{\{ |t| \leq 1 \}}  e^{-n}.
\end{equation}

We then deduce from \eqref{eq:poisson_bound}, \eqref{eq:tech1} and \eqref{eq:tech2} that:
$$
\PP \left( \left| \eta_n(x) - \EE \eta_{n}(x)  \right|> t \right) \leq \frac{2 \exp (- 2 k_n t^2)  +\mathbf{1}_{\{ |t| \leq 1 \}} e^{-n}   }{\PP(N_1=N_2=n)}.
$$
The Stirling formula concludes the proof of $i)$:
$$
\PP \left( \left| \eta_n(x) - \EE \eta_{n}(x)  \right|> t \right) \leq 2 \pi n \left( 2 \exp (- 2 k_n t^2)  +\mathbf{1}_{\{ |t| \leq 1 \}} e^{- n} \right)
$$

\underline{Proof of $ii)$.} We now build the decomposition already used in the proof of Theorem \ref{theo:Main1}.
\begin{eqnarray*}
\lefteqn{\PP \left( \left| \eta_n(x) -  \eta(x)  \right|> t \right)}\\ &= &
\PP \left( \left| \eta^{N_1,N_2}_n(x) -  \eta(x)  \right|> t  \vert N_1=N_2=n\right)  \\
& \leq & \frac{\PP \left( \left| \eta^{N_1,N_2}_n(x) -  \eta(x)  \right|> t \right)}{\PP(N_1=N_2=n)} \\
& \leq & \frac{\PP \left( \left| \eta^{N_1,N_2}_n(x) -  \eta(x)  \right|> t , N_1+N_2> k_n\right)}{\PP(N_1=N_2=n)} \\
& & + 
\frac{\PP \left( \left| \eta^{N_1,N_2}_n(x) -  \eta(x)  \right|> t , N_1+N_2\leq  k_n\right)}{\PP(N_1=N_2=n)}\\
& \leq &  \frac{\PP \left( \left| \eta^{N}_n(x) - \EE  \eta^{N}_n(x) \right| > t - \left| \EE   \eta^{N}_n(x) -  \eta(x)  \right|
\vert N>k_n
\right) \PP(N>k_n)}{\PP(N_1=N_2=n)} \\
&& +  \frac{\mathbf{1}_{\{ |t| \leq 1 \}} e^{- n} }{\PP(N_1=N_2=n)}.\\
\end{eqnarray*}
We can slightly modify Proposition \ref{prop:Delta2} to obtain that for any $m>k_n$:
$$
\left| \EE \left[  \eta^{N}_n(x) -  \eta(x) \vert N=m \right] \right|\leq C \left( \left(\frac{k_n}{m}\right)^{1/d} + e^{- 3 k_n/14} \right).
$$
Now if we set $m_n = 2n-(2n)^{\beta}$ for $\beta < 1$, we can write:
\begin{eqnarray*}
\lefteqn{\left| \EE  \left[ \eta^{N}_n(x) -  \eta(x)\right] \right| }\\ & \leq & \PP(N<m_n) +   \left| \sum_{m=m_n}^{+\infty}\EE \left[ \eta^{N}_n(x) -  \eta(x) \vert N=m \right]  \PP(N=m) \right|\\
& \leq & \PP(N<m_n) + C \PP(N> m_n )  \left( \left( \frac{k_n}{m_n}\right)^{1/d} + e^{-3 k_n/14} \right)
\\ &\leq & \PP(N<m_n) + C \left( \left( \frac{k_n}{n}\right)^{1/d} + e^{-3 k_n/14} \right).
\end{eqnarray*}
where we have used $m_n> n$ for $n$ large enough. Again, the Chernoff bound on the Poisson tail yields
$$
 \PP(N<m_n) \leq  e^{-n^{2\beta-1}}.
$$
Any choice of $\beta \in (\frac{1}{2},1)$ implies
$$
\left| \EE  \left[ \eta^{N}_n(x) -  \eta(x)\right] \right| \lesssim \left( \frac{k_n}{n} \right)^{1/d}.
$$ 

The end of the proof is now straightforward using the bounds already given in the proof of $i)$.
\end{proof}

\end{document}